\renewcommand{\setminus}{\smallsetminus}
\renewcommand{\t}{\vartheta}
\newcommand{\ifsodaelse}[2]{\ifthenelse{\isundefined{\SODAF}}{#2}{#1}}
\newcommand\remove[1]{}
\newcommand{\rnote}[1]{}
\newcommand{\jnote}[1]{}
\newcommand{\1}{\mathbf{1}}
\newcommand{\e}{\varepsilon}
\newcommand{\R}{\mathbb{R}}
\renewcommand{\P}{\mathbb{P}}
\newcommand{\E}{\mathbb{E}}
\newcommand{\N}{\mathbb{N}}
\renewcommand{\P}{\mathbb{P}}
\newcommand{\Lip}{\mathrm{Lip}}
\newcommand{\vol}{\mathrm{vol}}
\newcommand{\C}{\mathbb{C}}
\newcommand{\f}{\varphi}
\renewcommand{\d}{\delta}
\newcommand{\n}{\{1,\ldots,n\}}
\renewcommand{\subset}{\subseteq}
\newtheorem{theorem}{Theorem}
\newtheorem{lemma}[theorem]{Lemma}
\newtheorem{proposition}[theorem]{Proposition}
\newtheorem{corollary}[theorem]{Corollary}
\newtheorem{definition}[theorem]{Definition}
\newtheorem{remark}[theorem]{Remark}
\newtheorem{question}[theorem]{Question}
\newcommand{\eqdef}{\stackrel{\mathrm{def}}{=}}
\date{}
\renewcommand{\le}{\leqslant}
\renewcommand{\ge}{\geqslant}
\renewcommand{\leq}{\leqslant}
\newcommand\Z{{{\mathbb Z}}}
\renewcommand{\epsilon}{\varepsilon}
\theoremstyle{remark}
\newcommand{\F}{\mathcal{F}}
\newcommand{\bddlin}[0]{\mathscr{L}}
\newcommand{\supp}[0]{\operatorname{supp}}
\newcommand{\radem}[0]{\varepsilon}
\newcommand{\Norm}[2]{\|#1\|_{#2}}
\newcommand{\dd}{\mathrm{d}}
\begin{document}

\begin{frontmatter}[classification=text]

\title{Quantitative Affine Approximation\\ for UMD Targets} 

\author[tuomas]{Tuomas Hyt\"{o}nen\thanks{Supported by ERC grant ANPROB and by the Academy of Finland through projects 130166, 133264 and CoE in Analysis and Dynamics.}}
\author[sean]{Sean Li\thanks{Supported by NSF grants CCF-0832795 and DMS-1303910.}}
\author[naor]{Assaf Naor\thanks{Supported by NSF grant CCF-0832795, BSF grant
2010021, the Packard Foundation and the Simons Foundation.}}

\begin{abstract}
It is shown here that if $(Y,\|\cdot\|_Y)$ is a Banach space in which martingale differences are unconditional (a UMD Banach space) then there exists $c=c(Y)\in (0,\infty)$ with the following property. For every $n\in \N$ and $\e\in (0,1/2]$, if $(X,\|\cdot\|_X)$ is an $n$-dimensional normed space with unit ball $B_X$ and $f:B_X\to Y$ is a $1$-Lipschitz function  then there exists an affine mapping $\Lambda:X\to Y$ and a sub-ball $B^*=y+\rho B_X\subseteq B_X$ of radius $\rho\ge \exp(-(1/\e)^{cn})$ such that $\|f(x)-\Lambda(x)\|_Y\le \e \rho$ for all $x\in B^*$. This estimate on the  macroscopic scale of affine approximability of vector-valued Lipschitz functions is an asymptotic improvement (as $n\to \infty$) over the best previously known bound even when $X$ is $\R^n$ equipped with the Euclidean norm and $Y$ is a Hilbert space.
\end{abstract}
\end{frontmatter}

\section{Introduction}

In what follows,  the unit ball of a normed space $(X,\|\cdot\|_X)$
is denoted $B_X\eqdef\{x\in X:\ \|x\|_X< 1\}$. For $p\in [1,\infty]$ and
$n\in \N$, the space $\R^n$ equipped with the $\ell_p$ norm is
denoted as usual by $\ell_p^n$. Given two metric spaces $(U,d_U)$
and $(V,d_V)$, the Lipschitz constant of a mapping $f:U\to V$ is
denoted $\|f\|_{\Lip}$. Throughout this article, given $a,b\in (0,\infty)$, the notations
$a\lesssim b$ and $b\gtrsim a$ mean that $a\le cb$ for some
universal constant $c\in (0,\infty)$. The notation $a\asymp b$
stands for $(a\lesssim b) \wedge  (b\lesssim a)$.


For $n\in \N$ and $\e\in (0,1)$ let $r_n(\e)\ge 0$ be the supremum
over those $r\in [0,1)$ such that for every  Lipschitz function
$f:B_{\ell_2^n}\to \ell_2$ there exists a linear mapping
$T:\ell_2^n\to \ell_2$, a vector $a\in \ell_2$, and a sub-ball
$B^*=x_0+\rho B_{\ell_2^n}\subseteq B_{\ell_2^n}$ of radius $\rho\ge
r$, such that
\begin{equation}\label{UAAP def intro B^*} \forall\ x\in B^*,\qquad
\frac{\|f(x)-(a+Tx)\|_{\ell_2}}{\rho}\le \e \|f\|_{\Lip}.
\end{equation}

Thus, all the Hilbert space-valued  $1$-Lipschitz functions on the
Euclidean unit ball of $\R^n$ are guaranteed to be $\e$-close to
some affine function on some sub-ball of radius at least $r_n(\e)$,
where $\e$-closeness is measured relative to the scale of the
sub-ball. A lower bound on $r_n(\e)$ corresponds to a
differentiation-type theorem asserting that any such function is  macroscopically close to being affine rather than being
infinitesimally affine. Crucially, the macroscopic lower bound on
the scale of affine approximability is independent of the given
function.

The basic question in which we are interested is that of determining
the asymptotic behavior of $r_n(\e)$ as $n\to \infty$.
Qualitatively, we ask for an asymptotic understanding of those
Hilbert space-valued Lipschitz functions on $B_{\ell_2^n}$ that are
hardest to approximate by affine functions.

There is a big gap between the known upper and lower bounds on
$r_n(\e)$. We have $r_n(\e)\le
e^{-cn/\e^2}$, where $c\in (0,\infty)$ is a universal
constant; see Section~\ref{sec:upper bounds} below.
The only known lower bound~\cite{LN12} on $r_n(\e)$ is $r_n(\e)\ge
e^{-(n/\e)^{Cn}}$, where $C\in (0,\infty)$ is a universal constant.
For concreteness, by choosing, say, $\e=\frac14$ and denoting
$r_n=r_n(\frac14)$, the best known bounds on $r_n$ become
\begin{equation}\label{eq:r_n bounds}
e^{-n^{Kn}}\le r_n\le e^{-\kappa n},
\end{equation}
for some universal constants $\kappa,K\in (0,\infty)$. An illustrative special case of the main result that is obtained here (to be described in full below) is the following asymptotic
improvement over the leftmost inequality~\eqref{eq:r_n bounds}, which holds for every $n\in \N$ and for some universal constant $K\in (0,\infty)$.
\begin{equation}\label{eq:modest}
r_n\ge e^{-e^{Kn}}.
\end{equation}

\subsection{The modulus of $L_p$ affine approximabilty}

Despite the fact that the above question was phrased in the context
of Hilbert spaces, a setting which arguably best highlights its
fundamental nature, it is important to study it in the context of
mappings between more general normed spaces; it is in this setting,
for example, that it becomes relevant to Bourgain's discretization
problem~\cite{Bou87,GNS11}, as explained in~\cite[Section~1.1]{LN12} (see Remark~\ref{rem:bourgain} below).

\begin{definition}\label{def Lp modulus} Fix $n\in \N$ and let $(X=\R^n,\|\cdot\|_X)$ be an $n$-dimensional
Banach space. Also, let $(Y,\|\cdot\|_Y)$ be an inifinite
dimensional Banach space. For $p\in (0,\infty]$ and $\e\in (0,1)$
define $r_p^{X\to Y}(\e)$ to be the supremum over those
$r\ge 0$ with the following property. For every Lipschitz function
$f:B_X\to Y$ there exists $y\in X$ and $\rho\in [r,\infty)$ such
that $y+\rho B_X\subseteq B_X$, and there exists $a\in Y$ and a
linear operator $T:X\to Y$ whose operator norm satisfies
$\|T\|_{X\to Y}\le 3\|f\|_{\Lip}$, such that
\begin{equation}\label{eq:def rp}
\left(\frac{1}{\vol(x+\rho B_X)}\int_{y+\rho B_X}\left\|f(z)-(a +Tz)\right\|_Y^p\dd z\right)^{\frac{1}{p}}\le
\e\rho\|f\|_{\Lip}.
\end{equation}
We call $r_p^{X\to Y}(\cdot)$ the {modulus of $L_p$  affine
approximability} corresponding to $X$ and $Y$.
\end{definition}

Using
the notation of Definition~\ref{def Lp modulus}, the quantity $r_n(\e)$ that we defined above can be written
as $$r_n(\e)\eqdef r_\infty^{\ell_2^n\to \ell_2}(\e).$$ Indeed,  the $L_\infty$ requirement~\eqref{UAAP def intro B^*}
implies that $\|T\|_{\ell_2^n\to \ell_2}\le (1+2\e)\|f\|_{\Lip}\le
3\|f\|_{\Lip}$. For finite $p$, the $L_p$ bound~\eqref{eq:def rp}
does not automatically imply a bound on $\|T\|_{X\to Y}$, which is
the reason why we added the requirement $\|T\|_{X\to Y}\le
3\|f\|_{\Lip}$ as part of the definition of $r_p^{X\to Y}(\e)$.


The case $p=\infty$ of Definition~\ref{def Lp modulus}, for which we shall use
below the simpler notation $$r^{X\to Y}(\e)\eqdef r_\infty^{X\to Y}(\e),$$
was introduced by Bates, Johnson, Lindenstrauss, Preiss and
Schechtman~\cite{BaJoLi}, who proved that $r^{X\to Y}(\e)>0$ for all
$\e\in (0,1)$ if and only if $Y$ admits an equivalent uniformly
convex norm; see~\cite{BaJoLi} for beautiful geometric applications of this result. The best known lower bound on $r^{X\to Y}(\e)$ (in
terms of $n$, $\e$ and the modulus of uniform convexity of $Y$) was
obtained in~\cite{LN12}. This bound is
\begin{equation}\label{LN uniformly convex}
r^{X\to Y}(\e)\ge e^{-(n/\e)^{c(Y)n}},
\end{equation}
where $c(Y)\in (0,\infty)$ depends only the modulus of uniform
convexity of the target Banach space $(Y,\|\cdot\|_Y)$. An explicit
estimate on $c(Y)$ appears in~\cite[Thm.~1.1]{LN12}. Here we obtain
an estimate that is asymptotically better than~\eqref{LN uniformly
convex} as $n\to \infty$ provided that $Y$-valued martingale differences are
unconditional ($Y$ is a UMD Banach space). Note that all the classical
reflexive Banach spaces have this property, but one can
construct~\cite{Pis75} uniformly convex Banach spaces that are not UMD.

Formally, a Banach space $(Y,\|\cdot\|_Y)$ is said to be a UMD
Banach space if there exists $\beta\in (1,\infty)$ such that if
$\{M_j\}_{j=0}^\infty$ is a $Y$-valued square-integrable martingale
defined on some probability space $(\Omega,\P)$ then for every $n\in
\N$ and every choice of signs $\e_1,\ldots,\e_n\in \{-1,1\}$ we have
\begin{equation}\label{eq:def UMD}
\int_\Omega \Big\|M_0+\sum_{j=1}^n \e_n\left(M_j-M_{j-1}\right)\Big\|_Y^2\dd\P\le
\beta^2\int_\Omega \left\|M_n\right\|_Y^2\dd\P.
\end{equation}
If $Y$ is a UMD Banach space then the infimum over those $\beta\in
(1,\infty)$ for which~\eqref{eq:def UMD} holds true for every
square-integrable $Y$-valued martingale $\{M_j\}_{j=0}^\infty$  is denoted below by
$\beta(Y)$. Examples of UMD Banach spaces include all $L_p(\mu)$
spaces when $p\in (1,\infty)$, in which case $\beta(L_p(\mu))\asymp
p^2/(p-1)$. See~\cite{Burkholder} and the references therein for
more information on UMD spaces.

Theorem~\ref{thm:UAAP intro} below asserts an improved lower bound
on the modulus of affine approximability $r^{X\to Y}(\e)$, provided
that $Y$ is a UMD space.

\begin{theorem}\label{thm:UAAP intro}
There is a universal constant $c\in [1,\infty)$ such that for
every $n\in \N$ and $\beta\in [2,\infty)$, if $(X,\|\cdot\|_X)$ is an
$n$-dimensional normed space and $(Y,\|\cdot\|_Y)$ is a UMD Banach space
satisfying $\beta(Y)\le \beta$ then for every $\e\in (0,1/2)$ we have
$$
r^{X\to Y}(\e)\ge \exp\left(-\frac{(\beta n)^{c\beta}}{\e^{c(n+\beta)}}\right).
$$
\end{theorem}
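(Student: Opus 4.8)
The plan is to combine a quantitative differentiation argument with the martingale structure of the target, exploiting the UMD property to control how the "derivative" of $f$ varies across scales. The overall strategy is a multiscale iteration: one picks a nested sequence of balls $B_X \supseteq B_1 \supseteq B_2 \supseteq \cdots$, each obtained from its predecessor by dilating by a fixed factor $\lambda \in (0,1)$ and recentering, and on each ball $B_k$ one records the best affine approximation $\Lambda_k$ to $f$. The key quantity is the "affine deficit" $D_k = \rho_k^{-1}(\vol B_k)^{-1}\int_{B_k}\|f - \Lambda_k\|_Y$, normalized by the radius $\rho_k$. If $D_k$ fails to drop below $\e$ at every scale $k=1,\dots,N$, then at each such scale there must be a genuine "jump" in the linear part $T_k$ of $\Lambda_k$, and one wants to package these jumps as martingale differences $M_k - M_{k-1}$ of a suitable $Y$-valued (or $L_2(B_X;Y)$-valued) martingale adapted to the filtration generated by the recentering choices. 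The UMD inequality~\eqref{eq:def UMD}, applied with a clever choice of signs extracting the contributions where the deficit is large, then forces $\sum_k (\text{jump at scale }k)^2 \lesssim \beta^2 \|f\|_{\Lip}^2$. Since each unavoidable jump has size $\gtrsim \e$, this caps the number of bad scales at roughly $N \lesssim (\beta/\e)^{2}$ times a dimensional factor, after which a good ball must appear. Unwinding, the radius of that ball is $\lambda^N$, and making the dimensional bookkeeping explicit yields a bound of the form $\exp(-(\beta n)^{c\beta}\e^{-c(n+\beta)})$.

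More concretely, I would proceed as follows. First, reduce to $\|f\|_{\Lip}=1$ and to the case where $X$ is in a convenient position (e.g.\ $B_{\ell_\infty^n}\subseteq B_X \subseteq \sqrt n\, B_{\ell_\infty^n}$ by John's theorem, so that dyadic-type decompositions of $B_X$ make sense with distortion at most $\poly(n)$). Second, set up the discrete martingale: partition $B_X$ at scale $\lambda^k$ into $N(k) \approx (1/\lambda)^{kn}$ pieces, let $\mathcal F_k$ be the corresponding $\sigma$-algebra on $B_X$ with normalized Lebesgue measure, and define $M_k$ to be the conditional expectation of $f$ given $\mathcal F_k$ — or rather, to capture the *affine* (not merely averaged) behavior, define $M_k$ using the $\mathcal F_k$-measurable function that equals the best affine fit of $f$ on each scale-$\lambda^k$ piece, suitably rescaled. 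Third, the crucial estimate: show $\|M_k - M_{k-1}\|_{L_2(B_X;Y)}^2 \gtrsim \e^2$ whenever the deficit exceeds $\e$ on a definite fraction of scale-$\lambda^k$ cells, using that a $1$-Lipschitz function which is $\e$-far from affine on a ball but within $\e$ of affine on a concentric ball of half the radius must have genuinely different linear parts at the two scales (a "slope increment" lower bound). Fourth, apply the UMD inequality: for any signs $\e_k$,
$$
\Exp\Big\|M_0 + \sum_{k=1}^N \e_k (M_k - M_{k-1})\Big\|_Y^2 \le \beta^2 \Exp\|M_N\|_Y^2 \lesssim \beta^2,
$$
and combine this with the lower bounds on the $\|M_k-M_{k-1}\|^2$ (choosing signs, or averaging over random signs and using the resulting square-function estimate) to conclude $\e^2 \cdot \#\{\text{bad scales}\} \lesssim \beta^2 \cdot (\text{dimensional loss})$.

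The main obstacle — and the place where the UMD hypothesis does real work that uniform convexity alone (as in~\cite{LN12}) does not — is Step 3 together with the passage from a single good scale to the actual affine approximation statement. The subtlety is that the martingale differences $M_k - M_{k-1}$ measure how the *averaged* affine part changes, but closeness of averaged affine parts at two consecutive scales does not immediately give the pointwise $L_\infty$ (or even $L_p$) bound~\eqref{eq:def rp} on the smaller ball with the correct operator-norm control $\|T\|_{X\to Y}\le 3$; one must show that once the telescoped "energy" $\sum_k \|M_k - M_{k-1}\|^2$ is small over a block of consecutive scales, the affine parts are Cauchy along that block, the limiting linear operator has norm $\le 1 + o(1)$, and $f$ is genuinely $\e$-close to it on one of the balls in the block — this is where one pays an extra $\poly(n)$ or $e^{O(n)}$ factor in the number of required scales and hence the doubly-exponential loss. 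A secondary technical point is handling the recentering: each dilation must be centered at a point where the local behavior is "typical," which costs a union bound over the $(1/\lambda)^{kn}$ cells at scale $k$ and is the source of the $n$ in the exponent of $\e$ in the final bound. I expect the bookkeeping of these two losses — the martingale-energy-to-pointwise conversion and the recentering union bound — to be the bulk of the technical work, with the UMD inequality itself entering as a clean one-line input.
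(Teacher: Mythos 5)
Your proposal is the martingale route of Bates--Johnson--Lindenstrauss--Preiss--Schechtman and of~\cite{LN12}: build a multiscale (dyadic-type) martingale out of local affine fits of $f$, use the UMD inequality~\eqref{eq:def UMD} as a square-function bound to cap the number of bad scales, and then descend. This is not the paper's argument, and it contains a genuine gap at exactly the point you flag as "the bulk of the technical work." First, a smaller issue: the step $\sum_k\|M_k-M_{k-1}\|^2\lesssim\beta^2$ requires cotype $2$ of $Y$, which a general UMD space does not have; the correct statement for UMD $Y$ is an $\ell_q$-valued square-function bound with $q\asymp\beta$ (cf.\ Lemma~\ref{lem:cotype bound}), giving $N\lesssim(\beta/\e)^{c\beta}$ bad scales rather than $(\beta/\e)^2$. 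Second, and fatally: the conversion from "few bad scales" to "a single sub-ball on which $f$ is $\e$-close to one affine map with $\|T\|_{X\to Y}\le 3$" is precisely where the known implementations of this strategy lose a factor of $(n/\e)^{Cn}$ in the exponent --- one must control $f$ on an $\e$-net of a ball (of cardinality exponential in $n$), chain approximations across blocks of consecutive scales, and iterate the recentering, and these losses compound multiplicatively rather than costing a single $e^{O(n)}$ factor. The outcome of that bookkeeping is the bound~\eqref{LN uniformly convex} of~\cite{LN12}, i.e.\ $r^{X\to Y}(\e)\ge e^{-(n/\e)^{c(Y)n}}$, which is exactly the estimate Theorem~\ref{thm:UAAP intro} is designed to beat. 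You assert the loss is only $\poly(n)$ or $e^{O(n)}$ in the number of scales, but give no mechanism for this, and no such mechanism is known within the martingale framework.

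The paper avoids this entirely. It first reduces $L_\infty$-approximation to $L_p$-approximation with $p=\kappa\beta$ via Lemma~\ref{lem:simple relation}, at a cost of replacing $\e$ by $(\e/9)^{1+n/p}$ (this is the sole source of the $\e^{c(n+\beta)}$ in the exponent). It then proves a vector-valued Dorronsoro inequality (Theorem~\ref{w1p dorronsoro}): the integral over \emph{all} positions $x$ and \emph{all} scales $u$ of $\|f^x(u\cdot)-P^1_uf^x(u\cdot)\|_Y^{p}/u^{p+1}$ is at most $\poly(\beta,n)\cdot\vol(\supp f)\cdot\|f\|_{\Lip}^p$, with polynomial (not exponential) dependence on $n$. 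A pigeonhole over a dyadic range of scales then produces the good ball directly, with no net argument and no chaining. The UMD hypothesis enters not through~\eqref{eq:def UMD} applied to a martingale built from $f$, but through Bourgain's vector-valued Stein inequality, $\mathscr{R}$-boundedness of the heat semigroup, bounds on imaginary powers of the Laplacian, the Littlewood--Paley decomposition of Proposition~\ref{prop:littlewood paley}, and complex interpolation between fractional Riesz-potential norms. If you want to salvage your approach, you would need a new idea for the martingale-to-ball conversion that avoids the exponential-in-$n$ net; absent that, your sketch proves at best the already-known bound~\eqref{LN uniformly convex}.
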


\begin{remark}\label{rem:bourgain}
{\em By substituting Theorem~\ref{thm:UAAP intro}  into equation (12) of~\cite{LN12} one obtains a bound on Bourgain's discretization modulus in the special case of UMD targets that  improves over the bound that was deduced in~\cite[Section~1.1]{LN12} and matches Bourgain's original bound~\cite{Bou87}. Specifically, one obtains the refined estimate that appears in equation (2) of~\cite{GNS11}. This yields a new proof of the best known general bound  in Bourgain's discretization problem via an approach that is entirely different from Bourgain's method, albeit in the special (though still very general) case of UMD targets. We note that due to the recent progress in~\cite{GNS11} a stronger bound is available here when the target is $L_p$.}
\end{remark}

The main reason why we study here the modulus of $L_p$ affine
approximability $r_p^{X\to Y}(\e)$ is that it relates to $r^{X\to
Y}(\e)$ through Lemma~\ref{lem:simple relation} below, whose simple
proof appears in Section~\ref{sec:infinity lemma}. The advantage
of working with finite $p$ is that it allows us to use a variety of
analytic tools, such as vector-valued Littlewood--Paley theory and complex
 interpolation.

\begin{lemma}\label{lem:simple relation}
Fix $n\in \N$ and $p\in [1,\infty)$. Suppose that $(X,\|\cdot\|_X)$
and $(Y,\|\cdot\|_Y)$ are Banach spaces with $\dim(X)=n$. Then for
every $\e\in (0,1)$ we have
$$
r^{X\to Y}(\e)\ge r_p^{X\to Y}\left(\left(\frac{\e}{9}\right)^{1+\frac{n}{p}}\right).
$$
\end{lemma}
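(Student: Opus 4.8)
The plan is to show that the ball and affine map already produced by the definition of $r_p^{X\to Y}$ work verbatim for $r^{X\to Y}$, with no shrinking of the ball at all, so that the factor $9$ in the statement is absorbed entirely by upgrading an $L_p$ bound to an $L_\infty$ bound on the same ball. Concretely: normalise $\|f\|_{\Lip}=1$, set $\e_0\eqdef(\e/9)^{1+n/p}$, assume $r_p^{X\to Y}(\e_0)>0$ (otherwise there is nothing to prove), and fix $r\in(0,r_p^{X\to Y}(\e_0))$. By definition there are $y\in X$, $\rho\ge r$ with $B\eqdef y+\rho B_X\subseteq B_X$, a vector $a\in Y$, and a linear $T$ with $\|T\|_{X\to Y}\le 3$, such that $\int_B\|f(z)-a-Tz\|_Y^p\,\dd z\le(\e_0\rho)^p\vol(B)$. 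I would prove that then $\|f(z)-a-Tz\|_Y\le\e\rho$ for \emph{every} $z\in B$; the same triple $(B,a,T)$ then certifies that $r$ belongs to the supremum defining $r^{X\to Y}(\e)$, and letting $r\uparrow r_p^{X\to Y}(\e_0)$ gives the lemma.

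For this pointwise bound, write $g(z)\eqdef f(z)-a-Tz$, which is $L$-Lipschitz with $L\eqdef 1+\|T\|_{X\to Y}\le 4$. I would pick parameters $\theta\eqdef\tfrac{p\e}{n+p}\rho$ and $s\eqdef\tfrac{n\e}{2L(n+p)}\rho$ (positive, with $\theta+2Ls=\e\rho$), and set $E\eqdef\{z\in B:\|g(z)\|_Y>\theta\}$. Chebyshev's inequality and the $L_p$ bound give $\vol(E)\le\theta^{-p}\int_B\|g(z)\|_Y^p\,\dd z\le(\e_0\rho/\theta)^p\rho^n\vol(B_X)$, and the key estimate is that with these $\theta,s$ this is at most $\vol(sB_X)=s^n\vol(B_X)$; after substitution it reduces to the elementary inequality $\tfrac{8(n+p)}{n}\bigl(\tfrac{n+p}{9p}\bigr)^{p/n}\le 9$, which holds for all integers $n,p\ge 1$ and is an equality exactly at $n=8p$ (so the constant $9$ cannot be lowered by this argument). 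Since the first inequality above is strict whenever $\vol(E)>0$ (because $\|g\|_Y>\theta$ on $E$), and if $\vol(E)=0$ the claimed bound is immediate from continuity of $g$, I may assume $\vol(E)<\vol(sB_X)$. Hence every translate $z+sB_X$ contained in $B$ meets $B\setminus E$, i.e.\ contains a point $z'$ with $\|g(z')\|_Y\le\theta$. For $z\in B$ with $\dist_X(z,X\setminus B)\ge s$ this gives $\|g(z)\|_Y\le\|g(z')\|_Y+L\|z-z'\|_X<\theta+Ls$; for $z$ within distance $s$ of $X\setminus B$ I would apply the same to the radial projection $\tilde z\eqdef y+\tfrac{\rho-s}{\|z-y\|_X}(z-y)$, which satisfies $\dist_X(\tilde z,X\setminus B)=s$ and $\|z-\tilde z\|_X<s$, to get $\|g(z)\|_Y\le\|g(\tilde z)\|_Y+L\|z-\tilde z\|_X<(\theta+Ls)+Ls=\theta+2Ls=\e\rho$. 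Either way $\|g(z)\|_Y\le\e\rho$.

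The step I expect to be the main obstacle is the density argument. The $L_p$ hypothesis only says $g$ is small off a set of small measure, and the only device that converts this into a pointwise bound is the Lipschitz estimate for $g$; since $\|g\|_{\Lip}$ may be as large as $4$ while $\e\le\tfrac12$, there is no room to compare $g$ with its value at a single good point --- one must instead make $E$ so thin that \emph{every} point of $B$ lies within $s$ of a point where $\|g\|_Y\le\theta$, and arranging this forces precisely the calibration of $\theta$, $s$ and the exponent $1+n/p$ that yields the constant $9$. The radial-projection device for the boundary layer of $B$ is the other point needing care: it is what allows the final ball to be all of $B$, so that no multiplicative constant is lost in passing from $r_p^{X\to Y}$ to $r^{X\to Y}$; handling the boundary less carefully would only give $r^{X\to Y}(\e)\gtrsim r_p^{X\to Y}(\e_0)$.
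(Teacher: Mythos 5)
Your argument is correct and is, at bottom, the same as the paper's: both hinge on the fact that $\|f-\Lambda\|_{\Lip}\le 4$ converts a pointwise violation of size $\asymp\e\rho$ into a violation on a sub-ball of radius $\asymp\e\rho$, whose volume ratio $\asymp(\e/9)^n$ then overwhelms the $L_p$ budget $(\e/9)^{p+n}\rho^p$ --- the paper runs this as a direct contradiction from a single bad point (shifted toward the center $y$ to stay inside the ball, which plays the same role as your radial projection for the boundary layer), while you run the contrapositive via Chebyshev and a covering/density argument. Your calibration of $\theta$ and $s$, with near-equality at $n=8p$, is a nice refinement but yields the identical bound.
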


Due to Lemma~\ref{lem:simple relation}, Theorem~\ref{thm:UAAP intro}
is a consequence of Theorem~\ref{thm:Lp UAAP intro} below, which is
our main result. Its proof is based on a vector-valued variant of an
argument of Dorronsoro~\cite{Do}, combined with a wide variety of
additional analytic and geometric ingredients of independent interest.

\begin{theorem}\label{thm:Lp UAAP intro}
There exist universal constants $\kappa,C\in [1,\infty)$ such that
for every $\beta\in [2,\infty)$ and $n\in \N$, if $(X,\|\cdot\|_X)$ is an
$n$-dimensional normed space and $(Y,\|\cdot\|_Y)$ is a UMD Banach space
satisfying $\beta(Y)\le \beta$, then for every $\e\in (0,1)$ we have
$$
r_{\kappa \beta }^{X\to Y}(\e)\ge \exp\left(-\frac{(\beta n)^{C\kappa\beta}}{\e^{\kappa\beta}}\right).
$$
\end{theorem}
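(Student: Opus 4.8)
The plan is to prove Theorem~\ref{thm:Lp UAAP intro} by a vector-valued implementation of Dorronsoro's square-function estimate. First I would normalize so that $\|f\|_{\Lip}=1$ and embed the problem into $\R^n$, extending $f$ (via a Lipschitz extension) to a compactly supported function on all of $X$ at the cost of universal constants, so that Littlewood--Paley machinery applies. The heart of the matter is a \emph{Dorronsoro inequality} for UMD targets: if $\gamma_t f(x)$ denotes the best local affine $L_{p}$-approximation error of $f$ on the ball $x+tB_X$, normalized by $t$, then one should prove a bound of the shape
\begin{equation*}
\left\|\left(\int_0^\infty \gamma_t f(\cdot)^q\,\frac{\dd t}{t}\right)^{1/q}\right\|_{L_p(X,Y)}\lesssim_{n,\beta,p,q} \|\nabla f\|_{L_p(X,Y)},
\end{equation*}
for a suitable finite exponent $q=q(\beta)$ (one expects $q\asymp\beta$, which is why the theorem only controls $r_{\kappa\beta}^{X\to Y}$). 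The UMD hypothesis enters precisely here: the scalar proof of Dorronsoro's theorem runs through Littlewood--Paley theory, which for Banach-valued functions requires the target to have the UMD property (equivalently, boundedness of vector-valued Fourier/Littlewood--Paley projections), and the quantitative dependence on $\beta(Y)$ is what must be tracked carefully throughout.

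Granting the Dorronsoro inequality, the argument is a pigeonholing/averaging step. Since $f$ is $1$-Lipschitz, $\|\nabla f\|_{L_p(X,Y)}\lesssim \vol(B_X)^{1/p}$. Comparing with the left-hand side, which is an integral of $\gamma_t f(x)^q$ over roughly $N\asymp \log(1/r)$ dyadic scales $t\in[r,1]$ and over $x\in B_X$, one concludes that there must exist a scale $t\in[r,1]$ and a point $y$ with $y+tB_X\subseteq B_X$ at which $\gamma_t f(y)$ is small — quantitatively, of size at most $\e$ — provided $N$ is large enough, i.e. provided $r\ge \exp(-(\text{const})\cdot(\beta n)^{C\kappa\beta}/\e^{\kappa\beta})$. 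Here the various losses compound: the implicit constant in Dorronsoro's inequality depends on $n$ (through the geometry of the normed space $X$, the John position, covering numbers, and the norm of Littlewood--Paley projections in $n$ variables) like $(\beta n)^{O(\beta)}$, the exponent $q\asymp\beta$ forces a power $\e^{O(\beta)}$ after raising to the $1/q$ power, and the number of usable scales enters as $\log(1/r)$; balancing these yields the stated bound. The affine map produced this way automatically has operator norm at most $3$ because $f$ is $1$-Lipschitz and the approximation holds on a genuine ball.

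The main obstacle, by a wide margin, is establishing the quantitative vector-valued Dorronsoro inequality with explicit control on the dependence on $n$ and $\beta(Y)$. In the scalar case this rests on: (i) realizing $\gamma_t f$ as (comparable to) a Littlewood--Paley-type square function built from convolutions with smooth bump functions whose moments up to order one vanish, (ii) $L_p$ boundedness of the associated square function, and (iii) a duality/Calder\'on-reproducing-formula argument. Step (ii) is where UMD is indispensable and where the constant $\beta(Y)$ must be propagated: one needs vector-valued Littlewood--Paley estimates (or equivalently, $R$-boundedness of the relevant Fourier multiplier families, obtained via a Mihlin-type multiplier theorem on UMD spaces), and the best available such estimates contribute factors polynomial in $\beta(Y)$ per ``square'' and, after iterating across $n$ coordinate directions, factors of the form $\beta^{O(n)}$ or $(\beta n)^{O(\beta)}$ depending on how cleverly the argument is organized. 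A secondary difficulty is that $X$ is an arbitrary $n$-dimensional normed space, not Euclidean; one handles this by working in John's position and accepting a loss that is polynomial in $n$, but one must be careful that the anisotropy of $B_X$ does not destroy the moment-cancellation structure underlying Dorronsoro's estimate. Finally, passing from finite $p$ back to $p=\infty$ (which is what Theorem~\ref{thm:UAAP intro} needs) is routine given Lemma~\ref{lem:simple relation}, but it is the reason the finite-$p$ formulation with the extra $\|T\|_{X\to Y}\le 3\|f\|_{\Lip}$ constraint is the natural object to prove.
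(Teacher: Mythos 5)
Your proposal follows essentially the same route as the paper: a quantitative UMD-valued Dorronsoro estimate (the paper's Theorem~\ref{w1p dorronsoro}, proved with $p=q=\kappa\beta$ where $\kappa\beta$ is a cotype exponent of $Y$ supplied by Lemma~\ref{lem:cotype bound}), applied to a compactly supported Lipschitz extension of $f$ in John position, followed by pigeonholing over $\log(1/r)$ scales and a final averaging step to pass from Euclidean balls to balls of $\|\cdot\|_X$. The only cosmetic differences are that the paper takes the inner and outer exponents equal rather than a mixed $L_p(L_q)$ norm, and it gets the bound $\|T\|_{X\to Y}\le 3\|f\|_{\Lip}$ directly from Lemma~\ref{lem Yu lip} rather than from the $L_\infty$ closeness on the sub-ball.
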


\subsection{Previous work}\label{sec:previous} Over the past several decades, research on quantitative differentiation has proceeded roughly along two lines of inquiry, one of which arising from functional analysis and metric geometry and the other arising from rectifiability questions in harmonic analysis. The present work belongs to the former direction, but its main contribution is the use of methods from the latter direction in this new context while incorporating various Banach space theoretic tools.

Bates, Johnson, Lindenstrauss, Preiss and Schechtman studied~\cite{BaJoLi} quantitative differentiation in order to prove the rigidity of certain classes of Banach spaces under nonlinear quotients. The same notion was used in~\cite{LN12} for metric embeddings, namely as an alternative approach to Bourgain's discretization problem~\cite{Bou87} when the target Banach space is uniformly convex. The methods in this context fall under the category of (extensions of) ``approximate midpoint arguments," as initiated by Enflo~\cite{Ben85} to prove that $L_1$ is not uniformly homeomorphic to $\ell_1$, and further developed in~\cite{Bou87,EFW12,JLS96,Nao98} (see also Chapter 10 of~\cite{BL00}). As examples of the many related applications to quantitative embedding theory and rigidity questions, see also~\cite{Mat99,LR10,CKN11,EFW12,Che12,MN13,Li14,Li15}.

Parallel developments of a different nature arose in harmonic analysis, as part of the quest to develop a quantitative theory of rectifiability, with applications to singular integrals. Notable contributions along these lines include classical works of Stein (see the monograph~\cite{Ste70-singular}), through the works of Dorronsoro~\cite{Do}, Jones~\cite{Jon89,Jon90}, David and Semmes~\cite{DS91,DS93,DS00}, as well as the more recent work of Azzam and Schul~\cite{AS14}. The work of Dorronsoro~\cite{Do} directly influenced the present article, and we were also greatly inspired by the works of David and Semmes~\cite{DS91,DS93,DS00}. These works introduced and studied quantities that correspond to Definition~\ref{def Lp modulus} when $X$ is the Euclidean space $\R^n$ and $Y$ is the real line. Such methods also yield results for mappings from $\R^n$ to $\R^m$, but with statements that include implicit parameters that are allowed to depend on $m,n$. In~\cite{DS00} David and Semmes compare their work to that of Bates, Johnson, Lindenstrauss, Preiss and Schechtman~\cite{BaJoLi}, noting that the latter methods are different, and even yield results for infinite dimensional spaces.


Our contribution here follows the  harmonic-analytic   methodology, while overcoming several difficulties. Firstly, the literature on quantitative rectifiability ignores the dependence on the dimension $n$, while this dependence is the main topic of interest in the present context. In fact, a direct examination of the dependence on $n$ that is implicit in the above cited works reveals that it is insufficient for the purpose of obtaining improved bounds on $r^{X\to Y}(\e)$. Secondly, in our setting the  domain $X$ is a general $n$-dimensional normed space $X$ rather than a Euclidean space, and our arguments address this point. A final important difference is that we treat infinite dimensional Banach spaces $Y$ as targets. Overcoming this requires substantial effort, because the infinite dimensional arguments of Bates, Johnson, Lindenstrauss, Preiss and Schechtman~\cite{BaJoLi} do not seem to be applicable in our setting. The present work yields an infinite dimensional version of an inequality that Dorronsoro obtained~\cite{Do}  for real-valued functions. Such an infinite dimensional extension of Dorronsoro's work   is not routine, and in particular it does not hold true for arbitrary infinite dimensional Banach space targets $Y$. In fact, the geometry of $Y$ influences the structure of the inequality thus obtained, while stronger inequalities hold true for real-valued functions. The assumption that $Y$ is UMD is used several times in our argument through a rich UMD-valued Fourier-analytic toolkit that has been developed by many authors over the past four decades.


\subsection{Open questions}\label{sec:open} We list below some open questions that arise naturally from our work.

\begin{question}[Asymptotics of $r^{X\to Y}(\e)$]\label{Q:UAAP} {\em Obviously, the most tantalizing open question in the present context is to determine the rate at which $r_n$ tends to $0$ as $n\to \infty$, even roughly: say, is  this rate exponential, doubly exponential, or of some intermediate behavior? More importantly for potential applications, it remains open to obtain sharp bounds on the quantity $r^{X\to Y}(\e)$ when $X$ is a finite dimensional Banach space, $Y$ is a uniformly convex Banach space or belongs to some important class of Banach spaces (e.g. UMD spaces or uniformly convex lattices), and $\e\in (0,1)$.}
\end{question}

    \begin{question}[Infinite dimensional domains]{\em The question of characterizing those pairs of Banach space $X,Y$ for which $r^{X\to Y}(\e)>0$ for every $\e\in (0,1)$ was solved in~\cite{BaJoLi} also when $\dim(X)=\infty$ and $\dim(Y)<\infty$: this happens if and only if $X$  admits an equivalent uniformly smooth norm.
    It remains open to understand the asymptotic behavior of $r^{X\to Y}(\e)$ in the setting of uniformly smooth infinite dimensional domains and finite dimensional ranges. In particular, the rate at which $r^{\ell_2\to\ell_2^n}(\frac14)$ tends to $0$ as $n\to \infty$ is unknown. An explicit lower bound on  $r^{X\to Y}(\e)$ in terms of the modulus of uniform smoothness of $X$ and $\dim(Y)$ can be deduced from an examination of the proof in~\cite{BaJoLi}, but we believe that this bound is far from being optimal.}
    \end{question}

    \begin{question}[Uniformly convex targets]\label{Q:UC} {\em As stated earlier, by~\cite{BaJoLi} we know that if $\dim(X)=n<\infty$  and $\dim(Y)=\infty$ then $r^{X\to Y}(\e)>0$ for every $\e\in (0,1)$ if and only if $Y$ admits and equivalent uniformly convex norm.  However, the best known lower bounds on $r^{X\to Y}(\e)$ in this (maximal) generality remain those of~\cite{LN12}, and these bounds are weaker than Theorem~\ref{thm:UAAP intro} (as $n\to \infty$), which does not cover all uniformly convex ranges $Y$ because Pisier~\cite{Pis75} proved that there exist uniformly convex Banach spaces that are not UMD (even such uniformly convex Banach lattices exist, as shown by Bourgain in~\cite{Bou83}; see also the recent example by Qiu~\cite{Qiu}). It would be interesting to obtain an improved bound as in Theorem~\ref{thm:UAAP intro} under the weaker assumption that $Y$ is uniformly convex. Our proof of Theorem~\ref{thm:UAAP intro} definitely uses properties of $Y$ that imply the UMD property (e.g., we rely on the boundedness  of the $Y$-valued Hilbert transform, which was shown by Bourgain~\cite{Bou83} to imply that $Y$ is UMD). However, the conclusion of Theorem~\ref{thm:UAAP intro}, or even Theorem~\ref{thm:Lp UAAP intro}, may be valid when $Y$ is uniformly convex, and the same holds true for some of our other results, such as Theorem~\ref{w1p dorronsoro}  below. Certain aspects of vector-valued Littelwood--Paley theory are known to hold true for uniformly convex targets (see e.g.~\cite{MTX06}), so it would be interesting to investigate the extent to which the UMD property is needed for our results. If, on the other hand,
        Theorem~\ref{thm:Lp UAAP intro} or Theorem~\ref{w1p dorronsoro}  imply the UMD property then this would be a new characterization of UMD spaces.}
        \end{question}

\begin{question}[Asymptotics of $r^{X\to Y}_p(\e)$ for finite $p$] {\em  It would be interesting to understand the asymptotic behavior of the modulus of $L_p$ affine approximability $r_p^{X\to Y}(\e)$, even in the special case when $X$ and $Y$ are both Hilbert spaces, $p=2$ and, say, $\e=1/2$. A careful examination of the proof of Theorem~\ref{thm:Lp UAAP intro} in this Hilbertian setting  (in which case some of the steps that we perform below are not needed, and several estimates can be easily improved) reveals that for some $c\in (0,\infty)$,
\begin{equation}\label{eq:n4}
\forall\, n\in \N,\ \forall\,\e\in (0,1),\qquad  r_2^{\ell_2^n\to \ell_2}(\e)\gtrsim e^{-\frac{c(n\log n)^2}{\e^2}}.
\end{equation}
We do not know the extent to which~\eqref{eq:n4} is best possible; it seems plausible that with more work one could improve the dependence on $n$ in the exponent, but we do not presently have an upper bound that comes close to the lower bound in~\eqref{eq:n4}. Note that while the moduli  $r_p^{X\to Y}(\e)$ are interesting in their own right, we do not have geometric applications of them as in the case $p=\infty$. So, as a more amorphous research direction, it would be interesting to find geometric applications of bounds on $r_p^{X\to Y}(\e)$ (other than as a tool to bound $r_\infty^{X\to Y}(\e)$, which is the application that we present  here).}
\end{question}

\section{Geometric preliminaries}\label{sec:prelim}

Fix from now on an integer $n$ and an $n$-dimensional normed space
$(X,\|\cdot\|_X)$. We shall also fix a normed space
$(Y,\|\cdot\|_Y)$. In later sections we will need $Y$ to be a UMD
Banach space, but the statements of the present section hold true when
$Y$ is a general normed space.

By John's theorem~\cite{John} there exists a scalar product $\langle \cdot,\cdot\rangle$ on $X$ with respect to which we can identify $X$ with $\R^n$ and we have
\begin{equation}\label{eq:john posiiton}
\forall\, x\in \R^n,\qquad \|x\|_{2}\le \|x\|_X\le \sqrt{n}\cdot \|x\|_{2},
\end{equation}
where $\|\cdot\|_2=\|\cdot\|_{\ell_2^n}$. We shall also use the standard notation $B^n\eqdef B_{\ell_2^n}$ and $S^{n-1}\eqdef \partial B^n$. This Euclidean structure will be fixed from now on. Despite the fact that $X$ is now endowed with two metrics (those induced by $\|\cdot\|_X$ and $\|\cdot\|_2$), we shall tacitly maintain throughout the ensuing discussion the convention that whenever $\Omega\subset \R^n$ and  $f:\Omega\to Y$ then $\|f\|_{\Lip(\Omega)}$ denotes the Lipschitz constant of $f$ with respect to the metric induced by $\|\cdot\|_X$, i.e.,
$$
\|f\|_{\Lip(\Omega)}\eqdef \sup_{\substack{x,y\in \Omega\\ x\neq y}}\frac{\|f(x)-f(y)\|_Y}{\|x-y\|_X}.
$$
When $\Omega=\R^n$ we shall also use the shorter notation $\|f\|_{\Lip}=\|f\|_{\Lip(\R^n)}$.

We shall use standard notation for vector-valued $L_p$ spaces. Specifically, for every measurable subset $\Omega\subset \R^n$ of positive Lebesgue measure and $p\in [1,\infty]$, we let $L_p(\Omega,Y)$ denote the space of all measurable functions $f:\Omega\to Y$ such that
$$
\|f\|_{L_p(\Omega,Y)}\eqdef\left(\int_\Omega \|f(x)\|_Y^p\dd x\right)^{\frac{1}{p}}<\infty.
$$
Given $f:\Omega\to Y$ and $x\in \R^n$ we denote by $f^x:\Omega-x\to Y$ the translate of $f$ by $x$, i.e.,
\begin{equation}\label{eq:def translate}
\forall\, y\in \Omega-x,\qquad f^x(y)\eqdef f(x+y).
\end{equation}

For $u\in (0,\infty)$ and $f\in L_1(uB^n,Y)$ let $T_uf:\R^n\to Y$ be the linear operator defined by
\begin{equation}\label{eq:def T_u}
\forall\, w\in \R^n,\qquad T_uf(w)\eqdef \frac{n+2}{{V}_nu} \int_{B^n} \langle z,w\rangle f(uz)\dd z,
\end{equation}
where
\begin{equation*}\label{eq:def omega_n}
V_n\eqdef \vol(B^n)=\frac{\pi^{n/2}}{\Gamma\left(\frac{n}{2}+1\right)}.
\end{equation*}

The operator norm of $T_uf$ can be bounded in terms of the Lipschitz constant of $f$ as follows.
\begin{lemma}\label{lem Yu lip}
Fix $u\in (0,\infty)$ and a Lipschitz map $f:uB^n\to Y$. Then
$$
\|T_uf\|_{X\to Y}\le \|f\|_{\Lip(uB^n)}.
$$
\end{lemma}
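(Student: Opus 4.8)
The plan is to reduce immediately to real-valued targets by Hahn--Banach duality, and then prove the resulting scalar estimate by a single integration by parts against a carefully chosen vector field.

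\emph{Reduction to $Y=\R$.} First I would fix $w\in\R^n$ and $y^*\in Y^*$ with $\|y^*\|_{Y^*}\le1$. Since $y^*$ commutes with the Bochner integral defining $T_uf$, the scalar function $\phi\eqdef y^*\circ f:uB^n\to\R$ satisfies $\langle y^*,T_uf(w)\rangle=T_u\phi(w)$ and $\|\phi\|_{\Lip(uB^n)}\le\|f\|_{\Lip(uB^n)}$. Taking the supremum over such $y^*$ and using $\|T_uf(w)\|_Y=\sup_{\|y^*\|_{Y^*}\le1}\langle y^*,T_uf(w)\rangle$ reduces the lemma to the inequality $|T_u\phi(w)|\le\|\phi\|_{\Lip(uB^n)}\|w\|_X$ for every real-valued Lipschitz $\phi$ on $uB^n$. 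This step is exactly what makes differentiation available: a Lipschitz map into a general (possibly non-RNP) Banach space $Y$ need not be differentiable anywhere, whereas a real-valued Lipschitz function on a subset of $\R^n$ is differentiable a.e.\ by Rademacher's theorem, with $|\langle v,\nabla\phi(x)\rangle|\le\|v\|_X\|\phi\|_{\Lip(uB^n)}$ for every $v\in\R^n$ and a.e.\ $x$.

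\emph{The scalar estimate.} The key point is that $z\mapsto\langle z,w\rangle$ is the divergence of the vector field $V_w(z)\eqdef\tfrac12(\|z\|_2^2-1)w$, which is a polynomial field that \emph{vanishes on $S^{n-1}$}. I would then apply the divergence theorem to $V_w$ and the Lipschitz function $z\mapsto\phi(uz)$ on $B^n$ --- the boundary term drops out --- together with the chain rule $\nabla_z[\phi(uz)]=u(\nabla\phi)(uz)$, to get
\[
T_u\phi(w)=\frac{n+2}{V_nu}\int_{B^n}\big(\operatorname{div}V_w(z)\big)\,\phi(uz)\dd z
=\frac{n+2}{2V_n}\int_{B^n}\big(1-\|z\|_2^2\big)\,\langle w,(\nabla\phi)(uz)\rangle\dd z .
\]
Since $1-\|z\|_2^2\ge0$ on $B^n$, since $|\langle w,(\nabla\phi)(uz)\rangle|\le\|w\|_X\|\phi\|_{\Lip(uB^n)}$, and since $\int_{B^n}(1-\|z\|_2^2)\dd z=V_n-\sum_{i=1}^n\int_{B^n}z_i^2\dd z=V_n-\tfrac{n}{n+2}V_n=\tfrac{2}{n+2}V_n$, these combine to give precisely $|T_u\phi(w)|\le\|w\|_X\|\phi\|_{\Lip(uB^n)}$. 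Together with the first step this yields $\|T_uf\|_{X\to Y}\le\|f\|_{\Lip(uB^n)}$.

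The main obstacle is locating the right vector field. A naive bound that moves absolute values inside the defining integral of $T_uf$ loses a factor of order $\sqrt n$ (on $B^n$, $\|z\|_X$ can be as large as $\sqrt n\,\|z\|_2$), so cancellation must be exploited. The clean way to do this is to write $z\mapsto\langle z,w\rangle$ as the divergence of a smooth field that is tangential to $\partial B^n$ --- here it even vanishes there --- so that integration by parts leaves no boundary term; then the sign of $1-\|z\|_2^2$ and the exact value of its integral over $B^n$ make the normalization constant $\tfrac{n+2}{V_n}$ cancel perfectly. The remaining ingredients --- the duality reduction, Rademacher's theorem, and integration by parts for Lipschitz (Sobolev $W^{1,\infty}$) functions --- are standard.
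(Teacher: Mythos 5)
Your proof is correct, but it takes a genuinely different route from the paper's. The paper rescales to $u=1$, slices $B^n$ by the hyperplanes $H_t$ orthogonal to $w$ via Fubini, and exploits the oddness of $z\mapsto\langle z,w\rangle$ by pairing $t$ with $-t$, which replaces $f(z)$ in the integrand by the finite difference $f(z)-f(z-2tw/\|w\|_2)$; the Lipschitz bound and the identity $\int_{B^n}x_1^2\,\dd x=V_n/(n+2)$ then finish. That argument works directly with the vector-valued $f$ and needs no differentiability. You instead reduce to $Y=\R$ by Hahn--Banach, invoke Rademacher, and integrate by parts against the field $V_w(z)=\tfrac12(\|z\|_2^2-1)w$, whose vanishing on $S^{n-1}$ kills the boundary term; the normalization then cancels via $\int_{B^n}(1-\|z\|_2^2)\,\dd z=2V_n/(n+2)$, which is the same second-moment computation in disguise. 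Both proofs hinge on the same cancellation in $z\mapsto\langle z,w\rangle$ --- the paper realizes it through the $t\leftrightarrow -t$ symmetry, you through the divergence structure. What the paper's argument buys is self-containedness (no duality, no Rademacher, no $W^{1,\infty}$ integration by parts); what yours buys is an explicit representation of $T_u\phi(w)$ as a nonnegatively weighted average of directional derivatives $\langle w,\nabla\phi\rangle$, which makes the sharpness of the constant transparent. All the steps you sketch (commuting $y^*$ with the Bochner integral, the a.e.\ bound $|\langle v,\nabla\phi\rangle|\le\|v\|_X\|\phi\|_{\Lip}$, and integration by parts for Lipschitz functions against a field vanishing on the boundary) are indeed standard and check out.
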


\begin{proof}
By rescaling we may assume that $u=1$. Take $w\in \R^n\setminus \{0\}$. For every $t\in \R$ consider the following affine hyperplane.
$$
H_t\eqdef \left\{y\in \R^n:\ \left\langle y,\frac{w}{\|w\|_2}\right\rangle =t\right\}\subset \R^n.
$$
Then by the definition~\eqref{eq:def T_u} and Fubini's theorem we have
\begin{align*}
T_1f(w)&=\frac{(n+2)\|w\|_2}{V_n}\int_{-\infty}^\infty t\int_{H_t\cap B^n} f(z)\dd z \dd t\\ &=\frac{(n+2)\|w\|_2}{V_n}\int_{0}^\infty t\int_{H_t\cap B^n} \left(f(z)-f\left(z-\frac{2tw}{\|w\|_2}\right)\right)\dd z \dd t.
\end{align*}
Consequently,
\begin{align*}
\|T_1f(w)\|_Y&\le \frac{(n+2)\|w\|_2}{V_n}\int_{0}^\infty t\int_{H_t\cap B^n} \left\|f(z)-f\left(z-\frac{2tw}{\|w\|_2}\right)\right\|_Y\dd z\dd t\\
&\le \frac{2(n+2)\|f\|_{\Lip(B^n)}\|w\|_X}{V_n}\int_{0}^\infty t^2\vol_{n-1}\left(H_t\cap B^n\right)\dd t\\
&= \frac{(n+2)\|f\|_{\Lip(B^n)}\|w\|_X}{V_n}\int_{B^n} x_1^2\dd x\\
&= \|f\|_{\Lip(B^n)}\|w\|_X.\qedhere
\end{align*}
\end{proof}

We also record for future use the following simple estimate.

\begin{lemma}\label{lem:norm of T_u} For every $u\in (0,\infty)$ and $p\in [1,\infty]$ we have the following operator norm bound.
$$
\|T_u\|_{L_p(uB^n,Y)\to L_p(uB^n,Y)}\lesssim \min\left\{\sqrt{pn},n\right\}.
$$
\end{lemma}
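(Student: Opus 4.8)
By rescaling we may assume $u=1$, so we must bound the operator norm of the map $T_1:L_p(B^n,Y)\to L_p(B^n,Y)$ given by $T_1f(w)=\frac{n+2}{V_n}\int_{B^n}\langle z,w\rangle f(z)\,\dd z$, where we recall the convention that $\|f\|_{L_p(B^n,Y)}$ is an unnormalized integral but that $B^n$ has volume $V_n\asymp \ldots$ (this normalization will only affect constants). The plan is to peel off the two estimates separately. For the bound $\lesssim n$, valid for all $p\in[1,\infty]$: since $|\langle z,w\rangle|\le \|z\|_2\|w\|_2\le 1$ for $z,w\in B^n$, the kernel $K(w,z)=\frac{n+2}{V_n}\langle z,w\rangle$ satisfies $\int_{B^n}\|K(w,z)\|\,\dd z\le n+2$ uniformly in $w$ and $\int_{B^n}\|K(w,z)\|\,\dd w\le n+2$ uniformly in $z$; Schur's test (whose scalar proof goes through verbatim for Banach-space-valued kernels via Minkowski's integral inequality) then gives $\|T_1\|_{L_p\to L_p}\le n+2\lesssim n$ for every $p\in[1,\infty]$.

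For the bound $\lesssim \sqrt{pn}$, which is the substantive one when $p\ll n$, I would exploit the tensor structure $T_1f(w)=\sum_{i=1}^n w_i\, c_i(f)$, where $c_i(f)\eqdef\frac{n+2}{V_n}\int_{B^n} z_i f(z)\,\dd z\in Y$ is (up to the normalizing constant) the $i$-th moment of $f$ against the coordinate functions. First estimate a single coefficient: by Cauchy--Schwarz in the $z$-variable, $\|c_i(f)\|_Y\le \frac{n+2}{V_n}\big(\int_{B^n} z_i^2\,\dd z\big)^{1/2}\big(\int_{B^n}\|f(z)\|_Y^2\dd z\big)^{1/2}$, and $\int_{B^n}z_i^2\,\dd z=\frac{V_n}{n+2}$, so $\sum_{i=1}^n\|c_i(f)\|_Y^2\le (n+2)\,\|f\|_{L_2(B^n,Y)}^2$ after summing over $i$ (the factor $n+2$ coming from $\sum_i z_i^2=\|z\|_2^2\le 1$). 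Then bound the $L_p(B^n)$ norm of the linear function $w\mapsto \sum_i w_i c_i(f)$: writing $v_i\eqdef\|c_i(f)\|_Y$, one has $\big\|\sum_i w_i c_i(f)\big\|_Y\le \sum_i |w_i| v_i=\langle |w|,v\rangle$ where $|w|=(|w_1|,\dots,|w_n|)$, and $\big(\int_{B^n}\langle|w|,v\rangle^p\,\dd w\big)^{1/p}\lesssim \sqrt{p/n}\,\|v\|_2\,\sqrt{V_n}$ by a standard computation: the marginal of a uniformly random point of $B^n$ in any direction is essentially Gaussian with standard deviation $\asymp 1/\sqrt n$, so its $p$-th moment is $\lesssim \sqrt{p/n}$. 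Combining, $\|T_1f\|_{L_p(B^n,Y)}\lesssim (n+2)\cdot\sqrt{p/n}\cdot\|f\|_{L_2(B^n,Y)}\lesssim \sqrt{pn}\,\|f\|_{L_2(B^n,Y)}$.

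This almost finishes it, except the right-hand side is an $L_2$ norm of $f$, not an $L_p$ norm, so the estimate above really shows $\|T_1\|_{L_2(B^n,Y)\to L_p(B^n,Y)}\lesssim\sqrt{pn}$ (on a probability space $L_2\le L_p$ for $p\ge 2$, but on $B^n$ with our normalization there is a volume factor $V_n\asymp n^{-n/2}$ floating around, and for $p<2$ the inclusion goes the wrong way). The clean fix, which I expect to be the one real point requiring care, is to observe that $T_1$ factors as $T_1=A\circ P$, where $P:L_p(B^n,Y)\to Y^n$ sends $f\mapsto (c_1(f),\dots,c_n(f))$ and $A:Y^n\to L_p(B^n,Y)$ sends $(y_1,\dots,y_n)\mapsto\big(w\mapsto\sum_i w_i y_i\big)$, equip $Y^n$ with the norm $\|(y_i)\|=\big(\sum_i\|y_i\|_Y^2\big)^{1/2}$, and bound the two factors. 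The bound $\|A\|\lesssim\sqrt{p/n}\cdot\sqrt{V_n}$ is exactly the linear-function-on-$B^n$ computation above. For $\|P\|$ one needs $\sum_i\|c_i(f)\|_Y^2\lesssim (n/V_n)\,\|f\|_{L_p(B^n,Y)}^2$ — for $p\ge 2$ this follows from the $L_2$ bound by Hölder (losing the right volume factor $V_n^{1-2/p}$, which still gives something of size $\lesssim n$), and for $p\in[1,2)$ one instead uses $|z_i|\le 1$ to get $\|c_i(f)\|_Y\le\frac{n+2}{V_n}\int_{B^n}\|f\|_Y\le\frac{n+2}{V_n}V_n^{1-1/p}\|f\|_{L_p(B^n,Y)}$, whence $\sum_i\|c_i(f)\|_Y^2\lesssim n\cdot\frac{n^2}{V_n^{2/p}}\|f\|_{L_p}^2$, and one checks that multiplying by $\|A\|^2\asymp \frac{p}{n}V_n$ still yields $\|T_1\|^2\lesssim pn$ (the powers of $V_n\asymp n^{-n/2}$ and the extra powers of $n$ combining favorably because $V_n^{-1/p}$ is subexponential in $n$ for fixed $p$ — here is where one must be slightly attentive to not lose a spurious $n^{O(n)}$). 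Taking the minimum of the two estimates $\lesssim n$ and $\lesssim\sqrt{pn}$ gives the claim. I expect essentially no difficulty in the vector-valued aspect — $Y$ enters only through the triangle inequality and Minkowski's integral inequality — and the only thing to watch is the bookkeeping of the $B^n$-normalization constants so that the final bound is genuinely $\min\{\sqrt{pn},n\}$ with a universal constant.
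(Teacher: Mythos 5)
Your first half (the $\lesssim n$ bound via Schur's test) is fine, but the $\lesssim\sqrt{pn}$ half has a genuine gap, and it is exactly at the step you flagged as ``a standard computation.'' The estimate
$$\Big(\int_{B^n}\langle |w|,v\rangle^p\dd w\Big)^{\frac1p}\lesssim \sqrt{\tfrac{p}{n}}\,\|v\|_2\,V_n^{\frac1p}$$
is false: taking $v=(1,\dots,1)/\sqrt n$, so that $\|v\|_2=1$, one has $\langle|w|,v\rangle=\|w\|_1/\sqrt n$, whose average over $B^n$ is $\asymp 1$ rather than $\asymp 1/\sqrt n$. The marginal heuristic you invoke applies to the \emph{signed} linear functional $w\mapsto\langle w,v\rangle$, whose smallness comes from cancellation/rotation invariance; once you apply the triangle inequality coordinate-by-coordinate and replace $\|\sum_i w_i c_i(f)\|_Y$ by $\sum_i|w_i|\,\|c_i(f)\|_Y$, that cancellation is gone and the $1/\sqrt n$ gain disappears. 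The problem is not just with this particular bound but with the whole factorization $T_1=A\circ P$ through $\ell_2^n(Y)$: for a general Banach space $Y$ the operator $A:(y_i)\mapsto(w\mapsto\sum_i w_iy_i)$ does \emph{not} have norm $\lesssim\sqrt{p/n}\,V_n^{1/p}$ from $\ell_2^n(Y)$ to $L_p(B^n,Y)$ (take $Y=\ell_1^n$ and $y_i=e_i/\sqrt n$); such a bound is essentially a type-$2$ statement about $Y$. So this route can only recover the $\lesssim n$ bound, not $\lesssim\sqrt{pn}$. (There are also smaller bookkeeping errors, e.g.\ $\sum_i\|c_i(f)\|_Y^2\le(n+2)\|f\|_{L_2}^2$ is off by a factor involving $n/V_n$, but these are secondary.)

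The fix, which is what the paper does, is to keep the inner product intact: apply Minkowski's integral inequality in the $z$-variable, so that
$$\|T_1f\|_{L_p(B^n,Y)}\le\frac{n+2}{V_n}\int_{B^n}\big\|w\mapsto\langle z,w\rangle f(z)\big\|_{L_p(B^n,Y)}\dd z,$$
and observe that for each fixed $z$ the integrand is a rank-one function whose norm factors exactly as $\|f(z)\|_Y\cdot\|z\|_2\cdot(\int_{B^n}|w_1|^p\dd w)^{1/p}$ by rotation invariance in $w$. The one-dimensional marginal bound $(\frac{1}{V_n}\int_{B^n}|w_1|^p\dd w)^{1/p}\asymp\min\{\sqrt{p/n},1\}$ then applies legitimately, and H\"older's inequality in $z$ finishes the proof with both bounds $\sqrt{pn}$ and $n$ at once.
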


\begin{proof}
By rescaling we may assume that $u=1$. If $f\in L_p(B^n,Y)$ then
\begin{align}
\label{eq:use triangle in LpY}\|T_1f\|_{L_p(B^n,Y)}&\le \frac{n+2}{V_n}\int_{B^n}\left\|w\mapsto \langle z,w\rangle f(z)\right\|_{L_p(B^n,Y)}\dd z\\\label{eq:use rotation invariance}&=
 \frac{n+2}{V_n}\bigg(\int_{B^n}|w_1|^pdw\bigg)^{\frac{1}{p}}\int_{B^n}\|z\|_2\cdot\|f(z)\|_Y\dd z\\\label{eq:compute p moment coordinate on sphere}
 &\lesssim \min\left\{\sqrt{pn},n\right\}\|f\|_{L_p(B^n,Y)},
\end{align}
where in~\eqref{eq:use triangle in LpY} we used the definition~\eqref{eq:def T_u} and the triangle inequality in $L_p(B^n,Y)$, in~\eqref{eq:use rotation invariance} we used rotation invariance, and~\eqref{eq:compute p moment coordinate on sphere} follows from H\"older's inequality combined with the following fact, which can be verified by a direct computation and is also, say, a special case of inequality (4) in~\cite{BGMN05}.
\begin{equation*}
\bigg(\frac{1}{V_n}\int_{B^n}|w_1|^p\dd w\bigg)^{\frac{1}{p}}\asymp \min\left\{\sqrt{\frac{p}{n}},1\right\}.\qedhere
\end{equation*}
\end{proof}

For $u\in (0,\infty)$ and $f\in L_1(uB^n,Y)$ define
\begin{equation}\label{eq:def P0}
P^0_uf\eqdef \frac{1}{V_n}\int_{B^n} f(uz)\dd z\in Y.
\end{equation}
Thus, for every $\Omega\subset \R^n$ and $f\in L_1(\Omega,Y)$, if $x\in \R^n$ and $u\in (0,\infty)$ satisfy $x+uB^n\subset \Omega$ then the vector $P^0_uf^x\in Y$ is the mean of $f$ over $x+uB^n$. The following simple estimate will be used later.

\begin{lemma}\label{lem:n+q}
Fix $p\in [1,\infty)$, $q\in (0,\infty)$ and $x\in \R^n$. Every measurable $f:\R^n\to Y$ satisfies
$$
\int_0^\infty \int_{B^n}\frac{\left\|f^x(uy)-P^0_uf^x\right\|_Y^p}{u^{q+1}}\dd y \dd u\le \frac{2^p}{n+q}\int_{\R^n} \frac{\|f^x(y)-f(x)\|_Y^p}{\|y\|_2^{n+q}}\dd y.
$$
\end{lemma}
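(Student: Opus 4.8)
The plan is to bound the left-hand side by inserting $f(x)$ as an intermediate point and controlling each of the two resulting terms separately. First I would translate so that $x=0$ and write, for fixed $u>0$ and $y\in B^n$,
\[
\left\|f^x(uy)-P^0_uf^x\right\|_Y\le \left\|f^x(uy)-f(x)\right\|_Y+\left\|P^0_uf^x-f(x)\right\|_Y,
\]
and then use the convexity bound $\|P^0_uf^x-f(x)\|_Y=\big\|\frac1{V_n}\int_{B^n}(f^x(uz)-f(x))\dd z\big\|_Y\le \frac1{V_n}\int_{B^n}\|f^x(uz)-f(x)\|_Y\dd z$. Applying the elementary inequality $(a+b)^p\le 2^{p-1}(a^p+b^p)$ and then Jensen's inequality to the average over $z\in B^n$, both contributions are dominated by $2^{p-1}$ times an integral of $\|f^x(uz)-f(x)\|_Y^p$ over $z\in B^n$ (with a harmless doubling once the two terms are combined, giving the final constant $2^p$). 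After this reduction, the quantity to estimate is
\[
\int_0^\infty\int_{B^n}\frac{\|f^x(uz)-f(x)\|_Y^p}{u^{q+1}}\dd z\dd u .
\]

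The core step is then a change of variables $y=uz$ turning this double integral into the stated weighted integral over $\R^n$. Concretely, for fixed $u$ the inner integral over $z\in B^n$ becomes, after $z\mapsto y/u$ (Jacobian $u^{-n}$), an integral of $\|f^x(y)-f(x)\|_Y^p u^{-n}$ over $y\in uB^n$; swapping the order of integration via Fubini, a given $y\in\R^n\setminus\{0\}$ is counted precisely for those $u>\|y\|_2$, so one is left with $\int_{\|y\|_2}^\infty u^{-(n+q+1)}\dd u=\frac{1}{n+q}\|y\|_2^{-(n+q)}$. This produces exactly the factor $\frac{1}{n+q}$ and the weight $\|y\|_2^{-(n+q)}$ on the right-hand side; combined with the $2^p$ from the first reduction, this gives the claimed bound. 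All integrands are nonnegative, so Tonelli's theorem justifies every interchange regardless of whether the right-hand side is finite.

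I do not anticipate a genuine obstacle here; the only point requiring a little care is the bookkeeping of constants when one handles the two terms $\|f^x(uy)-f(x)\|_Y$ and $\|P^0_uf^x-f(x)\|_Y$ — the naive split costs a factor $2^{p-1}$ from $(a+b)^p$, and applying Jensen to the averaged term and then recombining with the first term must be organized so the total constant does not exceed $2^p$ (this works because the first term, viewed as a point mass at $z=y$, and the averaged second term can both be absorbed into a single average over $B^n$ after symmetrization, or more crudely because $2^{p-1}\cdot 2=2^p$). The change of variables and Fubini step is completely routine once the problem has been put in this form, and the Euclidean norm $\|\cdot\|_2$ rather than $\|\cdot\|_X$ appears naturally because $uB^n$ is a Euclidean ball.
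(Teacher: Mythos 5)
Your proposal is correct and follows essentially the same route as the paper: the same reduction bounding $\|f^x(uy)-P^0_uf^x\|_Y$ via the intermediate point $v=f(x)$ with total constant $2^p$ (the paper packages this as the triangle inequality in $L_p(B^n,Y)$ plus convexity, which is equivalent to your $(a+b)^p\le 2^{p-1}(a^p+b^p)$ plus Jensen bookkeeping), followed by the change of variables and Tonelli computation yielding $\int_{\|y\|_2}^\infty u^{-(n+q+1)}\dd u=\frac{1}{n+q}\|y\|_2^{-(n+q)}$, which the paper carries out equivalently in polar coordinates.
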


\begin{proof}
Recalling~\eqref{eq:def P0}, it follows from the triangle inequality in $L_p(B^n,Y)$ and convexity that
$$
\forall\, v\in Y,\qquad \int_{B^n}\left\|f^x(uy)-P^0_uf^x\right\|_Y^p\dd y\le 2^p \int_{B^n}\left\|f^x(uy)-v\right\|_Y^p\dd y.
$$
By choosing here $v=f(x)$, we see that for every $u\in (0,\infty)$ we have
$$
\int_{B^n}\left\|f^x(uy)-P^0_uf^x\right\|_Y^p\dd y\le 2^p \int_{B^n}\left\|f^x(uy)-f(x)\right\|_Y^p\dd y.
$$
Hence, denoting the surface measure on $S^{n-1}$ by $\sigma$, by integrating in polar coordinates we get
\begin{align*}
\int_0^\infty \int_{B^n}\frac{\left\|f^x(uy)-P^0_uf^x\right\|_Y^p}{u^{q+1}}\dd y \dd u&\le 2^p\int_0^\infty\int_{B^n}\frac{\left\|f^x(uy)-f(x)\right\|_Y^p}{u^{q+1}}\dd y\dd u\\
&=2^p\int_0^\infty\int_0^1\int_{S^{n-1}}r^{n-1}\frac{\left\|f^x(urw)-f(x)\right\|_Y^p}{u^{q+1}}\dd\sigma(w)\dd r\dd u\\
&=2^p\int_0^1 \int_0^\infty \int_{S^{n-1}} r^{n-1}\frac{\left\|f^x(s w)-f(x)\right\|_Y^p}{(s/r)^{q+1}} \dd\sigma(w)\frac{\dd s}{r}\dd r\\
&=\frac{2^p}{n+q}\int_{\R^n} \frac{\|f^x(y)-f(x)\|_Y^p}{\|y\|_2^{n+q}}\dd y.\tag*{\qedhere}
\end{align*}
\end{proof}

Define an affine mapping $P^1_uf: \R^n\to Y$ by
\begin{equation}\label{eq:def P1}
P^1_uf\eqdef P^0_uf+T_uf.
\end{equation}
By a simple change of variable, for every $y\in \R^n$ we have
\begin{equation}\label{explicit legendre}
\left(P^1_{u} f^x\right)^{-x}(y)= P^0_{u}f^x+\sum_{j=1}^n\frac{y_j-x_j}{\int_{x+uB}(w_j-x_j)^2\dd w}\int_{x+uB}(z_j-x_j)f(z)\dd z.
\end{equation}
Consequently, if $f$ were a real-valued function then $(P^1_{u} f^x)^{-x}$ would be the orthogonal projection in $L_2(x+uB^n)$ of $f$ onto the subspace consisting of all the affine mappings.  Lemma~\ref{lem:projection closest in L_p} below shows that for every $p\in [1,\infty]$, if $f\in L_p(x+uB^n,Y)$ then the distance between $f$ and $(P^1_{u} f^x)^{-x}$ in  $L_p(x+uB^n,Y)$ is controlled by the distance of $f$ to the subspace of $L_p(x+uB^n,Y)$ consisting of all the affine mappings. Such a statement was previously proved in~\cite{Do}, but since  the dependence on $n$ and $p$ is important in the present context, and is only implicit in~\cite{Do}, we include its proof.

\begin{lemma}\label{lem:projection closest in L_p}
Fix $p\in [1,\infty]$, $u\in (0,\infty)$ and $x\in \R^n$. Suppose that $f\in L_p(x+uB^n,Y)$ and that $\Lambda:\R^n\to Y$ is affine. Then
$$
\left\|f^x-P^1_{u} f^x\right\|_{L_p(uB^n,Y)}=\left\|f-\left(P^1_{u} f^x\right)^{-x}\right\|_{L_p(x+uB^n,Y)}\lesssim \min\left\{\sqrt{pn},n\right\} \cdot \left\|f-\Lambda\right\|_{L_p(x+uB^n,Y)}.
$$
\end{lemma}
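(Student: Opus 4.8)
The plan is to exploit the fact that $P^1_u f^x$ is a projection-type operator onto affine functions, in the sense that it fixes affine maps and is bounded on $L_p(uB^n,Y)$ with the stated norm. First I would reduce to $u=1$ and $x=0$ by rescaling and translating (using the translate notation $f^x$), so that the claim becomes $\|f-P^1_1 f\|_{L_p(B^n,Y)}\lesssim \min\{\sqrt{pn},n\}\|f-\Lambda\|_{L_p(B^n,Y)}$ for every affine $\Lambda:\R^n\to Y$.

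The key observation is that $P^1_1$ reproduces affine functions: if $\Lambda(y)=a+Ty$ is affine, then $P^0_1\Lambda = a$ (since $\int_{B^n}\langle z,w\rangle\,dz=0$ contributes nothing to the mean and the mean of $a+Ty$ over the symmetric ball $B^n$ is $a$), and $T_1\Lambda = T$ by the explicit formula for $T_u$ combined with the second-moment identity $\frac{n+2}{V_n}\int_{B^n}\langle z,w\rangle\langle z,v\rangle\,dz = \langle w,v\rangle$ (this is the normalization built into $T_u$, and is visible from the change of variables yielding \eqref{explicit legendre}). Hence $P^1_1\Lambda = \Lambda$ for every affine $\Lambda$. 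Therefore, for any affine $\Lambda$,
\begin{equation*}
f - P^1_1 f = (f-\Lambda) - P^1_1(f-\Lambda),
\end{equation*}
so it suffices to bound $\|g - P^1_1 g\|_{L_p(B^n,Y)} \lesssim \min\{\sqrt{pn},n\}\|g\|_{L_p(B^n,Y)}$ for arbitrary $g=f-\Lambda\in L_p(B^n,Y)$.

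For that, I would use the triangle inequality together with the two operator-norm estimates already available: $\|P^0_1 g\|_Y \le \|g\|_{L_1(B^n,Y)}/V_n \lesssim \|g\|_{L_p(B^n,Y)}$ by Jensen/Hölder (and the constant function $P^0_1 g$ has $L_p(B^n,Y)$ norm $V_n^{1/p}\|P^0_1 g\|_Y \lesssim \|g\|_{L_p(B^n,Y)}$), while $\|T_1 g\|_{L_p(B^n,Y)} \lesssim \min\{\sqrt{pn},n\}\|g\|_{L_p(B^n,Y)}$ is precisely Lemma~\ref{lem:norm of T_u} with $u=1$. Combining, $\|P^1_1 g\|_{L_p(B^n,Y)} = \|P^0_1 g + T_1 g\|_{L_p(B^n,Y)} \lesssim \min\{\sqrt{pn},n\}\|g\|_{L_p(B^n,Y)}$, and hence $\|g-P^1_1 g\|_{L_p(B^n,Y)}\lesssim \min\{\sqrt{pn},n\}\|g\|_{L_p(B^n,Y)}$ as well, since $1 \lesssim \min\{\sqrt{pn},n\}$. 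The case $p=\infty$ is handled identically, replacing averages by suprema. Substituting $g=f-\Lambda$ and using $P^1_1(f-\Lambda)=P^1_1 f - \Lambda$ finishes the proof, and undoing the rescaling gives the stated inequality.

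I do not expect a genuine obstacle here; the only point requiring a small computation is verifying $P^1_1\Lambda=\Lambda$, i.e. that the normalization constant $(n+2)/V_n$ in the definition of $T_u$ is exactly the one that makes $T_1$ reproduce linear maps — this is the same second-moment computation $\int_{B^n}x_1^2\,dx = V_n/(n+2)$ that already appeared at the end of the proof of Lemma~\ref{lem Yu lip}. Everything else is bookkeeping: the reduction to $u=1,x=0$, the decomposition via reproduction of affine functions, and assembling the two bounds $\|P^0_1\|\lesssim 1$ and $\|T_1\|\lesssim\min\{\sqrt{pn},n\}$ from Lemma~\ref{lem:norm of T_u}.
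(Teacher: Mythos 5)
Your proposal is correct and follows essentially the same route as the paper: reduce to $x=0$, $u=1$, use $P^1_1\Lambda=\Lambda$ to write $f-P^1_1f=(f-\Lambda)-P^1_1(f-\Lambda)$, bound $P^0_1$ by its averaging property, and bound $T_1$ by Lemma~\ref{lem:norm of T_u}. Your explicit verification of $P^1_1\Lambda=\Lambda$ via the second-moment identity $\int_{B^n}x_1^2\,\dd x=V_n/(n+2)$ is a detail the paper leaves implicit, but otherwise the two arguments coincide.
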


\begin{proof}
By  translation and a rescaling we may assume that $x=0$ and $u=1$. Since $P_1^1\Lambda=\Lambda$,
\begin{eqnarray*}
\left\|f-P^1_{1} f\right\|_{L_p(B^n,Y)}&\le& \left\|f-\Lambda\right\|_{L_p(B^n,Y)}+\left\|P^1_{1} (f-\Lambda)\right\|_{L_p(B^n,Y)}\\ &\stackrel{\eqref{eq:def P0}}{\le}&
\left\|f-\Lambda\right\|_{L_p(B^n,Y)}+\left\|P^0_{1} (f-\Lambda)\right\|_{L_p(B^n,Y)}+\left\|T_{1} (f-\Lambda)\right\|_{L_p(B^n,Y)}.
\end{eqnarray*}
It remains to note that $\|P^0_{1} (f-\Lambda)\|_{L_p(B^n,Y)}\le \|f-\Lambda\|_{L_p(B^n,Y)}$ since $P_1^0$ is an averaging operator, and to apply Lemma~\ref{lem:norm of T_u}.
\end{proof}

We end this section by recording for ease of later reference two consequences of Lemma~\ref{lem:projection closest in L_p}.
\begin{corollary}\label{lem:fractional sobolev appears}
Fix $p\in [1,\infty)$, $q\in (0,\infty)$ and $x\in \R^n$. Every measurable $f:\R^n\to Y$ satisfies
\begin{equation*}\label{eq:for use later bring in sobolev}
\left(\int_0^\infty\int_{B^n}\frac{\|f^x(uy)-P^1_uf^x(uy)\|_Y^p}{u^{q+1}}\dd y\dd u\right)^{\frac{1}{p}}\lesssim \frac{\min\left\{\sqrt{pn},n\right\}}{(q+n)^{\frac{1}{p}}}\bigg(\int_{\R^n}\frac{\|f^x(y)-f(x)\|_Y^p}{\|y\|_2^{n+q}}\dd y\bigg)^{\frac{1}{p}}.
\end{equation*}
\end{corollary}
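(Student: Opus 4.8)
The plan is to combine the two ingredients that have just been developed: Lemma~\ref{lem:projection closest in L_p}, which controls the $L_p(x+uB^n,Y)$-distance from $f$ to its affine projection $(P^1_uf^x)^{-x}$ by the distance from $f$ to \emph{any} affine map $\Lambda$, and Lemma~\ref{lem:n+q}, which integrates the deviation of $f^x$ from its \emph{mean} $P^0_uf^x$ over all scales $u>0$ against the weight $u^{-q-1}$ and bounds it by a fractional-Sobolev-type seminorm of $f^x$ centered at $x$. So first I would fix $u\in(0,\infty)$ and apply Lemma~\ref{lem:projection closest in L_p} with the specific choice $\Lambda(z)\eqdef f(x)$, the constant map; after the change of variables $z\mapsto x+uy$ this gives
$$
\int_{B^n}\|f^x(uy)-P^1_uf^x(uy)\|_Y^p\dd y\lesssim \min\{pn,n^2\}^{p/2}\int_{B^n}\|f^x(uy)-f(x)\|_Y^p\dd y,
$$
where I have absorbed the constant $f(x)$ into the affine map and kept track that the implied constant from that lemma contributes the factor $\min\{\sqrt{pn},n\}$ raised to the $p$-th power.

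Next I would divide both sides by $u^{q+1}$ and integrate over $u\in(0,\infty)$, which is legitimate by the non-negativity of the integrands (Tonelli). The left-hand side is exactly the $p$-th power of the quantity we want to estimate. The right-hand side is $\min\{\sqrt{pn},n\}^p$ times $\int_0^\infty\int_{B^n}u^{-q-1}\|f^x(uy)-f(x)\|_Y^p\dd y\dd u$, which is \emph{not} quite the left side of Lemma~\ref{lem:n+q} (that lemma has $P^0_uf^x$ in place of $f(x)$), but the proof of Lemma~\ref{lem:n+q} shows precisely that this latter integral, after passing to polar coordinates and carrying out the one-dimensional scaling integral, equals $\frac{1}{n+q}\int_{\R^n}\|f^x(y)-f(x)\|_Y^p\|y\|_2^{-n-q}\dd y$ (the factor $2^p$ in Lemma~\ref{lem:n+q} is exactly the cost of the triangle inequality replacing $P^0_uf^x$ by $f(x)$, which we no longer need since we already made that replacement). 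So I would either invoke the computation inside the proof of Lemma~\ref{lem:n+q} directly, or—cleaner for the write-up—simply bound $\|f^x(uy)-f(x)\|_Y^p\lesssim\|f^x(uy)-P^0_uf^x\|_Y^p+\|P^0_uf^x-f(x)\|_Y^p$, note the second term is dominated by an average of the first over $B^n$, and thereby reduce to Lemma~\ref{lem:n+q} verbatim, at the cost of another universal constant.

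Taking $p$-th roots at the end converts $\min\{pn,n^2\}^{p/2}$ back to $\min\{\sqrt{pn},n\}$ and $\frac{1}{n+q}$ back to $(n+q)^{-1/p}$, yielding exactly the claimed inequality
$$
\left(\int_0^\infty\int_{B^n}\frac{\|f^x(uy)-P^1_uf^x(uy)\|_Y^p}{u^{q+1}}\dd y\dd u\right)^{1/p}\lesssim\frac{\min\{\sqrt{pn},n\}}{(q+n)^{1/p}}\left(\int_{\R^n}\frac{\|f^x(y)-f(x)\|_Y^p}{\|y\|_2^{n+q}}\dd y\right)^{1/p}.
$$
I do not anticipate a genuine obstacle here; this is a bookkeeping corollary. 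The only point requiring a little care is the interface between the two lemmas: Lemma~\ref{lem:projection closest in L_p} naturally produces a comparison to $\|f-\Lambda\|_{L_p}$ for a \emph{single} scale $u$, so one must be sure that choosing $\Lambda$ to be the constant $f(x)$ (the same constant for every $u$) is permissible and that the resulting scale-by-scale inequality can then be integrated in $u$—which it can, since the inequality holds pointwise in $u$ before integration. A secondary subtlety is not to lose the sharp power of $(q+n)$: the $(q+n)^{-1/p}$ gain comes \emph{entirely} from the scaling integral in Lemma~\ref{lem:n+q}, so the argument must route the $u$-integration through that lemma rather than bounding things off-handedly.
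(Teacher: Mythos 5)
Your main route is correct and is essentially the paper's own proof: the paper applies Lemma~\ref{lem:projection closest in L_p} with the constant affine map $\Lambda=P^0_uf^x$ and then invokes Lemma~\ref{lem:n+q} as stated, whereas you take $\Lambda\equiv f(x)$ and then use the exact polar-coordinate identity $\int_0^\infty\int_{B^n}u^{-q-1}\|f^x(uy)-f(x)\|_Y^p\dd y\dd u=(n+q)^{-1}\int_{\R^n}\|f^x(y)-f(x)\|_Y^p\|y\|_2^{-n-q}\dd y$ from inside that lemma's proof; the difference is immaterial (it only changes a factor of $2$ after taking $p$-th roots). One caveat: drop the ``cleaner alternative'' in the middle. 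The claim that $\|P^0_uf^x-f(x)\|_Y^p$ is dominated by an average of $\|f^x(uy)-P^0_uf^x\|_Y^p$ over $B^n$ is false (Jensen only dominates it by an average of $\|f^x(uy)-f(x)\|_Y^p$, which is the quantity you are trying to bound, so that route is circular); the primary argument does not need it.
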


\begin{proof}
Fix $u\in (0,\infty)$. Lemma~\ref{lem:projection closest in L_p} implies that every affine mapping $\Lambda:\R^n\to Y$ satisfies
\begin{equation}\label{eq:move to Bn}
\left(\int_{B^n} \left\|f^x(uy)-P_u^1f^x(uy)\right\|_Y^p\dd y\right)^{\frac1{p}}\lesssim \min\left\{\sqrt{pn},n\right\}\left(\int_{B^n} \left\|f^x(uy)-\Lambda(uy)\right\|_Y^p\dd y\right)^{\frac1{p}}.
\end{equation}
An application of~\eqref{eq:move to Bn} when $\Lambda$ is the constant $P^0_uf^x$ shows that for every $u\in (0,\infty)$ we have
$$
\left(\int_{B^n}\|f^x(uy)-P^1_uf^x(uy)\|_Y^p\dd y\right)^{\frac{1}{p}}\lesssim \min\left\{\sqrt{pn},n\right\}\left(\int_{B^n}\left\|f^x(uy)-P^0_uf^x\right\|_Y^p\dd y \right)^{\frac{1}{p}}.
$$
This implies the desired estimate due to Lemma~\ref{lem:n+q}.\end{proof}

\begin{corollary}\label{lem:fractional sobolev appears2}
Fix $p\in [1,\infty)$, $q\in (p,\infty)$ and $x\in \R^n$. Suppose that $f:\R^n\to Y$ is smooth. Then
\begin{multline}\label{eq:fj desired}
\left(\int_0^\infty\int_{B^n}\frac{\|f^x(uy)-P^1_uf^x(uy)\|_Y^p}{u^{q+1}}\dd y\dd u\right)^{\frac{1}{p}}\\
\lesssim \frac{p\min\left\{\sqrt{pn},n\right\}}{q(n+q-p)^{\frac{1}{p}}}\sum_{j=1}^n\bigg(\int_{\R^n}\left\|\frac{\partial f}{\partial x_j}(x+y)-\frac{\partial f}{\partial x_j}(x)\right\|_Y^p\frac{\dd y}{\|y\|_2^{n+q-p}}\bigg)^{\frac{1}{p}}.
\end{multline}
\end{corollary}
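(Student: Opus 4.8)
The plan is to deduce this from Corollary~\ref{lem:fractional sobolev appears} by exploiting the fact that the operator $P^1_u$ from~\eqref{eq:def P1} reproduces affine mappings: for every affine $\Lambda:\R^n\to Y$ and every $u\in(0,\infty)$ one has $P^1_u\Lambda=\Lambda$. This is immediate from~\eqref{eq:def P0},~\eqref{eq:def T_u},~\eqref{eq:def P1} using the central symmetry of $B^n$ together with $\int_{B^n}z_1^2\,\dd z=V_n/(n+2)$, and it is the rescaled form of the identity $P^1_1\Lambda=\Lambda$ already used in the proof of Lemma~\ref{lem:projection closest in L_p}. Accordingly, first I would introduce the remainder of the first-order Taylor expansion of $f$ at $x$, namely the smooth function $h:\R^n\to Y$ given by $h(z)\eqdef f(z)-f(x)-\sum_{j=1}^n(z_j-x_j)\frac{\partial f}{\partial x_j}(x)$, so that $h(x)=0$ and $\frac{\partial h}{\partial x_j}(z)=\frac{\partial f}{\partial x_j}(z)-\frac{\partial f}{\partial x_j}(x)$ for all $z$. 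Writing $\Lambda_x(y)\eqdef f(x)+\sum_{j=1}^n y_j\frac{\partial f}{\partial x_j}(x)$, we have $f^x=h^x+\Lambda_x$, and since $P^1_u$ is linear and $\Lambda_x$ is affine this gives $P^1_uf^x=P^1_uh^x+\Lambda_x$; consequently $f^x(uy)-P^1_uf^x(uy)=h^x(uy)-P^1_uh^x(uy)$ for all $u\in(0,\infty)$ and $y\in B^n$.

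Applying Corollary~\ref{lem:fractional sobolev appears} to $h$ (its hypotheses hold since $q\in(p,\infty)\subset(0,\infty)$ and $h$ is measurable) and recalling $h(x)=0$, the last identity yields
\[
\left(\int_0^\infty\int_{B^n}\frac{\|f^x(uy)-P^1_uf^x(uy)\|_Y^p}{u^{q+1}}\,\dd y\,\dd u\right)^{\frac1p}\lesssim \frac{\min\{\sqrt{pn},n\}}{(n+q)^{\frac1p}}\left(\int_{\R^n}\frac{\|h(x+y)\|_Y^p}{\|y\|_2^{n+q}}\,\dd y\right)^{\frac1p},
\]
so it remains to bound the rightmost quantity by the claimed sum over $j$. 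Here one writes, via the fundamental theorem of calculus along the smooth path $t\mapsto f(x+ty)$,
\[
h(x+y)=f(x+y)-f(x)-\sum_{j=1}^n y_j\frac{\partial f}{\partial x_j}(x)=\int_0^1\sum_{j=1}^n y_j\left(\frac{\partial f}{\partial x_j}(x+ty)-\frac{\partial f}{\partial x_j}(x)\right)\dd t.
\]
Taking $Y$-norms, bounding $|y_j|\le\|y\|_2$ to extract one factor of $\|y\|_2$, dividing by $\|y\|_2^{(n+q)/p}$ (so that the surviving power of $\|y\|_2$ becomes $\|y\|_2^{-(n+q-p)/p}$), taking the $L_p(\R^n,\dd y)$ norm, using the triangle inequality in $L_p$ to split the sum over $j$, and finally Minkowski's integral inequality to pull the $L_p(\dd y)$ norm inside $\int_0^1\dd t$, one arrives at
\[
\left(\int_{\R^n}\frac{\|h(x+y)\|_Y^p}{\|y\|_2^{n+q}}\,\dd y\right)^{\frac1p}\le\sum_{j=1}^n\int_0^1\left(\int_{\R^n}\frac{\left\|\frac{\partial f}{\partial x_j}(x+ty)-\frac{\partial f}{\partial x_j}(x)\right\|_Y^p}{\|y\|_2^{n+q-p}}\,\dd y\right)^{\frac1p}\dd t.
\]
For each fixed $t\in(0,1]$ the substitution $z=ty$ rewrites the inner integral as $t^{q-p}$ times the corresponding integral in $z$, and $\int_0^1 t^{(q-p)/p}\,\dd t=p/q$ since $q>0$; combining the three displays and using $q>p$ to replace $(n+q)^{-1/p}$ by the larger $(n+q-p)^{-1/p}$ produces the asserted bound (the argument in fact yields the slightly stronger version in which $(n+q-p)^{-1/p}$ is replaced by $(n+q)^{-1/p}$).

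I expect no serious obstacle here: the one genuinely delicate point is the application of Minkowski's integral inequality to interchange the auxiliary $t$-integration with the $Y$-valued $L_p(\dd y)$ norm, after which everything reduces to tracking scaling exponents. The conceptual content is merely the observation that $P^1_u$ annihilates affine functions, which is what lets one replace the zeroth-order oscillation $f^x(y)-f(x)$ appearing in Corollary~\ref{lem:fractional sobolev appears} by the Taylor remainder of $f$ at $x$ and hence, through the fundamental theorem of calculus, by first-order oscillations of the partial derivatives of $f$.
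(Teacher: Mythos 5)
Your proof is correct, and it takes a genuinely different route from the paper's. The paper does not subtract the Taylor polynomial of $f$ at $x$; instead, for each scale $u$ it applies the near-minimality inequality~\eqref{eq:move to Bn} with a $u$-dependent affine competitor $\Lambda_{x,u}(y)=f(x)+\sum_j y_j a_j(u,x)$ whose coefficients $a_j(u,x)=\int_0^1 P^0_{su}f_j^x\,\dd s$ are averaged ball-means of the partial derivatives; the resulting error is $\sum_j\int_0^1 uy_j\big(f_j^x(suy)-P^0_{su}f_j^x\big)\dd s$, which is controlled by Hardy's inequality (producing the factor $p/q$) and then by Lemma~\ref{lem:n+q} applied to each $f_j$ with $q$ replaced by $q-p$. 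You instead exploit the reproducing property $P^1_u\Lambda=\Lambda$ (which is indeed the rescaled form of the identity $P^1_1\Lambda=\Lambda$ used in Lemma~\ref{lem:projection closest in L_p}) to reduce everything to Corollary~\ref{lem:fractional sobolev appears} applied as a black box to the Taylor remainder $h$, and you replace Hardy's inequality by Minkowski's integral inequality plus the scaling computation $\int_0^1 t^{(q-p)/p}\,\dd t=p/q$. All the individual steps check out: the identity $f^x-P^1_uf^x=h^x-P^1_uh^x$ is valid by linearity, the change of variables $z=ty$ produces exactly $t^{q-p}$, and your final constant carries $(n+q)^{-1/p}$ in place of the stated $(n+q-p)^{-1/p}$, which is smaller since $q>p$, so your bound is marginally stronger than~\eqref{eq:fj desired}. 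What your approach buys is modularity (Corollary~\ref{lem:fractional sobolev appears} is reused rather than re-derived) and a slightly cleaner constant; what the paper's approach buys is that it avoids introducing the pointwise values $f_j(x)$ as Taylor coefficients, working only with ball averages of the derivatives, which is a more robust device in lower-regularity settings — though for smooth $f$, as assumed here, this makes no difference.
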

\begin{proof} Suppose that $x\in \R^n $ and $u\in (0,\infty)$. For every $j\in \n$ define
\begin{equation}\label{eq:def aj}
a_j(u,x)\eqdef\int_0^1 P^0_{su} f_j^x \dd s\in Y,\qquad\mathrm{where}\qquad   f_j\eqdef \frac{\partial f}{\partial x_j}.
\end{equation}
Also, define an affine function $\Lambda_{x,u}:\R^n\to Y$ by setting
\begin{equation}\label{eq:def Lambda choice}
\forall\, y\in \R^n,\qquad \Lambda_{x,u}(y)\eqdef f(x)+\sum_{j=1}^n y_ja_j(u,x).
\end{equation}
An application of~\eqref{eq:move to Bn} with $\Lambda=\Lambda_{x,u}$ shows that
\begin{multline}\label{eq:distance to Lambda x u}
\left(\int_0^\infty\int_{B^n}\frac{\|f^x(uy)-P^1_uf^x(uy)\|_Y^p}{u^{q+1}}\dd y\dd u\right)^{\frac{1}{p}}\\\lesssim \min\{\sqrt{pn},n\}\left(\int_0^\infty\int_{B^n}\frac{\left\|f^x(uy)-\Lambda_{x,u}(uy)\right\|_Y^p}{u^{q+1}}\dd y\dd u\right)^{\frac{1}{p}}.
\end{multline}
Observe that
\begin{align*}
f^x(uy)-\Lambda_{x,u}(uy)\stackrel{\eqref{eq:def Lambda choice}}{=} \int_0^1 \frac{d}{ds} f^x(suy)\dd s-\sum_{j=1}^n uy_ja_j(u,x)
\stackrel{\eqref{eq:def aj}}{=} \sum_{j=1}^n \int_0^1uy_j\big(f_j^x(suy)-P^0_{su}f_j^x\big) \dd s.
\end{align*}
By the triangle inequality in $L_p(B^n,Y)$, this implies that for every $u\in (0,\infty)$,
\begin{multline}\label{eq:for hardy}
\left(\int_{B^n}\left\|f^x(uy)-\Lambda_{x,u}(uy)\right\|_Y^p\dd y\right)^{\frac{1}{p}}\le \sum_{j=1}^n \int_0^1 u\bigg(\int_{B^n}|y_j|^p\left\|f_j^x(suy)-P^0_{su}f_j^x\right\|_Y^p\dd y\bigg)^{\frac{1}{p}}\dd s\\
\le \sum_{j=1}^n \int_0^u\bigg(\int_{B^n}\left\|f_j^x(ty)-P^0_{t}f_j^x\right\|_Y^p\dd y\bigg)^{\frac{1}{p}}\dd t=\sum_{j=1}^n \int_0^u h_j(t)\dd t,
\end{multline}
where we used the crude estimate $\max_{y\in B^n}\max_{j\in \n}|y_j|\le 1$ and introduced the notation
\begin{equation}\label{eq:def hj}
\forall\, (j,t)\in \n\times (0,\infty),\qquad h_j(t)\eqdef \bigg(\int_{B^n}\left\|f_j^x(ty)-P^0_{t}f_j^x\right\|_Y^p\dd y\bigg)^{\frac{1}{p}}.
\end{equation}
A combination of~\eqref{eq:for hardy} with the triangle inequality and Hardy's inequality~\cite[Section~A.4]{Ste70-singular} yields
\begin{multline}\label{hardy used}
\left(\int_0^\infty\int_{B^n}\frac{\left\|f^x(uy)-\Lambda_{x,u}(uy)\right\|_Y^p}{u^{q+1}}\dd y\dd u\right)^{\frac{1}{p}}\le \sum_{j=1}^n \bigg(\int_0^\infty\bigg(\int_0^uh_j(t)\dd t\bigg)^p
\frac{\dd u}{u^{q+1}}\bigg)^{\frac{1}{p}}
 \\\le \sum_{j=1}^n \frac{p}{q}\left(\int_0^\infty \frac{h_j(u)^p}{u^{q-p+1}} \dd u\right)^{\frac{1}{p}}\stackrel{\eqref{eq:def hj}}{=}\frac{p}{q}\sum_{j=1}^n \bigg(\int_0^\infty\int_{B^n}\left\|f_j^x(ty)-P^0_{t}f_j^x\right\|_Y^p \frac{\dd u}{u^{q-p+1}} \bigg)^{\frac{1}{p}}.
\end{multline}
By applying Lemma~\ref{lem:n+q} to each of the functions $\{h_j\}_{j=1}^n$ (with $q$ replaced by $q-p$), we conclude from~\eqref{hardy used} that the following estimate holds true.
\begin{equation*}
\left(\int_0^\infty\int_{B^n}\frac{\left\|f^x(uy)-\Lambda_{x,u}(uy)\right\|_Y^p}{u^{q+1}}\dd y\dd u\right)^{\frac{1}{p}} \lesssim \frac{p}{q(n+q-p)^{\frac{1}{p}}} \sum_{j=1}^n \bigg(\int_{\R^n}\frac{\|f_j^x(y)-f_j(x)\|_Y^p}{\|y\|_2^{n+q-p}}\dd y\bigg)^{\frac{1}{p}}.
\end{equation*}
Recalling the definition of $f_j$ in~\eqref{eq:def aj}, the desired estimate~\eqref{eq:fj desired} now follows from~\eqref{eq:distance to Lambda x u}.
\end{proof}

\subsection{Proof of Lemma~\ref{lem:simple relation}}\label{sec:infinity lemma}

Fix $\e\in (0,1)$ and denote $\d\eqdef(\e/9)^{1+n/p}$. Fix also
$0<r<r_p^{X\to Y}(\d)$ and a $1$-Lipschitz function $f:B_X\to Y$. By
the definition of $r_p^{X\to Y}(\d)$,  we can find $y\in X$ and
$\rho\ge r$ such that $y+\rho B_X\subseteq B_X$, and there exists an
affine mapping $\Lambda:X\to Y$ with $\|\Lambda\|_{\Lip}\le 3$, such
that
\begin{equation}\label{eq:delta approx}
\frac{1}{\vol(\rho B_X)}\int_{y+\rho B_X}\left\|f(u)-\Lambda(u)\right\|_Y^p\dd u\le
\d^p\rho^p=\frac{\e^{p+n}\rho^p}{9^{p+n}}.
\end{equation}
We claim that~\eqref{eq:delta approx}, combined with the fact that
$\|f-\Lambda\|_{\Lip}\le \|f\|_{\Lip}+\|\Lambda\|_{\Lip}\le 4$, yields
\begin{equation}\label{eq:contrapositive simple lemma}
\forall\, z\in y+\rho B_X,\qquad \left\|f(z)-\Lambda(z)\right\|_Y\le \e \rho,
\end{equation}
thus completing the proof of Lemma~\ref{lem:simple relation}.

Assume for the sake of obtaining a contradiction
that~\eqref{eq:contrapositive simple lemma} fails. Then there exists
$z\in y+\rho B_x$ such that $\left\|f(z)-\Lambda(z)\right\|_Y> \e
\rho$. Write $\lambda=\e/9$ and define $w=\lambda y+(1-\lambda)z$.
Observe  that we have $\|w-z\|_X=\lambda\|y-z\|_X\le\lambda\rho$. Supposing that
$u\in X$ satisfies $\|u-w\|_X\le \lambda\rho$, we therefore obtain
\begin{equation}\label{eq:inclusion lambda ball}
\|u-y\|_X=\left\|(u-w)-(1-\lambda)(y-z)\right\|_X\le
\|u-w\|+(1-\lambda)\|y-z\|_X \le \rho.
\end{equation}
Moreover, since $\|f-\Lambda\|_{\Lip}\le 4$ we have
\begin{multline}\label{eq:lower on lambda ball}
\|f(u)-\Lambda(u)\|_Y\ge
\|f(z)-\Lambda(z)\|_Y-4\|u-z\|_X\\>\e\rho-4\|u-w\|_X-4\|w-z\|_X\ge
\e\rho-8\lambda\rho=\lambda\rho.
\end{multline}
It follows from~\eqref{eq:inclusion lambda ball} that $w+\lambda\rho
B_X\subseteq y+\rho B_X$, and it follows from~\eqref{eq:lower on
lambda ball} that for $u\in w+\lambda\rho B_X$ the integrand in the
left hand side of~\eqref{eq:delta approx} is strictly larger than
$\lambda \rho$. Hence, \eqref{eq:delta approx} yields the following
contradiction.
\begin{equation*}
\lambda^{p+n}\rho^p=\frac{\e^{p+n}\rho^p}{9^{p+n}}> \frac{\vol(\lambda\rho B_X)}{\vol(\rho B_X)}\lambda^p\rho^p=\lambda^{p+n}\rho^p. \tag*{\qed}
\end{equation*}

\subsection{An upper bound on the modulus of affine approximability}\label{sec:upper bounds} The example that is constructed below was obtained in collaboration with Charles Fefferman; we thank him for agreeing that we include it here. A simple construction from~\cite{LN12} shows that if $p\in [2,\infty)$ and $\e\in (0,1)$ then
\begin{equation}\label{eq:previous bound on r infty}
r^{\ell_2^n\to \ell_2(\ell_p)}(\e)\lesssim \frac{e^{-1/(c\e)^p}}{\sqrt{n}},
\end{equation}
where $c\in (0,\infty)$ is a universal constant. We shall now show how the example of~\cite{LN12}  can be tensorized so as to yield an improved dependence on $n$, and we shall also briefly discuss the problem of bounding $r_q^{X\to Y}(\e)$ for finite $q\ge 1$. The following lemma is an $L_q$-variant of Lemma~4.1 of~\cite{LN12}.

\begin{lemma}\label{lem:sawtooth} There exists a universal constant $C\in (0,\infty)$ with the following property. For every $\e\in (0,1)$ and $p\in [1,\infty)$ there exists a $1$-Lipschitz function $f:\R\to \ell_p$ such that for every $q\in [1,\infty]$ and every affine mapping $\Lambda:\R\to \ell_p$, if $a,b\in \R$ satisfy $-1\le a<b\le 1$ then
\begin{equation}\label{eq:lower b-a}
  \frac{b-a}{2}\ge 4e^{-(C/\e)^p}\implies \left(\frac{1}{b-a}\int_a^b \left\|f(x)-\Lambda(x)\right\|_p^q\dd x\right)^{\frac1{q}}>\e\cdot \frac{b-a}{2}.
\end{equation}
Consequently,
$$
r_q^{\R\to \ell_p}(\e)\lesssim e^{-(C/\e)^p}.
$$
\end{lemma}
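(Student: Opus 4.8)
The plan is to construct a single ``multiscale sawtooth'' $f\colon\R\to\ell_p$ which, on every interval down to the claimed threshold, oscillates in one coordinate with amplitude proportional to $\e$ times the length of the interval; no affine map can track such an oscillation. Let $\psi(t)\eqdef\dist(t,\Z)$, a $1$-periodic, $1$-Lipschitz function with values in $[0,\tfrac12]$; fix a small universal constant $c_0$ (to be specified at the end), set $N\eqdef\lceil(c_0/\e)^p\rceil\ge1$ and $\lambda\eqdef N^{-1/p}$, and define
\begin{equation*}
f(x)\eqdef\sum_{k=1}^N\lambda2^{-k}\psi\bigl(2^kx\bigr)\,e_k\in\ell_p,
\end{equation*}
where $(e_k)$ is the unit vector basis of $\ell_p$. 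Each coordinate $x\mapsto\lambda2^{-k}\psi(2^kx)$ is $\lambda$-Lipschitz, is periodic with period $2^{-k}$, and has range $[0,\lambda2^{-k-1}]$; consequently $\|f(x)-f(y)\|_p\le\bigl(\sum_{k=1}^N\lambda^p\bigr)^{1/p}|x-y|=|x-y|$, so $f$ is $1$-Lipschitz.

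Since on any probability space the $L_q$ norm is nondecreasing in $q$, the average $\bigl(\tfrac1{b-a}\int_a^b\|f-\Lambda\|_p^q\,\dd x\bigr)^{1/q}$ is nondecreasing in $q\in[1,\infty]$, and it suffices to prove the implication in~\eqref{eq:lower b-a} for $q=1$. Fix $-1\le a<b\le1$, write $\ell\eqdef b-a$, and assume $\ell/2\ge4e^{-(C/\e)^p}$. I would choose the universal constant $C$ so small (relative to $c_0$) that $e^{-(C/\e)^p}\ge2^{-N}$ for all $\e\in(0,1)$ and $p\ge1$; this is possible since $N\ge(c_0/\e)^p$ and $(\ln2)^{1/p}\ge\ln2$, so e.g.\ $C\eqdef c_0\ln2$ works. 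Then $\ell\ge8e^{-(C/\e)^p}\ge2^{-N+2}$, so there is a (necessarily unique) $k^*\in\{1,\dots,N\}$ with $\ell/8<2^{-k^*}\le\ell/4$; thus $[a,b]$ contains a union of complete periods of the $k^*$-th coordinate $f_{k^*}$ of $f$, of total length at least $\ell/2$, consisting of several full periods (as many as we wish if, instead, we demand $2^{-k^*}\le\ell/M$ for a large universal $M$, which only enlarges $C$).

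Now let $\Lambda\colon\R\to\ell_p$ be an arbitrary affine map and let $\Lambda_{k^*}\colon\R\to\R$ be its (scalar, affine) $k^*$-th coordinate. From $\|v\|_p\ge|v_{k^*}|$,
\begin{equation*}
\frac1\ell\int_a^b\|f(x)-\Lambda(x)\|_p\,\dd x\ge\frac1\ell\int_a^b\bigl|f_{k^*}(x)-\Lambda_{k^*}(x)\bigr|\,\dd x,
\end{equation*}
so matters reduce to the one-dimensional estimate that a sawtooth of period $T$ and amplitude $A$ (here $T=2^{-k^*}$ and $A=\lambda2^{-k^*-1}$), restricted to an interval containing several complete periods of total length comparable to its length, has $L_1$-average distance at least $c_1A$ to every affine function, for a universal $c_1>0$. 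This is the content of (the argument behind) Lemma~4.1 of~\cite{LN12}, which is stated there for the sup norm but whose proof applies verbatim to $L_1$-averages: over each complete period the sawtooth has mean $A/2$, so the constant part of $\Lambda_{k^*}$ is at $L_1$-distance a fixed fraction of $A$ on average; a slope of size $\gtrsim A/T$ pushes $\Lambda_{k^*}$ out of the range $[0,2A]$ on a definite portion of the interval, whereas a slope of size $\lesssim A/T$ perturbs $\Lambda_{k^*}$ by only $\lesssim A$ across each period and hence cannot help. Combining, $\tfrac1\ell\int_a^b\|f-\Lambda\|_p\,\dd x\ge c_1A\ge c_1\lambda\ell/16$; since $\lambda=N^{-1/p}\asymp\e/c_0$, choosing $c_0$ small enough that $c_1N^{-1/p}/16>\e/2$ whenever $\e\le c_0$ gives the conclusion $>\e(b-a)/2$ in that range, while for $\e>c_0$ one has $C<\e\le\e(\ln4)^{1/p}$, hence $4e^{-(C/\e)^p}>1\ge(b-a)/2$ and the implication holds vacuously. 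Finally, the ``Consequently'' clause follows at once: for this $1$-Lipschitz $f$ no affine map satisfies the defining inequality of $r_q^{\R\to\ell_p}(\e)$ on any sub-interval of $(-1,1)$ of radius $\ge4e^{-(C/\e)^p}$, hence $r_q^{\R\to\ell_p}(\e)\le4e^{-(C/\e)^p}\lesssim e^{-(C/\e)^p}$.

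The main obstacle is the one-dimensional oscillation lower bound for $L_1$-averages (rather than $L_\infty$) against the full two-parameter affine family, uniformly in the slope; the remainder is bookkeeping of the dyadic scales and of the Lipschitz budget $\sum_k\lambda^p\le1$, together with the elementary reduction of all $q$ to $q=1$.
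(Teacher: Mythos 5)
Your proof is correct and follows essentially the same route as the paper's: the same multiscale $\ell_p$-valued sawtooth with $N\asymp(c/\e)^p$ dyadic scales and Lipschitz budget $\sum_k\lambda^p=1$, selection of the scale $2^{-k^*}\asymp b-a$, projection onto that coordinate, and a one-dimensional lower bound for the $L_1$-distance of a full sawtooth period to the two-parameter affine family (which the paper derives via Lemma~\ref{lem:projection closest in L_p}, i.e.\ near-optimality of the projection $P_1^1$, while you invoke the elementary compactness/case analysis behind Lemma~4.1 of~\cite{LN12}). Your reduction of all $q\in[1,\infty]$ to $q=1$ by monotonicity of $L_q$-averages is a small simplification over the paper's direct treatment of general $q$, but does not change the substance of the argument.
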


\begin{proof} Let $\f:\R\to \R$ be the piecewise affine (``sawtooth") function defined by $\f(2\Z)=\{0\}$ and $\f(1+2\Z)=\{1\}$. Fix $m\in \N$ that will be determined later. Denoting  the standard basis of $\ell_p^m$ by $\{e_j\}_{j=1}^m$, define $f:\R\to \ell_p^m$ by setting for every $x\in \R$,
\begin{equation}\label{eq:def lp sawtooth}
f(x)\eqdef \frac{1}{m^{1/p}}\sum_{k=1}^m \frac{\f\left(2^kx\right)}{2^k}e_k.
\end{equation}
Since $\f$ is $1$-Lipschitz, it follows from~\eqref{eq:def lp sawtooth} that also $f$ is $1$-Lipschitz.

Fix $a,b\in \R$ satisfying $-1\le a\le b\le 1$ and $b-a\ge 4/2^m$. There exists $k\in \{1,\ldots, m\}$ such that $4/2^k\le b-a\le 8/2^k$. Hence there is $j\in \{0,\ldots,2^{k-1}-1\}$ for which $[j/2^{k-1},(j+1)/2^{k-1}]\subset [a,b]$. Then, since $b-a\le 8/2^k$, for every affine mapping $\Lambda:\R\to \ell_p^m$ we have
\begin{equation}\label{eq:restrict to dyadic}
\frac{1}{b-a}\int_a^b \left\|f(x)-\Lambda(x)\right\|_p^q\dd x\gtrsim 2^k\int_{j/2^{k-1}}^{(j+1)/2^{k-1}} \left\|f(x)-\Lambda(x)\right\|_p^q\dd x.
\end{equation}
Writing $\Lambda=(\Lambda_1,\ldots,\Lambda_m)$, where $\Lambda_1,\ldots,\Lambda_m:\R\to \R$ are affine, it follows from~\eqref{eq:def lp sawtooth} and~\eqref{eq:restrict to dyadic} that
\begin{align}\label{eq:choose correct coordinate}
\nonumber\frac{1}{b-a}\int_a^b \left\|f(x)-\Lambda(x)\right\|_p^q\dd x&\gtrsim  2^k\int_{j/2^{k-1}}^{(j+1)/2^{k-1}} \left|\frac{\f\left(2^kx\right)}{m^{1/q}2^k}-\Lambda_k(x)\right|^q\dd x\\&=\frac{1}{m^{q/p}2^{qk}}\int_{-1}^1\left|\f(y)-\mathscr{L}(y)\right|^q\dd y,
\end{align}
where $\mathscr{L}:\R\to \R$ is the affine function given for every $y\in \R$ by $\mathscr{L}(y)\eqdef m^{1/p}2^k\Lambda_k((y+2j+1)/2^k)$. Recalling~\eqref{eq:def P0} and~\eqref{explicit legendre}, we have $P_1^1\f\equiv 1/2$. Hence, by Lemma~\ref{lem:projection closest in L_p},
\begin{equation}\label{eq:distance to legendre 1 dim}
\left(\int_{-1}^1\left|\f(y)-\mathscr{L}(y)\right|^q\dd y\right)^{\frac{1}{q}}\gtrsim \left(\int_{-1}^1 \left|\f(y)-\frac12\right|^q\dd y\right)^{\frac{1}{q}}\gtrsim 1.
\end{equation}
Since $b-a\asymp 2^{-k}$, by combining~\eqref{eq:choose correct coordinate} and~\eqref{eq:distance to legendre 1 dim}  we see that
\begin{equation}\label{eq:get lower b-a with m}
\left(\frac{1}{b-a}\int_a^b \left\|f(x)-\Lambda(x)\right\|_p^q\dd x\right)^{\frac{1}{q}}\ge \frac{\eta}{m^{1/p}}\cdot \frac{b-a}{2},
\end{equation}
where $\eta\in (0,\infty)$ is a universal constant. Suppose that $\e<\eta$ and choose $m$ to be the largest positive integer such that $m<(\eta/\e)^p$. Then~\eqref{eq:get lower b-a with m} implies the conclusion of~\eqref{eq:lower b-a}. The requirement here is that $b-a\ge 4/2^m$, and since $m+1\ge (\eta/\e)^p$, this requirement is satisfied if $b-a\ge 8/2^{(\eta/\e)^p}$, which follows from $(b-a)/2\ge 4e^{-(\eta/(2\e))^p}$. Thus~\eqref{eq:lower b-a} holds true with $C=\eta/2$. Note that, with this choice of $C$, the implication~\eqref{eq:lower b-a} holds vacuously when $\e\ge \eta$.
\end{proof}

Lemma~\ref{lem:simple tensorization} below tensorizes Lemma~\ref{lem:sawtooth} to improve over~\eqref{eq:previous bound on r infty}, obtaining exponential decay.

\begin{lemma}\label{lem:simple tensorization} There exists universal constants $K,\e_0\in (0,1)$ such that for every $\e\in (0,\e_0]$, every $p\in [1,\infty)$ and every $n\in \N$ we have
\begin{equation}\label{eq:exponential lower}
r^{\ell_2^n\to \ell_2(\ell_p)}(\e)\le e^{-n(K/\e)^p}.
\end{equation}
\end{lemma}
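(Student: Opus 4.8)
The plan is to exhibit one bad $1$-Lipschitz function $F\colon B_{\ell_2^n}\to\ell_2(\ell_p)$ obtained by stacking $n$ rescaled copies of the one‑dimensional sawtooth $f$ from Lemma~\ref{lem:sawtooth}, one per coordinate, at geometrically separated scales, so that together they oscillate --- with amplitude comparable to $\e$ times the current scale --- at every dyadic scale between $1$ and $2^{-nm}$, where $m\asymp(1/\e)^p$ is the integer from Lemma~\ref{lem:sawtooth}. Writing $\ell_2(\ell_p)=\ell_2(\N;\ell_p)$ and $l_j:=(j-1)m$, I would let the $j$-th $\ell_p$-coordinate of $F(x)$ be $2^{-l_j}f\!\left(2^{l_j}x_j\right)$ for $j\in\{1,\dots,n\}$ and $0$ for $j>n$. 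Each map $s\mapsto 2^{-l_j}f(2^{l_j}s)$ is $1$-Lipschitz because $f$ is, and since distinct $\ell_p$-coordinates of $F$ depend on distinct real variables and are aggregated in the $\ell_2$ sense, $\|F(x)-F(x')\|_{\ell_2(\ell_p)}^2=\sum_{j=1}^n\|2^{-l_j}f(2^{l_j}x_j)-2^{-l_j}f(2^{l_j}x'_j)\|_{\ell_p}^2\le\sum_{j=1}^n|x_j-x'_j|^2=\|x-x'\|_2^2$, so $F$ is $1$-Lipschitz from $\ell_2^n$ to $\ell_2(\ell_p)$ for every $p\in[1,\infty)$. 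The point is that the $j$-th coordinate of $F$ involves only the dyadic frequencies $2^{l_j+1},\dots,2^{l_j+m}$, hence oscillates exactly at scales $2^{-l_j-1},\dots,2^{-l_j-m}=2^{-jm}$.

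The core of the argument is the scale-by-scale lower bound that for every sub-ball $B^*=x_0+\rho B_{\ell_2^n}\subseteq B_{\ell_2^n}$ with $\rho\ge 2^{1-nm}$ and every affine $\Lambda\colon\ell_2^n\to\ell_2(\ell_p)$ one has $\sup_{x\in B^*}\|F(x)-\Lambda(x)\|_{\ell_2(\ell_p)}>\e\rho$. To see this, first choose $j\in\{1,\dots,n\}$ with $\rho$ in the ``scale window'' $[2\cdot 2^{-jm},\,2\cdot 2^{-(j-1)m}]$ of coordinate $j$; these windows overlap at their common endpoints, so their union over $j$ is $[2^{1-nm},2)\supseteq[2^{1-nm},1)$ and such a $j$ exists. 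Now restrict to the Euclidean segment $\{x_0+te_j:|t|<\rho\}\subseteq B^*$, where $e_j$ is the $j$-th standard basis vector: along it the $j$-th $\ell_p$-coordinate of $F$ is the one-variable function $t\mapsto 2^{-l_j}f\!\left(2^{l_j}((x_0)_j+t)\right)$, while the $j$-th $\ell_p$-coordinate of $\Lambda$ restricts to an affine function of $t$ with values in $\ell_p$. Since $\|\cdot\|_{\ell_2(\ell_p)}$ dominates the norm of any one coordinate, $\sup_{x\in B^*}\|F(x)-\Lambda(x)\|_{\ell_2(\ell_p)}\ge 2^{-l_j}\sup_{y\in J}\|f(y)-\Lambda'(y)\|_{\ell_p}$, where $\Lambda'$ is affine and $J$ is the image of $\{(x_0)_j+t:|t|<\rho\}$ under $y\mapsto 2^{l_j}y$, an interval of length $2^{l_j}\cdot 2\rho\in[4/2^m,4]$ by the choice of the window. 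For any such interval (not necessarily inside $[-1,1]$: the relevant part of the argument only uses periodicity of $\varphi$) and any affine map, the estimate established in the proof of Lemma~\ref{lem:sawtooth} --- the chain culminating in~\eqref{eq:get lower b-a with m}, taken with $q=\infty$ --- gives $\sup_{y\in J}\|f(y)-\Lambda'(y)\|_{\ell_p}\ge\frac{\eta}{m^{1/p}}\cdot\frac{|J|}{2}$ with $\eta\in(0,1)$ the universal constant of that proof. Hence $\sup_{x\in B^*}\|F(x)-\Lambda(x)\|_{\ell_2(\ell_p)}\ge\frac{\eta}{m^{1/p}}\rho$, and taking $m$ to be the largest integer with $m<(\eta/\e)^p$, exactly as in Lemma~\ref{lem:sawtooth}, makes $\eta/m^{1/p}>\e$, which is the claimed bound.

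The lemma then follows. The displayed bound shows that our $F$ admits no affine map $\Lambda$ with $\sup_{B^*}\|F-\Lambda\|_{\ell_2(\ell_p)}\le\e\rho$ on any ball $B^*\subseteq B_{\ell_2^n}$ of radius $\rho\ge 2^{1-nm}$; since $\|F\|_{\Lip}\le 1$, a fortiori no $\Lambda$ satisfies the requirement of Definition~\ref{def Lp modulus} with $p=\infty$ on such a ball, so $r^{\ell_2^n\to\ell_2(\ell_p)}(\e)\le 2^{1-nm}$. Finally, choosing $\e_0:=\eta/4$ (and replacing $\eta$ by $\min\{\eta,1/2\}$ at the outset so that $\eta\in(0,1)$) guarantees $(\eta/\e)^p\ge 4$ for $\e\le\e_0$, hence $m\ge(\eta/\e)^p-1$, and a short computation gives $2^{1-nm}=2e^{-nm\ln 2}\le e^{-n(K/\e)^p}$ with $K:=(\ln 2)\eta/4\in(0,1)$, which is~\eqref{eq:exponential lower}.

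The one genuine idea, and the step I expect to be the real obstacle, is the construction: one must realize that spreading $m\asymp(1/\e)^p$ dyadic frequencies per coordinate over $n$ disjoint, geometrically separated blocks keeps the map $1$-Lipschitz --- thanks to the $m^{-1/p}$ normalization inside each block --- while still producing oscillation of amplitude $\asymp\e\cdot(\text{scale})$ at every scale down to $2^{-nm}=e^{-n\Theta((1/\e)^p)}$; this is exactly what turns the one-dimensional exponent $(1/\e)^p$ of Lemma~\ref{lem:sawtooth} into the $n$-dimensional exponent $n(1/\e)^p$. Everything afterwards is routine: the reduction to one dimension is immediate because each $\ell_p$-coordinate of $F$ depends on a single real variable, so restricting to a coordinate segment literally turns $F$ into a one-variable map and $\Lambda$ into an affine function of that variable; the only minor wrinkle is discarding the hypothesis $[a,b]\subseteq[-1,1]$ of Lemma~\ref{lem:sawtooth}, which is harmless since the intervals in question have length at most $4$ and $\varphi$ is periodic.
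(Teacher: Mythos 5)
Your proposal is correct and follows essentially the same route as the paper: stack rescaled copies of the one-dimensional sawtooth of Lemma~\ref{lem:sawtooth} at geometrically separated scales, one per coordinate, then for a given sub-ball select the coordinate whose scale window matches the radius and restrict to a coordinate segment to invoke the one-dimensional estimate. The only cosmetic differences are powers of $2$ in place of powers of $e$ and your explicit remark (via periodicity of $\varphi$) that the relevant interval need not lie in $[-1,1]$, a point the paper's proof passes over silently.
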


\begin{proof} By Lemma~\ref{lem:sawtooth} (with $q=\infty$) we can fix $K,\e_0\in (0,1)$ such that for every $\e\in (0,\e_0]$ and $p\in [1,\infty)$ there exists a $1$-Lipschitz mapping $f:[-1,1]\to \ell_p$ such that for every interval $[a,b]\subset [-1,1]$ with $(b-a)/2> e^{-(K/\e)^p}$ and for every affine mapping $\Lambda:\R\to \ell_p$ there exists $x\in [a,b]$ such that $\|f(x)-\Lambda(x)\|_p>\e(b-a)/2$.

Define $F:\R^n\to \ell_2^n(\ell_p)$ by setting for every $x=(x_1,\ldots,x_n)\in \R^n$,
$$
F(x)\eqdef \left(f\left(x_1\right),\frac{f\left(e^{(K/\e)^p}x_2\right)}{e^{(K/\e)^p}},\ldots,\frac{f\left(e^{(n-1)(K/\e)^p}x_n\right)}{e^{(n-1)(K/\e)^p}}\right)\in \ell_2^n(\ell_p).
$$
By applying the  $1$-Lipschitz condition for $f$ coordinate-wise, we see that $F$ is $1$-Lipschitz as a mapping from $\ell_2^n$ to $\ell_2^n(\ell_p)$. Fix $x\in B^n$ and $r\in (0,1)$ such that $x+rB^n\subset B^n$. Suppose from now on that $r>e^{-n(K/\e)^p}$ and let $j\in \N$ be the largest integer for which $r\le e^{-(j-1)(K/\e)^p}$. Then $j\in \n$ and $e^{(j-1)(K/\e)^p}r> e^{-(K/\e)^p}$. Let $\Lambda:\R^n\to \ell_2^n(\ell_p)$ be an affine mapping, and write $\Lambda=(\Lambda_1,\ldots,\Lambda_n)$, where $\Lambda_1,\ldots,\Lambda_n:\R^n\to \ell_p$ are affine. Consider the affine mapping $\Lambda_j':\R\to \ell_p$ given by $\Lambda_j'(y)=\Lambda_j(x_1,\ldots,x_{j-1},y,x_{j+1},\ldots,x_n)$.  Set $a=x_je^{(j-1)(K/\e)^p}-re^{(j-1)(K/\e)^p}$ and $b=x_je^{(j-1)(K/\e)^p}+re^{(j-1)(K/\e)^p}$. Then $$\frac{b-a}{2}=re^{(j-1)(K/\e)^p}>e^{-(K/\e)^p},$$ so by our assumption on $f$ there exists $w\in \left[x_je^{(j-1)(K/\e)^p}-re^{(j-1)(K/\e)^p}, x_je^{(j-1)(K/\e)^p}+re^{(j-1)(K/\e)^p}\right]$ with
$$
\left\|f(w)-e^{(j-1)(K/\e)^p}\Lambda_j'\left(we^{-(j-1)(K/\e)^p}\right)\right\|_p>\e r e^{(j-1)(K/\e)^p}.
$$
Setting $y=we^{-(j-1)(K/\e)^p}$, this is the same as asserting that there exists $y\in [x_j-r,x_j+r]$ with
$$
\left\|e^{-(j-1)(K/\e)^p}f\left(e^{(j-1)(K/\e)^p}y\right)-\Lambda'_j(y)\right\|_p>\e r.
$$
Hence, writing $z=(x_1,\ldots,x_{j-1},y,x_{j+1},\ldots,x_n)$, we have that $z\in x+rB^n$ and
\begin{equation}\label{eq:conclusion exponential}
\left\|F(z)-\Lambda(z)\right\|_{\ell_2^n(\ell_p)}\ge \left\|e^{-(j-1)(K/\e)^p}f\left(e^{(j-1)(K/\e)^p}y\right)-\Lambda'_j(y)\right\|_p>\e r,
\end{equation}
Since~\eqref{eq:conclusion exponential} holds true for every affine mapping $\Lambda:\R^n\to \ell_2^n(\ell_p)$ whenever $x+rB^n\subset B^n$ and $r>e^{-n(K/\e)^p}$, the proof of~\eqref{eq:exponential lower} is complete.
\end{proof}

\begin{remark}
{\em An upper bound on $r_p^{\ell_2^n\to \ell_2(\ell_p)}(\e)$ is a consequence of Lemma~\ref{lem:simple tensorization} and Lemma~\ref{lem:simple relation}. It seems likely that a significantly stronger upper bound holds true, but the above tensorization procedure does not seem to yield such an improvement. We leave the investigation of upper bounds on the modulus of $L_p$ affine approximability as an interesting question for future research.}
\end{remark}

\section{A UMD-valued Dorronsoro-type estimate}\label{sec:dorronsoro}

The main ingredient of the proof of Theorem~\ref{thm:Lp UAAP intro} (hence also Theorem~\ref{thm:UAAP intro}) is the following result.

\begin{theorem}\label{w1p dorronsoro} There exists a universal constant $\kappa\in [2,\infty)$ with the following property. Suppose that $(Y,\|\cdot\|_Y)$ is a UMD Banach space and write $\beta=\beta(Y)$. Suppose that $f:\R^n\to Y$ is a Lipschitz and compactly supported function. Then
\begin{equation}\label{eq:n factor}
\bigg(\frac{1}{V_n}\iiint_{ \R^n\times B^n\times (0,\infty)}\frac{\left\|f^x(uy)-P_u^1f^x(uy)\right\|_Y^{\kappa\beta}}{ u^{1+\kappa\beta}}\dd x  \dd y \dd u\bigg)^{\frac{1}{\kappa\beta}}
\lesssim \beta^{15}n^{\frac52}\big(\vol(\supp(f))\big)^{\frac{1}{\kappa\beta}}\|f\|_{\Lip}.
\end{equation}
\end{theorem}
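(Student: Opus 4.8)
The plan is to recognise the triple integral in \eqref{eq:n factor} as a Dorronsoro-type oscillation functional for the best affine approximants of $f$, to dominate it by the Sobolev quantity $\|\nabla f\|_{L^{\kappa\beta}(\R^n;Y)}$ via vector-valued Littlewood--Paley theory, and then to use $\|\nabla f\|_{L^{\kappa\beta}(\R^n;Y)}\le(\vol(\supp f))^{1/(\kappa\beta)}\|f\|_{\Lip}$. Mollification reduces matters to $f$ smooth (convolving with an approximate identity does not increase $\|f\|_{\Lip}$, enlarges $\supp f$ arbitrarily little, and keeps the left-hand side of \eqref{eq:n factor} continuous); write $p=\kappa\beta$. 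For each fixed $u$ the substitution $z=x+uy$ rewrites $\frac1{V_n}\iiint\|f^x(uy)-P^1_uf^x(uy)\|_Y^{p}u^{-1-p}\,dx\,dy\,du$ as $\int_{\R^n}\int_0^\infty(u^{-1}g_u(x))^{p}\,u^{-1}du\,dx$ with $g_u(x)$ the $L^{p}$-average of $\|f-(P^1_uf^x)^{-x}\|_Y$ over $x+uB^n$; by Lemma~\ref{lem:projection closest in L_p} this is at most $(\min\{\sqrt{pn},n\})^{p}$ times the same expression with $g_u$ replaced by the genuine best-affine error $\Omega_u(x):=\inf_{\Lambda\ \mathrm{affine}}\big(\intav_{x+uB^n}\|f-\Lambda\|_Y^{p}\big)^{1/p}$, and the Poincar\'e--Wirtinger inequality on Euclidean balls (valid for any Banach target, applied after subtracting from $f$ its linearisation at the centre, with a $\poly(n)$ constant that also absorbs the John-position loss \eqref{eq:john posiiton}) gives $u^{-1}\Omega_u(x)\lesssim\poly(n)\max_{\ell}h_{\ell,u}(x)$, where $h_{\ell,u}(x):=\big(\intav_{x+uB^n}\|\partial_\ell f-(\partial_\ell f)_{x+uB^n}\|_Y^{p}\big)^{1/p}$ is the scale-$u$ $L^{p}$-oscillation of the bounded function $\partial_\ell f$. (All of this is essentially in Section~\ref{sec:prelim}; at the endpoint exponent the fractional-Sobolev bounds of Corollaries~\ref{lem:fractional sobolev appears} and~\ref{lem:fractional sobolev appears2} are unavailable, as the relevant seminorms diverge --- hence the Fourier-analytic argument below.)

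Next, pass from $\ell^{p}$ to $\ell^{2}$ in the scale variable: the elementary doubling $\Omega_u(x)\le\lambda^{n/p}\Omega_{\lambda u}(x)$ ($\lambda\ge 1$) and the nesting $\|(a_k)\|_{\ell^{p}}\le\|(a_k)\|_{\ell^{2}}$ (as $p\ge2$), applied after discretising the $u$-integral along a geometric net of ratio $\lambda=2^{\min\{1,p/n\}}$ (so $\lambda^{n/p}$ stays bounded), give pointwise in $x$, with an absolute constant,
\[
\Big(\int_0^\infty\big(u^{-1}\Omega_u(x)\big)^{p}\,\tfrac{du}{u}\Big)^{1/p}\ \lesssim\ \Big(\sum_{k\in\Z}\big(\lambda^{-k}\Omega_{\lambda^k}(x)\big)^{2}\Big)^{1/2}.
\]
This keeps $L^{p}(\R^n)$ as the outer norm --- where the vector-valued Littlewood--Paley theory has polynomial constants --- while squaring the scale variable, which opens the door to Plancherel-type cancellation.

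It remains to prove $\big\|\big(\sum_k(\lambda^{-k}\Omega_{\lambda^k})^{2}\big)^{1/2}\big\|_{L^{p}(\R^n)}\lesssim\poly(n)\,\beta^{O(1)}\,\|\nabla f\|_{L^{p}(\R^n;Y)}$. By the first paragraph this discrete square function is controlled by $\poly(n)$ times a square function of $\nabla f$ built from the mean-zero ball-averaging operators $\Psi_u:=I-A_u$, $A_ug:=g*\mathbf 1_{uB^n}/(V_nu^n)$, at the scales $u=\lambda^k$, plus a slowly varying remainder that is supported (up to rapidly decaying tails) near $\supp f$ and pointwise $\lesssim\poly(n)\|f\|_{\Lip}$, hence contributes only $\lesssim\poly(n)(\vol\supp f)^{1/p}\|f\|_{\Lip}$. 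The fixed-scale average over $B^n$ inside each $h_{\ell,u}$ is an isometry on $L^{p}(\R^n)$, so Minkowski's inequality (moving $\ell^{2}_k$ inside $L^{p}$, licit as $p\ge2$) reduces the claim to the $Y$-valued Littlewood--Paley $g$-function inequality for the family $(\Psi_{\lambda^k})_k$ (each kernel translated by a bounded multiple of its own scale). For a UMD target this holds with a constant polynomial in $\beta(Y)$ and in $p$ and at most $\poly(n)$ in the dimension; the one genuine subtlety is that the kernel of $\Psi_u$ is Bessel-type rather than Schwartz, so its symbol $1-\widehat{\mathbf 1_{B^n}}(u\xi)/V_n$ fails to be a Mihlin multiplier of arbitrary order --- this is handled by splitting $\mathbf 1_{B^n}$ into a smooth bump (covered by the classical vector-valued $g$-function theorem) plus a boundary layer (estimated by hand, contributing only low-order powers of $n$ and $\beta$). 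Taking the number of moments equal to $\kappa\beta$ is exactly what collapses the polynomial $p$-dependence of this constant into a fixed power of $\beta$. Combining the three steps, using $\|\nabla f\|_{L^{p}(\R^n;Y)}\le(\vol(\supp f))^{1/p}\|f\|_{\Lip}$ (since $\|\nabla f\|_\infty\le\|f\|_{\Lip}$, up to $\sqrt n$ from John's position), and collecting the powers of $n$ and $\beta$ from $\min\{\sqrt{pn},n\}$, the Poincar\'e step, the discretisation and the $g$-function bound, gives the asserted $\beta^{15}n^{5/2}$.

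The main obstacle is this Littlewood--Paley step: running the UMD-valued square-function machinery against the non-smooth operators $\Psi_u$ while simultaneously (i) tracking every constant polynomially in $n$ --- the quantitative-rectifiability literature suppresses the dimension, and a naive reading would produce constants exponential in $n$ --- and (ii) correctly routing the pointwise Lipschitz (i.e.\ $L^\infty$) information on $\nabla f$ through an $\ell^{2}$-in-scale square function, a detour forced by the fact that at the endpoint exponent the $\ell^{p}$-in-scale functional is \emph{not} dominated by any Sobolev norm of $f$ itself.
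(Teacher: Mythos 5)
Your reduction to a Sobolev bound on $\nabla f$ and your use of Lemma~\ref{lem:projection closest in L_p} plus a Poincar\'e-type step are in the right spirit (the paper's Corollary~\ref{lem:fractional sobolev appears2} does something very similar), but the core of your argument contains a step that is false for a general UMD target, and it is exactly the step on which everything else rests.

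You pass from the $\ell^{p}$-in-scale functional to the $\ell^{2}$-in-scale square function via $\|a\|_{\ell^{p}}\le\|a\|_{\ell^{2}}$ (legitimate, since $p\ge 2$, but it makes the quantity \emph{larger}), and then you invoke ``the $Y$-valued Littlewood--Paley $g$-function inequality'' for the mean-zero averaging operators, i.e.\ a bound of the form $\big\|\big(\sum_k\|\Psi_{\lambda^k}g\|_Y^2\big)^{1/2}\big\|_{L^{p}(\R^n)}\lesssim\|g\|_{L^{p}(\R^n,Y)}$. For a UMD space $Y$ that does not have martingale cotype $2$ (e.g.\ $Y=\ell^{q}$, $q>2$), no such $\ell^{2}$-of-scales estimate holds: the vector-valued Littlewood--Paley inequality is only available in the randomized form (Proposition~\ref{prop:littlewood paley}), and converting the randomized sum into a square function with the $Y$-norm inside requires cotype $2$. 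This is precisely the point of the Mart\'inez--Torrea--Xu theory cited in the paper: the correct substitute for cotype-$q$ targets is the $\ell^{q}$-of-scales functional. So the detour to $\ell^{2}$, which you present as forced, is the move that breaks the proof; the $\ell^{p}$-in-scale functional with $p=\kappa\beta$ equal to the cotype of $Y$ is the object one must (and can) control directly, and indeed the theorem's conclusion is exactly that this functional \emph{is} dominated by $\|\nabla f\|_{L^{p}}$ --- contrary to your closing remark. A secondary weakness is that your treatment of the non-smooth symbol of $I-A_u$ (``boundary layer estimated by hand, low-order powers of $n$'') is asserted rather than argued at the one place where polynomial-in-$n$ constants are genuinely delicate.

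For comparison, the paper takes an entirely different route around the endpoint: it bounds the operator $\mathscr{S}f(u,x,y)=f^x(uy)-P^1_uf^x(uy)$ from the Riesz-potential space $H_{s,p}(\R^n,Y)$ into $L_p(\nu_s,Y)$ for fractional smoothness $s\in(0,1)$ and $\sigma\in(1,2)$, with norms blowing up like $1/(1-s)$ and $1/(\sigma-1)$ (Corollary~\ref{cor:combine bessel potential with legendre}, which rests on the comparison $W_{s,p}\lesssim H_{s,p}$ of Theorem~\ref{compare fractional}, proved via the randomized Littlewood--Paley inequality, cotype $p$, and dimension-free multiplier bounds from stochastic-integral representations). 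It then reaches the endpoint $s=1$ by Stein complex interpolation with $\theta\asymp 1/\log(\beta_p(Y)n)$, paying only a logarithm, together with the quantitative interpolation identity for $H_{s,p}$ (imaginary powers of the Laplacian) and the Riesz-transform bound $H_{1,p}\lesssim W_{1,p}$. If you want to salvage your outline, you would need to replace the $\ell^{2}$ square-function step by an argument that works at the exponent $p=\kappa\beta$ matched to the cotype of $Y$, and some mechanism --- such as the paper's interpolation in the smoothness parameter --- to handle the fact that the scale-$u$ oscillation of $\nabla f$ at a single scale does not telescope into $\|\nabla f\|_{L^{p}}$ without a logarithmic or interpolation-type gain.
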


In the case of real-valued functions on $\R^n$, such a statement  was first proved by Dorronsoro~\cite{Do}, with an implicit dependence on the dimension $n$.
Extensions and variants of Dorronsoro's theorem have been further studied within the theory of functions spaces, where norms like the one that appears on the left of~\eqref{eq:n factor} define what is called \emph{local approximation spaces}. See \cite[Section 1.7]{Triebel2} for some discussion of the subsequent history and \cite[Section 3.5.1]{Triebel2} for a different proof of a similar statement. There, the dependence on $n$ is perhaps even more implicit, since it also depends on a non-canonical choice of resolutions of identity used to define some general function spaces.

This dependence on dimension is crucial for us here; specifically, we desire polynomial growth in $n$ in the right hand side of~\eqref{eq:n factor}, while a na\"ive examination of the proof in~\cite{Do} reveals that it yields a much worse (super-exponential) dependence on $n$. Note also that in~\cite{Do}  the $L_{\kappa\beta}$ norm that appears in~\eqref{eq:n factor} can be replaced by an $L_p$ norm for any $p\in (1,\infty)$, while in the present vector-valued setting the geometry of the target space $Y$ influences the value of $p$. In fact, we shall prove a more refined (and stronger)  version of Theorem~\ref{w1p dorronsoro}; see Theorem~\ref{thM:doro more refined} below, in which the $L_{\kappa\beta}$ norm that appears in~\eqref{eq:n factor} can be replaced by an $L_p$ norm provided $Y$ is a UMD Banach space of Rademacher cotype $p$ (see Section~\ref{sec:cotype} below). These refinements are not important for our purposes, i.e., for proving Theorem~\ref{thm:Lp UAAP intro}, but they do imply sharper results, e.g. when $Y$ is itself an $L_q(\mu)$ space. In the same vein, the exponent $15$ of $\beta$ in~\eqref{eq:n factor} is not sharp (for the purpose of Theorem~\ref{thm:Lp UAAP intro} we only desire a polynomial dependence on $\beta$).

The bulk of the ensuing discussion is devoted to the proof of Theorem~\ref{w1p dorronsoro}. Our argument roughly follows the strategy of Dorronsoro in~\cite{Do}, combined with  substantial additions and modifications in order to obtain good dependence on $n$ and also overcome difficulties that arise in the vector-valued setting and are not present in the real-valued setting of~\cite{Do}. As explained in the Introduction, these complications reflect a genuine difference between the vector-valued setting and the real-valued setting, as such results do not hold true for general Banach space targets $Y$, so the geometry of $Y$ must somehow enter into the argument. We did not investigate the extent to which Theorem~\ref{w1p dorronsoro}  (and Theorem~\ref{thM:doro more refined} below) are sharp in terms of the assumptions that are required from $Y$  and the $L_p$ norm that could appear in~\eqref{eq:n factor} (or~\eqref{eq:doro refined} below).

\subsection{Proof of Theorem~\ref{thm:Lp UAAP intro} assuming the validity of Theorem~\ref{w1p dorronsoro}} Here we shall prove Theorem~\ref{thm:Lp UAAP intro} while  using Theorem~\ref{w1p dorronsoro}. This will allow us to focus later on Theorem~\ref{w1p dorronsoro} itself in order complete the justification of Theorem~\ref{thm:Lp UAAP intro}, and hence, by Lemma~\ref{lem:simple relation}, also~\eqref{eq:modest} and  Theorem~\ref{thm:UAAP intro}.

Recalling our setting, we are given a $n$-dimensional normed space $(X=\R^n,\|\cdot\|_X)$ such that~\eqref{eq:john posiiton} holds true. We are also given a normed space $(Y,\|\cdot\|_Y)$ that satisfies the assumptions of Theorem~\ref{w1p dorronsoro}. Suppose that $f:B_X\to Y$ is $1$-Lipschitz. Without loss of generality assume also that $f(0)=0$.

Define $\phi:[0,\infty)\to [0,\infty)$ by
$$
\forall\, u\in [0,\infty),\qquad \phi(u)\eqdef \left\{\begin{array}{ll}1&\mathrm{if}\ u\le \frac{1}{\sqrt{n}},\\
n+1-n^{\frac32}u & \mathrm{if}\ \frac{1}{\sqrt{n}}<u\le \left(1+\frac{1}{n}\right)\frac{1}{\sqrt{n}},\\
0&\mathrm{if}\ u>\left(1+\frac{1}{n}\right)\frac{1}{\sqrt{n}}.\end{array}\right.
$$
Then $\phi$ is supported in $[0,(1+1/n)/\sqrt{n}]$ and it is elementary to verify the following inequalities.
\begin{equation}\label{eq:phi upper}
\forall\, u\in (0,\infty),\qquad \max\big\{\phi(u),\sqrt{n}u\phi(u)\big\}\le 1.
\end{equation}
and
\begin{equation}\label{eq:phi lip}
\forall\, u,v\in (0,\infty),\qquad \big|\phi(u)-\phi(v)\big|\cdot\min\{u,v\}\le (n+1)|u-v|.
\end{equation}
Since by~\eqref{eq:john posiiton} we have $\frac{1}{\sqrt{n}}B^n\subset B_X$, by~\eqref{eq:phi upper} we know that every $x\in \R^n$ satisfies $\phi(\|x\|_2)x\in  B_X$. We can therefore define $F:\R^n\to Y$ by $F(x)=f(\phi(\|x\|_2)x)$. Then, $F(x)=f(x)$ if $\|x\|_2\le 1/\sqrt{n}$, and $F(x)=0$ if $\|x\|_2\ge (1+1/n)/\sqrt{n}$. Also, every $x,y\in \R^n$ with $\|y\|_X\le \|x\|_X$ satisfy
\begin{eqnarray*}
\nonumber\|F(x)-F(y)\|_Y&\le& \|f\|_{\Lip(B_X)}\big\|\phi(\|x\|_2)x-\phi(\|y\|_2)y\big\|_X\\&\le & \phi(\|x\|_2)\|x-y\|_X+\big|\phi(\|x\|_2)-\phi(\|y\|_2)\big|\|y\|_X\\&\stackrel{\eqref{eq:john posiiton} \wedge \eqref{eq:phi upper}}{\le}& \|x-y\|_X+
\big|\phi(\|x\|_2)-\|\phi(\|y\|_2)\big|\sqrt{n}\|y\|_2\\&\stackrel{\eqref{eq:phi lip}}{\le}&  \|x-y\|_X+\sqrt{n}(n+1)\|x-y\|_2\\&\stackrel{\eqref{eq:john posiiton}}{\le}& \left(n^{\frac32}+\sqrt{n}+1\right)\|x-y\|_X.
\end{eqnarray*}
Thus $\|F\|_{\Lip(\R^n)}\lesssim n^{\frac32}$. Since $\supp(F)\subset \left(1+\frac{1}{n}\right)\frac{1}{\sqrt{n}}B^n$, it therefore follows from Theorem~\ref{w1p dorronsoro} that
\begin{align}\label{eq:use doro F}
\nonumber\bigg(\iiint_{ \R^n\times B^n\times (0,\infty)}\frac{\left\|F^x(uy)-P_u^1F^x(uy)\right\|_Y^{\kappa\beta}}{V_n u^{1+\kappa\beta}}\dd x  \dd y \dd u\bigg)^{\frac{1}{\kappa\beta}}&\le K\beta^{15}n^{\frac52}\left(1+\frac{1}{n}\right)^{\frac{n}{\kappa\beta}}\left(\frac{V_n}{n^{\frac{n}{2}}}\right)^{\frac{1}{\kappa\beta}}n^{\frac32}\\&< eK \beta^{15}n^4\left(\frac{V_n}{n^{\frac{n}{2}}}\right)^{\frac{1}{\kappa\beta}},
\end{align}
where $K\in (0,\infty)$ is a universal constant. Note that
$$
(x,u)\in \left(\left(1-\frac{1}{n}\right)\frac{1}{\sqrt{n}}B^n\right)\times \left(0, \frac{1}{n^{3/2}}\right)\implies  x+uB^n\subset \frac{1}{\sqrt{n}} B^n.$$
Since $F$ coincides with $f$ on $\frac{1}{\sqrt{n}} B^n$, it follows that
$$
(x,y,u)\in \left(\left(1-\frac{1}{n}\right)\frac{1}{\sqrt{n}}B^n\right)\times B^n\times \left(0, \frac{1}{n^{3/2}}\right) \implies  P^1_uF^x=P^1_uf^x\quad\mathrm{and}\quad
 F^x(uy)=f^x(uy).$$
These observations in combination with~\eqref{eq:use doro F} imply that
\begin{equation}\label{eq:back to f}
\bigg(\iiint_{\left(\left(1-\frac{1}{n}\right)\frac{1}{\sqrt{n}}B^n\right)\times B^n\times \left(0, \frac{1}{n^{3/2}}\right)}\frac{\left\|f^x(uy)-P_u^1f^x(uy)\right\|_Y^{\kappa\beta}}{V_n u^{1+\kappa\beta}}\dd x  \dd y \dd u\bigg)^{\frac{1}{\kappa\beta}}< K\beta^{15}n^4\left(\frac{V_n}{n^{\frac{n}{2}}}\right)^{\frac{1}{\kappa\beta}}.
\end{equation}

Fix $M\in (1,\infty)$ whose precise value will be specified later. It follows from~\eqref{eq:back to f} that there exist
\begin{equation}\label{eq:u x choices}
u\in \left(\frac{1}{Mn^{2/3}},\frac1{n^{3/2}}\right)\qquad \mathrm{and}\qquad  x\in \left(1-\frac1{n}\right)\frac{1}{\sqrt{n}}B^n,
\end{equation}
such that
\begin{equation}\label{eq:log M}
\bigg(\frac{1}{V_n}\int_{B^n} \left\|f^x(uy)-P_u^1f^x(uy)\right\|_Y^{\kappa\beta}\dd y\bigg)^{\frac{1}{\kappa\beta}} \le \frac{2eK\beta^{15}n^4 u}{\left(\log M\right)^{\frac{1}{\kappa\beta}}}.
\end{equation}
Indeed, we would otherwise have
\begin{multline*}
\bigg(\iiint_{\left(\left(1-\frac{1}{n}\right)\frac{1}{\sqrt{n}}B^n\right)\times B^n\times \left(0, \frac{1}{n^{3/2}}\right)}\frac{\left\|f^x(uy)-P_u^1f^x(uy)\right\|_Y^{\kappa\beta}}{V_n u^{1+\kappa\beta}}\dd x  \dd y \dd u\bigg)^{\frac{1}{\kappa\beta}}\\> \left(1-\frac{1}{n}\right)^{\frac{n}{\kappa\beta}}\left(\frac{V_n}{n^{\frac{n}{2}}}\right)^{\frac{1}{\kappa\beta}}\left(\int_{\frac{n^{-3/2}}{M}}^{n^{-3/2}} \frac{\dd u}{u}\right)^{\frac{1}{\kappa\beta}}\frac{2eK\beta^{15}n^4 }{\left(\log M\right)^{\frac{1}{\kappa\beta}}}\ge eK\beta^{15}n^4\left(\frac{V_n}{n^{\frac{n}{2}}}\right)^{\frac{1}{\kappa\beta}},
\end{multline*}
thus contradicting~\eqref{eq:back to f}. Observe that by Lemma~\ref{lem Yu lip} we have
$$
\|P_u^1f^x\|_{\Lip(\R^n)}\stackrel{\eqref{eq:def P1}}{=}\|T_uf^x\|_{X\to Y}\le \|f^x\|_{\Lip(\R^n)}\le 1.
$$
Hence, if we set $\Lambda\eqdef P^1_xf^x$ then $\Lambda:\R^n\to Y$ is affine, $\|\Lambda\|_{\Lip(\R^n)}\le 1$, and  by a change of variable one can rewrite~\eqref{eq:log M} as follows.
\begin{equation}\label{eq:log M'}
\bigg(\frac{1}{\vol(x+uB^n)}\int_{x+uB^n} \left\|f(z)-\Lambda(z)\right\|_Y^{\kappa\beta}\dd z\bigg)^{\frac{1}{\kappa\beta}} \le \frac{2eK\beta^{15}n^4 u}{\left(\log M\right)^{\frac{1}{\kappa\beta}}}.
\end{equation}
This is not quite the type of conclusion that we desire, because the averaging in~\eqref{eq:log M'} occurs over a Euclidean ball rather than a ball in $(\R^n,\|\cdot\|_X)$. We overcome this via another averaging step.

Observe that
\begin{equation}\label{eq:mixed ball inclusion}
x+\left(1-\frac{1}{n}\right)uB^n+\frac{u}{n}B_X\subset x+uB^n\subset \frac{1}{\sqrt{n}}B^n\stackrel{\eqref{eq:john posiiton}}{\subset} B_X.
\end{equation}
Indeed, if $a,b\in \R^n$ satisfy $\|a\|_2\le (1-1/n)u$ and $\|b\|_X\le u/n$ then,
$$
\|a+b\|_2\le \|a\|_2+\|b\|_2\stackrel{\eqref{eq:john posiiton}}{\le} \|a\|_2+\|b\|_X\le \left(1-\frac{1}{n}\right)u+\frac{u}{n}=u,
$$
Now, define $A\subset \R^n\times \R^n$ as follows.
\begin{align*}
A&\eqdef\left\{(y,w):\ y\in x+\left(1-\frac{1}{n}\right)uB^n\ \wedge\ w\in y+\frac{u}{n}B_X\right\}\\
& =\left\{(y,w):\ w\in x+\left(1-\frac{1}{n}\right)uB^n+\frac{u}{n}B_X\ \wedge\ y\in
\left(x+\left(1-\frac{1}{n}\right)uB^n\right)\cap\left(w+\frac{u}{n}B_X\right)\right\}.
\end{align*}
By~\eqref{eq:mixed ball inclusion}, $f(w)$ is well-defined for every $(y,w)\in A$. It therefore follows from Fubini's theorem that
\begin{align*}
&\iint_A \|f(w)-\Lambda(w)\|_Y^{\kappa\beta}\dd w\dd y\\&=\int_{x+\left(1-\frac{1}{n}\right)u B^n} \dd y\int_{y+\frac{u}{n}B_X}\|f(w)-\Lambda(w)\|_Y^{\kappa\beta}\dd w\\&=
\int_{x+\left(1-\frac{1}{n}\right)u B^n+\frac{u}{n}B_X} \vol\left(\left(x+\left(1-\frac{1}{n}\right)uB^n\right)\cap\left(w+\frac{u}{n}B_X\right)\right)\|f(w)-\Lambda(w)\|_Y^{\kappa\beta}
\dd w\\&\le \vol\left(\frac{u}{n}B_X\right)\int_{x+uB^n} \|f(w)-\Lambda(w)\|_Y^{\kappa\beta}
\dd w.
\end{align*}
Hence,
\begin{multline*}
\frac{1}{\vol\left(x+\left(1-\frac{1}{n}\right)u B^n\right)}\int_{x+\left(1-\frac{1}{n}\right)u B^n} \frac{\dd y}{\vol\left(y+\frac{u}{n}B_X\right)}\int_{y+\frac{u}{n}B_X}\|f(w)-\Lambda(w)\|_Y^{\kappa\beta}\dd w\\
\le \frac{1}{\vol\left(x+\left(1-\frac{1}{n}\right)u B^n\right)} \int_{x+uB^n} \|f(w)-\Lambda(w)\|_Y^{\kappa\beta}
\dd w\le \frac{\left(2eK\beta^{15}n^4 u\right)^{\kappa\beta}}{\left(1-\frac{1}{n}\right)^n\log M}\le \frac{\left(4eK\beta^{15}n^4 u\right)^{\kappa\beta}}{\log M}.
\end{multline*}
This implies that there exists $y\in x+\left(1-\frac{1}{n}\right)u B^n$ such that
$$
\left( \frac{1}{\vol\left(y+\frac{u}{n}B_X\right)}\int_{y+\frac{u}{n}B_X}\|f(w)-\Lambda(w)\|_Y^{\kappa\beta}\dd w\right)^{\frac{1}{\kappa\beta}}\le \frac{4eK\beta^{15}n^4 u}{\left(\log M\right)^{\frac{1}{\kappa\beta}}}=\frac{4eK\beta^{15}n^5}{\left(\log M\right)^{\frac{1}{\kappa\beta}}}\cdot \frac{u}{n}.
$$
Recalling Definition~\eqref{def Lp modulus}, since by~\eqref{eq:u x choices} we are ensured that $u\ge 1/(Mn^{3/2})$, it follows that
\begin{equation}\label{eq:M version}
\forall\, M\in (1,\infty),\qquad r_{\kappa\beta}^{X\to Y}\left(\frac{4eK\beta^{15}n^5}{\left(\log M\right)^{\frac{1}{\kappa\beta}}}\right)\ge \frac{1}{Mn^{\frac32}}.
\end{equation}
For $\e\in (0,1)$ choose $M=e^{(4eK\beta^{15}n^5/\e)^{\kappa\beta}}$ in~\eqref{eq:M version}, thus yielding Theorem~\ref{thm:Lp UAAP intro} as follows.
\begin{equation*}
\forall\, \e\in (0,1),\qquad r_{\kappa\beta}^{X\to Y}(\e)\ge \frac{1}{n^{\frac32}}\exp\left(-\frac{(4eK\beta^{15}n^5)^{\kappa\beta}}{\e^{\kappa\beta}}\right).\tag*{\qed}
\end{equation*}

\section{Preliminaries on UMD Banach spaces}

This section is devoted to the presentation of several analytic properties of UMD Banach spaces that will be used extensively in the proof of Theorem~\ref{thm:Lp UAAP intro}.

Let $(Y,\|\cdot\|_Y)$ be a UMD Banach space and fix $p\in (1,\infty)$. Denote (as usual) by $\beta_p(Y)$ the infimum over those $\beta\in (1,\infty]$ such that  if
$\{M_j\}_{j=0}^\infty$ is a $Y$-valued $p$-integrable martingale
defined on some probability space $(\Omega,\P)$ then for every $n\in
\N$ and every $\e_1,\ldots,\e_n\in \{-1,1\}$ we have
\begin{equation}\label{eq:def UMDp}
\int_\Omega \Big\|M_0+\sum_{j=1}^n \e_j\left(M_j-M_{j-1}\right)\Big\|_Y^p\dd\P\le
\beta^p\int_\Omega \left\|M_n\right\|_Y^p\dd\P.
\end{equation}

Thus, using the notation of the Introduction, we have $\beta_2(Y)=\beta(Y)$. The following inequality is well-known; see~\cite{Bu86} for its proof.
\begin{equation}\label{eq:betap beta 2}
\beta_p(Y)\lesssim \frac{p^2}{p-1}\beta(Y).
\end{equation}
We also record for future use that~\eqref{eq:def UMDp} implies (see~\cite[Thm.~4.4]{LT91}) that for every $a_0,a_1,\ldots,a_n\in \R$,
\begin{equation}\label{eq:UMDp contraction}
\int_\Omega \Big\|a_0M_0+\sum_{j=1}^n a_j\left(M_j-M_{j-1}\right)\Big\|_Y^p\dd\P\le
\left(\max_{j\in \{0,\ldots,n\}}|a_j|^p\right)\beta_p(X)^p\int_\Omega \left\|M_n\right\|_Y^p\dd\P.
\end{equation}

In~\cite{Gar90} Garling introduced two parameters $\beta_p^+(Y), \beta_p^{-}(Y)$, defined to be the best constants in the following inequalities, which are required to hold true for every martingale $\{M_j\}_{j=0}^\infty$ as above.
\begin{equation*}\label{eq:def UMDp+}
\E_\e\bigg[\int_\Omega \Big\|M_0+\sum_{j=1}^n \e_n\left(M_j-M_{j-1}\right)\Big\|_Y^p\dd\P\bigg]\le
\beta_p^+(Y)^p\int_\Omega \left\|M_n\right\|_Y^p\dd\P,
\end{equation*}
and
\begin{equation*}\label{eq:def UMDp-}
\int_\Omega \left\|M_n\right\|_Y^p\dd\P\le \beta_p^-(Y)^p\E_\e\bigg[\int_\Omega \Big\|M_0+\sum_{j=1}^n \e_n\left(M_j-M_{j-1}\right)\Big\|_Y^p\dd\P\bigg],
\end{equation*}
where $\E_\e[\cdot]$ denotes the expectation with respect to $\e=(\e_1,\ldots,\e_n)$ chosen uniformly at random from $\{-1,1\}^n$. Garling's inequalities are weaker than~\eqref{eq:def UMDp}, which is required to hold for {\em every} $\e\in \{-1,1\}^n$ rather than only in expectation with respect to $\e$. Hence,
\begin{equation}\label{eq:trivial beta+ beta-}
\max\left\{\beta_p^+(Y),\beta_p^-(Y)\right\}\le \beta_p(Y),
\end{equation}
but there are examples of Banach spaces $Y$ for which $\beta_p^+(Y)$ or $\beta_p^-(Y)$ is markedly smaller than $\beta_p(Y)$; see~\cite{Gar90,Gei99}. Some of the ensuing estimates can be stated in terms of the parameters $\beta_p(Y), \beta_p^+(Y), \beta_p^-(Y)$, but in order to avoid cumbersome expressions we will sometimes state our bounds in terms of the quantity $\beta(Y):=\beta_2(Y)$, by invoking~\eqref{eq:betap beta 2} and \eqref{eq:trivial beta+ beta-}. In fact, in our setting we will always choose  $2\le p\asymp  \beta(Y)$, in which case by using~\eqref{eq:betap beta 2} and \eqref{eq:trivial beta+ beta-} we will sometimes bound from above the quantities $\beta_p(Y),\beta_p^+(Y), \beta_p^-(Y)$ by a universal constant multiple of $p\,\beta(Y)\asymp\beta(Y)^2$.
This choice has the advantage of simplifying some of the ensuing discussion, but it yields bounds that could be improved for some (quite exotic) UMD Banach spaces $Y$ by a straightforward inspection of our proofs.

\subsection{$\mathscr{R}$-boundedness} Let $(X,\|\cdot\|_X)$ and $(Y,\|\cdot\|_Y)$ be Banach spaces. The space of bounded linear operators from $X$ to $Y$ is denoted $\bddlin(X,Y)$. Following~\cite{BG:94,CPSW}, a set of operators $\mathcal{T}\subseteq\bddlin(X,Y)$ is said to be $\mathscr{R}$-bounded if
\begin{equation}\label{eq:def r bounded}
  \E_\e\bigg[\Big\|\sum_{j=1}^N\radem_j T_j x_j\Big\|_Y^p\bigg]
  \leq C^p\E_\e\bigg[\Big\|\sum_{j=1}^N\radem_j x_j\Big\|_X^p\bigg],
\end{equation}
for every $N\in\N$, every $x_1,\ldots, x_N\in Y$ and every $T_1,\ldots, T_N\in\mathcal{T}$, and some (equivalently, by Kahane's inequality~\cite{Kah85}, for all) $p\in[1,\infty)$. The infimum over those $C$ in~\eqref{eq:def r bounded} is denoted $\mathscr{R}_p(\mathcal{T})$.

The following result is due to Bourgain~\cite{Bourgain:86}; see also \cite{FigWoj} for a proof.
\begin{proposition}[Bourgain's vector-valued Stein inequality]\label{prop:Stein}
Suppose that $(Y,\|\cdot\|_Y)$ is a UMD Banach space and fix $p\in(1,\infty)$. Let $\{\mathscr{F}_j\}_{j\in \Z}$ be an increasing sequence of sub-$\sigma$-algebras on some probability space $(\Omega,\mathscr{F},\P)$. For every $j\in \Z$ let $\mathscr{E}_j\in \bddlin(L_p(\P,Y),L_p(\P,Y))$ be the conditional expectation operator corresponding to $\mathscr{F}_j$, i.e., $\mathscr{E}_jf=\E[f|\mathscr{F}_j]$ for every $f\in L_p(\P,Y)$. Then
$$
\mathscr{R}_p\left(\{\mathscr{E}_j\}_{j\in \Z}\right)\le \beta_p^+(Y).
$$
\end{proposition}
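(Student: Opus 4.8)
The plan is to exhibit a randomly signed sum $\sum_k\e_k\mathscr{E}_{j_k}f_k$ of conditional expectations as —- up to averaging over auxiliary Rademacher signs -— a sub-sum of the martingale differences of a single cleverly interleaved martingale whose terminal value is dominated by $\sum_k\e_kf_k$, and then to quote Garling's inequality defining $\beta_p^+(Y)$.

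First I would make the only needed reduction. By the definition of $\mathscr{R}_p$ it suffices to bound, for every $N\in\N$, all $f_1,\dots,f_N\in L_p(\P,Y)$ and all $j_1,\dots,j_N\in\Z$, the quantity $\E_\e\big\|\sum_{k=1}^N\e_k\mathscr{E}_{j_k}f_k\big\|_{L_p(\P,Y)}^p$ by $\beta_p^+(Y)^p\E_\e\big\|\sum_{k=1}^N\e_kf_k\big\|_{L_p(\P,Y)}^p$, where $\e_1,\dots,\e_N$ are independent uniform $\{-1,1\}$-valued. Since the $\e_k$ are exchangeable, permuting the indices (and relabelling the $f_k$ accordingly) lets me assume $j_1\le\cdots\le j_N$, so that $\mathscr{F}_{j_1}\subseteq\cdots\subseteq\mathscr{F}_{j_N}$. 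Passing to $(\bar\Omega,\bar\P)\eqdef(\Omega\times\{-1,1\}^N,\P\otimes\nu)$ with $\nu$ uniform, and setting $\Phi\eqdef\sum_k\e_kf_k$ and $\Psi\eqdef\sum_k\e_k\mathscr{E}_{j_k}f_k$ in $L_p(\bar\P,Y)$, the goal becomes $\|\Psi\|_{L_p(\bar\P,Y)}\le\beta_p^+(Y)\|\Phi\|_{L_p(\bar\P,Y)}$.

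The heart of the argument is the following martingale on $(\bar\Omega,\bar\P)$: in round $k=1,\dots,N$ first reveal the information of $\mathscr{F}_{j_k}$ over $\mathscr{F}_{j_{k-1}}$ and then reveal $\e_k$; concretely $\mathscr{G}_{k-1/2}\eqdef\mathscr{F}_{j_k}\otimes\sigma(\e_1,\dots,\e_{k-1})$ and $\mathscr{G}_k\eqdef\mathscr{F}_{j_k}\otimes\sigma(\e_1,\dots,\e_k)$ (with $\mathscr{G}_0$ trivial), which is an increasing chain because the $\mathscr{F}_{j_k}$ increase; let $M$ be the martingale obtained by conditioning $\Phi$ along it. Using that $\e_\ell$ is centred and independent of $\mathscr{F}_{j_k}$ and of $\e_1,\dots,\e_{k-1}$, one computes $\E[\Phi\mid\mathscr{G}_k]=\sum_{\ell\le k}\e_\ell\mathscr{E}_{j_k}f_\ell$ and $\E[\Phi\mid\mathscr{G}_{k-1/2}]=\sum_{\ell\le k-1}\e_\ell\mathscr{E}_{j_k}f_\ell$, so the martingale differences of $M$ split into two families: $d_k^{(1)}=\sum_{\ell=1}^{k-1}\e_\ell(\mathscr{E}_{j_k}-\mathscr{E}_{j_{k-1}})f_\ell$, coming from revealing $\mathscr{F}_{j_k}$, and $d_k^{(2)}=\e_k\mathscr{E}_{j_k}f_k$, coming from revealing $\e_k$. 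The formula for $d_k^{(2)}$ gives the identity $\Psi=\sum_kd_k^{(2)}$, while $M_{\mathrm{last}}=\E[\Phi\mid\mathscr{G}_N]$ gives $\|M_{\mathrm{last}}\|_{L_p(\bar\P,Y)}\le\|\Phi\|_{L_p(\bar\P,Y)}$; thus $\Psi$ is exactly the sub-sum of the martingale differences of $M$ that keeps only the sign-revealing ones.

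To finish I would upgrade this sub-sum to a fully randomly signed sum, which is what produces the sharp constant $\beta_p^+(Y)$ rather than merely $\beta_p(Y)$. Since the coefficients $\mathscr{E}_{j_k}f_k$ of $\Psi=\sum_k\e_k\mathscr{E}_{j_k}f_k$ do not involve $(\e_k)$, Rademacher symmetry gives $\|\Psi\|_{L_p(\bar\P,Y)}^p=\E_\eta\big\|\sum_k\eta_kd_k^{(2)}\big\|_{L_p(\bar\P,Y)}^p$ for a fresh independent uniform family $(\eta_k)_{k=1}^N$; introducing another independent uniform family $(\eta'_k)_{k=1}^N$, using $\E_{\eta'}\big[\sum_k\eta'_kd_k^{(1)}\big]=0$ and Jensen's inequality yields $\|\Psi\|_{L_p(\bar\P,Y)}^p\le\E_\eta\E_{\eta'}\big\|\sum_k(\eta'_kd_k^{(1)}+\eta_kd_k^{(2)})\big\|_{L_p(\bar\P,Y)}^p$. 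The right-hand side is precisely the left-hand side of Garling's inequality defining $\beta_p^+(Y)$, applied to the $Y$-valued $p$-integrable martingale $M$ on $(\bar\Omega,\bar\P)$, whose successive differences are $d_1^{(1)},d_1^{(2)},d_2^{(1)},d_2^{(2)},\dots$ and whose $2N$ signs are averaged over $\{-1,1\}^{2N}$; hence it is at most $\beta_p^+(Y)^p\|M_{\mathrm{last}}\|_{L_p(\bar\P,Y)}^p\le\beta_p^+(Y)^p\|\Phi\|_{L_p(\bar\P,Y)}^p$, and taking $p$-th roots gives $\mathscr{R}_p(\{\mathscr{E}_j\}_{j\in\Z})\le\beta_p^+(Y)$. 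The points that need care are the conditional-expectation formulas for $\E[\Phi\mid\mathscr{G}_k]$ and $\E[\Phi\mid\mathscr{G}_{k-1/2}]$ (from which the two families of differences and the identity $\Psi=\sum_kd_k^{(2)}$ follow), and the Rademacher-symmetry manoeuvre of the last paragraph, which is the only genuinely nonobvious step and the reason the constant is $\beta_p^+(Y)$ rather than $\beta_p(Y)$.
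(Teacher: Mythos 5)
Your proof is correct. The paper does not actually prove this proposition (it only cites Bourgain's original argument and the exposition of Figiel--Wojtaszczyk), and your argument --- sorting the indices, passing to the product space, interlacing the filtration $\mathscr{F}_{j_k}\otimes\sigma(\e_1,\dots,\e_{k-1})\subseteq\mathscr{F}_{j_k}\otimes\sigma(\e_1,\dots,\e_k)$ so that $\sum_k\e_k\mathscr{E}_{j_k}f_k$ becomes the sign-revealing sub-sum of the martingale differences of $\E[\sum_k\e_kf_k\mid\cdot]$, and then invoking Rademacher symmetry, Jensen, and the randomized UMD$^+$ inequality --- is precisely the standard proof from those references, with all the delicate points (the conditional-expectation identities, $M_0=0$, and the symmetrization that yields $\beta_p^+(Y)$ rather than $\beta_p(Y)$) handled correctly.
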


By a classical representation theorem for positive self-adjoint semigroups of contractions due to Rota~\cite{Rota} (see also~\cite[Sec.~VI]{Ste70} or~\cite[Thm.~2.5]{MTX06}), Proposition~\ref{prop:Stein} implies the following dimension independent $\mathscr{R}$-boundedness estimate for the heat semigroup. Here and in what follows, $\Delta$ denotes the Laplacian on $\R^n$.

\begin{corollary}[$\mathscr{R}$-boundedness of the heat semigroup]\label{cor:heat} Let $(Y,\|\cdot\|_Y)$ be a UMD Banach space, $n\in \N$, and $p\in (1,\infty)$. Then
$$
\mathscr{R}_p\left(\{e^{t\Delta}\}_{t\in (0,\infty)}\right)\le \beta_p^+(Y).
$$
\end{corollary}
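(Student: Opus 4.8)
The plan is to transfer Proposition~\ref{prop:Stein} to the heat semigroup via Rota's dilation theorem for symmetric diffusion semigroups. First I would unwind the definition of $\mathscr{R}$-boundedness and note that, because $\mathscr{R}$-boundedness of a \emph{set} of operators does not depend on how the set is indexed and because $\{2t:t>0\}=(0,\infty)$, it suffices to prove that for every $N\in\N$, all $t_1,\ldots,t_N\in(0,\infty)$ and all $f_1,\ldots,f_N\in L_p(\R^n,Y)$,
$$
\E_\radem\Big\|\sum_{j=1}^N\radem_j e^{2t_j\Delta}f_j\Big\|_{L_p(\R^n,Y)}^p\le\beta_p^+(Y)^p\,\E_\radem\Big\|\sum_{j=1}^N\radem_j f_j\Big\|_{L_p(\R^n,Y)}^p .
$$

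Next I would invoke Rota's theorem~\cite{Rota} (see also~\cite[Sec.~VI]{Ste70} and~\cite[Thm.~2.5]{MTX06}): the heat semigroup is a symmetric Markovian semigroup on $L_2(\R^n)$, since the Gaussian heat kernel is positive, even, and has integral $1$. Realizing it through its associated stationary \emph{reversible} Markov process on a path space $(\Omega,\mathbb{P})$ whose one-dimensional marginal is Lebesgue measure, and writing $\mathscr{F}_t\eqdef\sigma(\omega_s:s\ge t)$, one obtains a $t$-independent isometric embedding $J:L_p(\R^n)\to L_p(\Omega)$, $Jf(\omega)\eqdef f(\omega_0)$, its adjoint $Q\eqdef J^*$ (a positive contraction $L_p(\Omega)\to L_p(\R^n)$), and the conditional expectations $\mathscr{E}_t$ onto $\mathscr{F}_t$, such that $e^{2t\Delta}=Q\,\mathscr{E}_t\,J$ for every $t>0$. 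Crucially, $J$ and $Q$ are independent of $t$, and the family $\{\mathscr{F}_t\}_{t>0}$ is decreasing, hence totally ordered by inclusion. Since $J$, $Q$ and the $\mathscr{E}_t$ are composition operators and conditional expectations, they extend to $Y$-valued functions with unchanged norms and the factorization persists on $L_p(\,\cdot\,,Y)$.

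The rest is then a formal chain of inequalities: by the factorization and $\|Q\|_{L_p(\Omega,Y)\to L_p(\R^n,Y)}\le1$,
$$
\E_\radem\Big\|\sum_j\radem_j e^{2t_j\Delta}f_j\Big\|_{L_p(\R^n,Y)}^p\le\E_\radem\Big\|\sum_j\radem_j\mathscr{E}_{t_j}(Jf_j)\Big\|_{L_p(\Omega,Y)}^p ;
$$
after reindexing the $t_j$ so that $\mathscr{F}_{t_1}\subseteq\cdots\subseteq\mathscr{F}_{t_N}$ and padding this finite chain to a two-sided increasing sequence, Proposition~\ref{prop:Stein} bounds the right-hand side by $\beta_p^+(Y)^p\,\E_\radem\|\sum_j\radem_j Jf_j\|_{L_p(\Omega,Y)}^p$, and this equals $\beta_p^+(Y)^p\,\E_\radem\|\sum_j\radem_j f_j\|_{L_p(\R^n,Y)}^p$ because $J$ is a linear isometry that commutes with the Rademacher sum. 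This is the desired inequality, so $\mathscr{R}_p(\{e^{t\Delta}\}_{t>0})\le\beta_p^+(Y)$.

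The only genuinely non-formal step — and the one I expect to require the most care — is the precise invocation of Rota's theorem: one must use the version producing a \emph{single} auxiliary space, a \emph{single} intertwining pair $J,Q$, and a \emph{monotone} family of $\sigma$-algebras simultaneously representing all the operators $e^{2t\Delta}$ (which is exactly what the reversible-Markov-process model supplies), and one must account for the fact that Lebesgue measure on $\R^n$ is merely $\sigma$-finite rather than finite. The latter is handled either by the $\sigma$-finite formulation of Rota's dilation or by exhausting $\R^n$ by sets of finite measure, using that Proposition~\ref{prop:Stein}, and hence the bound $\mathscr{R}_p(\{\mathscr{E}_t\})\le\beta_p^+(Y)$ for conditional expectations, is equally valid on $L_p$ of a $\sigma$-finite measure space; these measure-theoretic points are carried out in detail in~\cite[Thm.~2.5]{MTX06}.
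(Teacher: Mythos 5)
Your argument is correct and rests on the same two pillars as the paper's proof (Rota's dilation theorem plus Proposition~\ref{prop:Stein}), but the execution is genuinely different in how the continuum of times is handled. The paper applies the classical \emph{discrete} form of Rota's theorem to the single operator $Q=e^{\frac12\delta\Delta}$, obtaining the factorization $e^{j\delta\Delta}=Q^{2j}=J^{-1}\mathscr{E}'\mathscr{E}_{-j}J$ only for $j\in\N\cup\{0\}$; the bound $\mathscr R_p(\{e^{j\delta\Delta}\}_{j\in\N\cup\{0\}})\le\beta_p^+(Y)$ then follows exactly as in your formal chain, and the full semigroup is recovered in a separate step by taking $\delta=2^{-k}$, using that $\mathscr R$-bounds are stable under increasing unions and under closure in the strong operator topology together with strong continuity of $t\mapsto e^{t\Delta}$. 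You instead invoke a \emph{continuous-parameter} dilation via the stationary reversible Markov process (two-sided Brownian motion with Lebesgue invariant measure), which represents all of $\{e^{2t\Delta}\}_{t>0}$ simultaneously through one monotone family $\{\mathscr F_t\}$ and so makes the closure step unnecessary. What the paper's route buys is that it only needs the dilation theorem in the precise discrete form stated in the cited references ([Ste70, Sec.~VI], [MTX06, Thm.~2.5]), at the price of the extra approximation lemma on $\mathscr R$-bounds; what your route buys is a one-shot argument, at the price of having to justify the continuous-time, $\sigma$-finite version of the dilation (the two-sided path-space construction with infinite invariant measure, and the validity of Proposition~\ref{prop:Stein} over a $\sigma$-finite base), which you correctly identify as the delicate point and which is indeed where the real work hides. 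Both are sound; if you want to match the level of black-boxing in the paper, the discrete-plus-closure route is slightly safer because every ingredient is quoted verbatim from the literature.
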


\begin{proof}
For any $\delta>0$, the operator $Q=e^{\frac12\delta\Delta}$ satisfies the assumptions of Rota's theorem as formulated in \cite[Sec.~VI]{Ste70} or~\cite[Thm.~2.5]{MTX06}. Thus there exists a measure space $(\Omega,\mathscr F,\mu)$ with an increasing sequence of sub-$\sigma$-algebras $\{\mathscr{F}_j\}_{j\in \Z}$ and yet another sub-$\sigma$-algebra ${\mathscr F}'$ with the corresponding conditional expectation operators $\mathscr E_j$ and ${\mathscr E}'$ such that
\begin{equation*}
\forall\, j\in \N\cup\{0\},\qquad   e^{j\delta\Delta}=Q^{2j}=J^{-1}{\mathscr E}'\mathscr E_{-j}J,
\end{equation*}
where $J:L_p(\R^n,Y)\to L_p(\Omega,{\mathscr F}',Y)$ is an isometric isomorphism. Thus
\begin{align*}
  &\mathscr R_p\big(\{e^{j\delta\Delta}\}_{j\in\N\cup\{0\}}\big)\\
  &=\mathscr R_p\big(\{J{\mathscr E}'\mathscr E_{-j}J\}_{j\in\N\cup\{0\}}\big)
 \\ &\leq\|J^{-1}\|_{ L_p(\Omega,{\mathscr F}',Y)\to L_p(\R^n,Y)}\cdot \|{\mathscr E}'\|_{L_p(\Omega,\mathscr{F},\mu)\to L_p(\Omega,\mathscr{F'},\mu)}\cdot \mathscr R_p\big(\{\mathscr E_{-j}\}_{j\in\N\cup\{0\}}\big)\cdot \|J\|_{L_p(\R^n,Y)\to L_p(\Omega,{\mathscr F}',Y)}
 \\&\leq\beta_+(Y),
\end{align*}
where we used Proposition \ref{prop:Stein} together with easy properties of $\mathscr R$-bounds and the contractivity of conditional expectations.

Using basic properties of $\mathscr R$-bounds (cf. \cite[Proposition 9.5]{vNee07}), if $\mathscr T=\bigcup_{k=1}^\infty\mathscr T_k$ is a union of an increasing sequence $\mathscr T_k\subset\mathscr T_{k+1}$, and $\overline{\mathscr T}$ is the closure of $\mathscr T$ in the strong operator topology, then $\mathscr R_p(\overline{\mathscr T})=\mathscr R_p(\mathscr T)=\lim_{k\to\infty}\mathscr R_p(\mathscr T_k)$. With $\mathscr T_k=\{e^{j2^{-k}\Delta}\}_{j \in\N\cup \{0\}}$, we have $\overline{\mathscr T}=\{e^{t\Delta}\}_{t\in(0,\infty)}$ by the strong continuity of $t\mapsto e^{t\Delta}$, and this proves the claim.
\end{proof}

Our next goal is to prove Theorem~\ref{thm:A representation} below, which is a useful bound on the norm of  operators on UMD Banach spaces that admit a certain integral representation in terms of the heat semigroup.   Results in this spirit have been implicitly used for a long time; see for examples the probabilistic treatment of the Riesz transforms by Gundy and Varopoulos~\cite{GV:79}, and of the Beurling--Ahlfors transform by Ba\~nuelos and M\'endez-Hern\'andez~\cite{BM:03}. Formulations in the UMD-valued setting appear in \cite{Hytonen:aspects,Hytonen:LPS,Hytonen:BA}. The version below is essentially a combination of some of these earlier results but it does not appear as stated in the literature, so its proof is included here.

In what follows, we will use some aspects of the theory of vector-valued stochastic integration with respect to a Brownian motion $\{B(t)=(B_1(t),\ldots,B_n(t))\}_{t\in (0,\infty)}$ in $\R^n$, starting at $0$. Here and below, a Brownian motion in $\R^n$ is always understood to be a {\em standard} Brownian motion.


It should be noted that for
our purposes it is enough to consider finite-dimensional-valued
functions, in which case the stochastic integrals can be defined
coordinate-wise in the classical sense.
We refer to \cite[Chapter 3]{KS} for the relevant background (and much more).
It might be helpful to note that the formulae in \cite{KS}, which often involve the \emph{quadratic variation} $\langle M,N\rangle_t$ of two stochastic processes $M$ and $N$, take a simpler form in the Brownian case of our interest, by using the identities $\langle B_i,B_j\rangle_t=\delta_{ij}t$ (see \cite[Theorem 3.3.16]{KS}).

In particular, It\^o's formula (see \cite[Theorem 3.3.6]{KS}) is valid in our setting, since it holds true for each
scalar-valued coordinate function. For a comprehensive theory of vector-valued stochastic
integration, whose full strength is not needed here, see~\cite{NW05,NW07}.

Before stating Theorem~\ref{thm:A representation} we describe some preliminary background and simple estimates that will be used in its proof. First,  we recall the following decoupling inequalities due to Garling~\cite{Garling:86}. 

\begin{theorem}[Garling's decoupling inequalities]\label{thm:garling decoupling} For $n\in \N$, let $\{B(t)=(B_1(t),\ldots,B_n(t))\}_{t\in (0,\infty)}$ be a Brownian motion in $\R^n$, starting at $0$. Also, let $\{C(t)=(C_1(t),\ldots,C_n(t))\}_{t\in (0,\infty)}$ be an independent copy of $\{B(t)\}_{t\in (0,\infty)}$. Suppose that  $(Y,\|\cdot\|_Y)$ is a UMD Banach space and $p\in (1,\infty)$. Let $V=(V_1,\ldots,V_n):(0,\infty)\to Y^n$ be a stochastic process that is adapted to the same filtration as $\{B(t))\}_{t\in (0,\infty)}$, takes values in a finite-dimensional subspace of $Y^n$, and satisfies
\begin{equation}\label{eq:integrability hypothesis}
  \E\bigg[\Big(\int_0^\infty \sum_{j=1}^n \|V_j(t)\|_Y^2 \dd t\Big)^{\frac{p}{2}}\bigg]<\infty
\end{equation}
The finite dimensionality assumption and the integrability assumption~\eqref{eq:integrability hypothesis} guarantee the existence of the stochastic integrals below by the scalar-valued theory. Then
$$
\E\bigg[\Big\|\int_0^\infty \sum_{j=1}^n V_j(t)\dd B_j(t)\Big\|_Y^p\bigg]\le \beta_p^-(Y)^p \E\bigg[\Big\|\int_0^\infty \sum_{j=1}^n V_j(t)\dd C_j(t)\Big\|_Y^p\bigg],
$$
and
$$
\E\bigg[\Big\|\int_0^\infty \sum_{j=1}^n V_j(t)\dd C_j(t)\Big\|_Y^p\bigg]\le \beta_p^+(Y)^p\E\bigg[\Big\|\int_0^\infty \sum_{j=1}^n V_j(t)\dd B_j(t)\Big\|_Y^p\bigg].
$$
\end{theorem}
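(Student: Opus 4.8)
The plan is to discretize the stochastic integrals, identify the resulting sums as a $Y$-valued martingale difference sequence together with its decoupled tangent sequence, and then invoke the discrete decoupling inequalities in UMD spaces, which produce exactly the constants $\beta_p^-(Y)$ and $\beta_p^+(Y)$. For the first step one reduces to \emph{simple} adapted processes: since $V$ takes values in a fixed finite-dimensional subspace $Z\subseteq Y^n$, it may be approximated in the norm $\big(\E\big[\big(\int_0^\infty\sum_{j=1}^n\|V_j(t)\|_Y^2\,\dd t\big)^{p/2}\big]\big)^{1/p}$, which is finite by~\eqref{eq:integrability hypothesis}, by processes $V(t)=\sum_{k=1}^N \1_{(t_{k-1},t_k]}(t)\,V_{k-1}$ with each $V_{k-1}$ an $\mathscr F_{t_{k-1}}$-measurable simple function (the integrability hypothesis also controls the tail at $+\infty$, so the cutoff is harmless). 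Expanding in a basis of $Z$, the continuity in $V$ of each of the four stochastic integrals, as maps into $L^p(\Omega;Y)$, reduces coordinatewise to the classical scalar-valued construction; hence it suffices to prove the two inequalities for simple $V$.

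For such $V$ the integrals become finite sums: with $d_k\eqdef\sum_{j=1}^n V_{j,k-1}\big(B_j(t_k)-B_j(t_{k-1})\big)$ and $e_k\eqdef\sum_{j=1}^n V_{j,k-1}\big(C_j(t_k)-C_j(t_{k-1})\big)$ one has $\int_0^\infty\sum_j V_j\,\dd B_j=\sum_{k=1}^N d_k$ and $\int_0^\infty\sum_j V_j\,\dd C_j=\sum_{k=1}^N e_k$. Now $(d_k)_{k=1}^N$ is a $Y$-valued martingale difference sequence with respect to $(\mathscr F_{t_k})_{k=1}^N$, since $B(t_k)-B(t_{k-1})$ is centered and independent of $\mathscr F_{t_{k-1}}$, which contains $V_{k-1}$. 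And $(e_k)_{k=1}^N$ is a \emph{decoupled tangent sequence} of $(d_k)$: conditionally on $\mathscr F_{t_{k-1}}$ the $Y$-valued Gaussian vectors $d_k$ and $e_k$ have the same law (both increments are $N\big(0,(t_k-t_{k-1})I_n\big)$ and independent of $\mathscr F_{t_{k-1}}$), while conditionally on the $\sigma$-algebra $\mathscr G$ generated by the entire path of $B$ — which determines all the $V_{k-1}$ — the $e_k$ are independent, being linear images of the disjoint, $\mathscr G$-independent increments of $C$.

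It then remains to compare $\E\|\sum_{k=1}^N d_k\|_Y^p$ with $\E\|\sum_{k=1}^N e_k\|_Y^p$, and this is precisely the discrete decoupling inequality for tangent sequences in a UMD space: it gives $\E\|\sum_k d_k\|_Y^p\le\beta_p^-(Y)^p\,\E\|\sum_k e_k\|_Y^p$ together with the reverse bound carrying $\beta_p^+(Y)^p$. This step is the heart of Garling's theorem and the main obstacle; I would cite it (Garling~\cite{Garling:86}, with sharp-constant forms of decoupling for tangent sequences also present in work of McConnell, Hitczenko, and Kwap\'ie\'n--Woyczy\'nski) rather than reprove it. Its proof uses the randomized characterization of UMD spaces through the constants $\beta_p^\pm(Y)$: one exploits that $\E\|\sum_k e_k\|_Y^p=\E_\e\E\|\sum_k\e_k e_k\|_Y^p$, which holds because $(e_k)$ is conditionally symmetric and conditionally independent given $\mathscr G$, and then runs a martingale-transform argument on an enlarged probability space — interleaving the $d_k$'s and $e_k$'s in a way that keeps each inserted term a genuine difference, and applying the contraction inequality~\eqref{eq:UMDp contraction} — that relates the randomized $d$-sum to the randomized $e$-sum by means of the tangency relation. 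Apart from this discrete inequality, the only subtlety is the vector-valued approximation of $V$ in the first step, which is innocuous because the range of $V$ is finite-dimensional and the scalar-valued stochastic calculus therefore applies.
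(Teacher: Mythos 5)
The paper itself offers no proof of this statement: it is recalled verbatim as a known result of Garling~\cite{Garling:86}. Your outline --- discretizing to simple adapted processes, recognizing the resulting sums as a $Y$-valued martingale difference sequence together with its decoupled, conditionally Gaussian tangent sequence, and then citing the discrete decoupling inequality whose constants are exactly $\beta_p^{-}(Y)$ and $\beta_p^{+}(Y)$ --- is a correct account of the standard derivation and defers precisely the same core inequality to the same source, so it is essentially the paper's (implicit) approach.
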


Continuing with the notation of Theorem~\ref{thm:garling decoupling}, for every (operator-valued) measurable\footnote{Here, and in what follows, given two Banach spaces $(X,\|\cdot\|_X)$ and $(Y,\|\cdot\|_Y)$ and an open subset $\Omega\subset \R^n$, when we say that an operator-valued mapping $\Phi:\Omega\to \mathscr{L}(X,Y)$ is measurable we mean measurability in the strong operator topology, i.e., we require that for every $x\in X$ the mapping $w\mapsto \Phi(w)x$ from $\Omega$ to $Y$ has the property that the inverse image of every Borel subset of $Y$ is Lebesgue-measurable.} mapping $\Phi:(0,\infty)\to \mathscr{L}(Y,Y)$ we have
\begin{equation}\label{eq:R-boundedness stochastic integral}
\E\bigg[\Big\|\int_0^\infty \sum_{j=1}^n \Phi(t)V_j(t)\dd C_j(t)\Big\|_Y^p\bigg]\le \mathscr{R}_p
\left(\Phi\right)^p\E\bigg[\Big\|\int_0^\infty \sum_{j=1}^n V_j(t)\dd C_j(t)\Big\|_Y^p\bigg],
\end{equation}
where we use the notation
\begin{equation}\label{eq:R_p(A) notation}
\mathscr{R}_p(\Phi)\eqdef \mathscr{R}_p
\left(\left\{\Phi(t)\right\}_{t\in (0,\infty)}\right).
\end{equation}
The estimate~\eqref{eq:R-boundedness stochastic integral} follows directly from the definition of $\mathscr{R}$-boundedness by approximating the integrals by Riemann sums; see Exercise~4 in Section~9.4 of~\cite{vNee07}. Alternatively, inequality~\eqref{eq:R-boundedness stochastic integral} follows by combining Theorem~6.14 and Theorem~9.13 of~\cite{vNee07}. By~\eqref{eq:R-boundedness stochastic integral} and Theorem~\ref{thm:garling decoupling} we see that
 \begin{equation}\label{eq:R-boundedness stochastic integral-coupled}
\E\bigg[\Big\|\int_0^\infty \sum_{j=1}^n \Phi(t)V_j(t)\dd B_j(t)\Big\|_Y^p\bigg]\le \beta_p^+(Y)^p\beta_p^-(Y)^p\mathscr{R}_p(\Phi)^p
\E\bigg[\Big\|\int_0^\infty \sum_{j=1}^n V_j(t)\dd B_j(t)\Big\|_Y^p\bigg].
\end{equation}

 In the same vein as the above discussion, by approximating the integrals by Riemann sums it follows from~\eqref{eq:UMDp contraction} that if  $\phi:(0,\infty)\to \R$ is measurable then
\begin{equation}\label{eq:contraction stochastic integral}
\E\bigg[\Big\|\int_0^\infty \sum_{j=1}^n \phi(t)V_j(t)\dd B_j(t)\Big\|_Y^p\bigg]\\\le \beta_p(Y)^p\|\phi\|_{L_\infty(0,\infty)}^p
\E\bigg[\Big\|\int_0^\infty \sum_{j=1}^n V_j(t)\dd B_j(t)\Big\|_Y^p\bigg].
\end{equation}

We record for future use the following simple estimate.

\begin{lemma}\label{lem:use ito}
Fix $n\in \N$ and let $\{B(t)=(B_1(t),\ldots,B_n(t))\}_{t\in [0,\infty)}$ be a Brownian motion in $\R^n$, starting at $0$. For every Banach space $(Y,\|\cdot\|_Y)$, every  $p\in (1,\infty)$ and every smooth and compactly supported $h:\R^n\to Y$ with a finite-dimensional range, we have
\begin{equation}\label{eq:limsup}
\limsup_{\tau\to \infty} \bigg(\int_{\R^n}\E\bigg[\Big\|\sum_{j=1}^n \int_0^\tau \frac{\partial}{\partial x_j} e^{\frac{\tau-t}{2}\Delta}h(B(t)+x)\dd B_j(t) \Big\|_Y^p\bigg]\dd x\bigg)^{\frac{1}{p}}\le \|h\|_{L_p(\R^n,Y)}.
\end{equation}
\end{lemma}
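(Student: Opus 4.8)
The plan is to recognize the stochastic integral in \eqref{eq:limsup} as a martingale increment obtained from It\^o's formula applied to the $Y$-valued process $t\mapsto e^{\frac{\tau-t}{2}\Delta}h(B(t)+x)$, and then to estimate its $L_p$ norm by the triangle inequality together with the translation invariance of Lebesgue measure and the decay of the heat semigroup. Concretely, fix $\tau>0$ and $x\in\R^n$ and set $u(t,y)\eqdef e^{\frac{\tau-t}{2}\Delta}h(y)$ for $(t,y)\in[0,\tau]\times\R^n$. Since $h$ is smooth and compactly supported, $u$ is smooth on $[0,\tau]\times\R^n$ with rapidly decaying spatial derivatives, and it solves the backward heat equation $\partial_t u+\tfrac12\Delta u=0$. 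Because $h$ (hence $u$) takes values in a fixed finite-dimensional subspace of $Y$, It\^o's formula may be applied coordinate-wise exactly as noted in the discussion preceding the statement; the drift terms $\partial_t u+\tfrac12\Delta u$ cancel, so, using $B(0)=0$, $u(\tau,\cdot)=h$ and $u(0,\cdot)=e^{\frac{\tau}{2}\Delta}h$, one gets the pointwise (in $x$) identity
\begin{equation}\label{eq:ito identity}
\sum_{j=1}^n\int_0^\tau \frac{\partial}{\partial x_j}e^{\frac{\tau-t}{2}\Delta}h(B(t)+x)\,\dd B_j(t)=h(B(\tau)+x)-e^{\frac{\tau}{2}\Delta}h(x).
\end{equation}

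Next I would raise \eqref{eq:ito identity} to the $p$-th power, take expectation, integrate over $x\in\R^n$, and apply the triangle inequality in $L_p(\Omega\times\R^n,Y)$ to obtain
\begin{equation*}
\bigg(\int_{\R^n}\E\Big[\big\|h(B(\tau)+x)-e^{\frac{\tau}{2}\Delta}h(x)\big\|_Y^p\Big]\dd x\bigg)^{\frac1p}\le \bigg(\int_{\R^n}\E\big[\|h(B(\tau)+x)\|_Y^p\big]\dd x\bigg)^{\frac1p}+\big\|e^{\frac{\tau}{2}\Delta}h\big\|_{L_p(\R^n,Y)}.
\end{equation*}
For the first term on the right, Fubini's theorem and the translation invariance of Lebesgue measure give $\int_{\R^n}\E[\|h(B(\tau)+x)\|_Y^p]\dd x=\E\big[\int_{\R^n}\|h(x)\|_Y^p\dd x\big]=\|h\|_{L_p(\R^n,Y)}^p$, independently of $\tau$.

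It then remains to check that $\|e^{\frac{\tau}{2}\Delta}h\|_{L_p(\R^n,Y)}\to 0$ as $\tau\to\infty$. Since $h$ is compactly supported it lies in $L_1(\R^n,Y)$, and writing $e^{\frac{\tau}{2}\Delta}h=G_\tau * h$ with $G_\tau$ the (scalar) Gaussian kernel of $e^{\frac{\tau}{2}\Delta}$, one has $\|e^{\frac{\tau}{2}\Delta}h\|_{L_\infty(\R^n,Y)}\le\|G_\tau\|_{L_\infty(\R^n)}\|h\|_{L_1(\R^n,Y)}\to 0$ and $\|e^{\frac{\tau}{2}\Delta}h\|_{L_1(\R^n,Y)}\le\|h\|_{L_1(\R^n,Y)}$; interpolating, $\|e^{\frac{\tau}{2}\Delta}h\|_{L_p}\le\|e^{\frac{\tau}{2}\Delta}h\|_{L_1}^{1/p}\|e^{\frac{\tau}{2}\Delta}h\|_{L_\infty}^{1-1/p}\to 0$ for $p<\infty$. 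Taking $\limsup_{\tau\to\infty}$ in the displayed inequality yields \eqref{eq:limsup}.

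\textbf{Main obstacle.} There is no serious difficulty here; the entire argument reduces to scalar It\^o calculus and elementary kernel estimates. The one point that needs a little care is the rigorous justification of It\^o's formula: one must verify the required smoothness and decay of $u$ on the \emph{closed} interval $[0,\tau]$ (in particular continuity of all relevant derivatives up to the endpoint $t=\tau$, where the heat parameter vanishes, which follows since $e^{s\Delta}h\to h$ in the Schwartz topology as $s\to 0^+$), and invoke the finite-dimensionality of the range of $h$ to reduce to the scalar It\^o formula of \cite[Theorem~3.3.6]{KS} applied componentwise.
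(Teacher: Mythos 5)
Your proposal is correct and follows essentially the same route as the paper: It\^o's formula applied to $t\mapsto e^{\frac{\tau-t}{2}\Delta}h(B(t)+x)$ to obtain the identity $h(B(\tau)+x)-e^{\frac{\tau}{2}\Delta}h(x)$, then the triangle inequality, translation invariance for the first term, and decay of $\|e^{\frac{\tau}{2}\Delta}h\|_{L_p(\R^n,Y)}$ for the second. The only (immaterial) difference is in the last step, where you use $L_1$--$L_\infty$ interpolation of the heat semigroup while the paper uses Young's inequality together with maximal-function domination and dominated convergence; both are valid since $h$ is compactly supported.
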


\begin{proof}
If  $\f\in L_1(\R^n)$ then $\E[\f(B(t)+x)]=e^{\frac{t}{2}\Delta}\f(x)$ for every $t\in [0,\infty)$ and $x\in \R^n$. Hence,
\begin{equation}\label{eq:markov property}
\int_{\R^n}\E[\f(B(t)+x)]\dd x=\int_{\R^n}e^{\frac{t}{2}\Delta}\f(x)\dd x=\int_{\R^n}\f(x)\dd x.
\end{equation}
For $(x,t)\in \R^n\times (0,\infty)$, let
$
u(x,t)\eqdef e^{\frac{t}{2}\Delta}h(x)
$
denote the heat extension of $h$. By It\^o's formula (see \cite[Theorem 3.3.6]{KS}) applied to the function $t\mapsto u(B(t)+x,\tau-t)$, for every $x\in \R^n$ we have
\begin{align}\label{eq:use ito}
\nonumber&\sum_{j=1}^n \int_0^\tau \frac{\partial u}{\partial x_j} (B(t)+x,\tau-t)\dd B_j(t)\\&=u(B(\tau)+x,0)-u(x,\tau)-\int_0^\tau \bigg(-\frac{\partial u}{\partial t}+\frac12 \Delta_x u\bigg)(B(t)+x,\tau-t) \dd t\\
&= h(B(\tau)+x)-e^{\frac{\tau}{2}\Delta}h(x).
\end{align}
Consequently,
\begin{align}
\nonumber&\bigg(\int_{\R^n}\E\bigg[\Big\|\sum_{j=1}^n \int_0^\tau \frac{\partial}{\partial x_j} e^{\frac{\tau-t}{2}\Delta}h(B(t)+x)\dd B_j(t) \Big\|_Y^p\bigg]\dd x\bigg)^{\frac{1}{p}}\\&\stackrel{\eqref{eq:use ito}}{\le} \left(\int_{\R^n}\E\left[\left\|h(B(\tau)+x)\right\|_Y^p\right]\dd x\right)^{\frac{1}{p}}+\left(\int_{\R^n}\E\left[\left\|e^{\frac{\tau}{2}\Delta}h(x)\right\|_Y^p\right]\dd x\right)^{\frac{1}{p}}\nonumber\\
&\stackrel{\eqref{eq:markov property}}{=} \|h\|_{L_p(\R^n,Y)}+\left\|e^{\frac{\tau}{2}\Delta}h\right\|_{L_p(\R^n,Y)}.\label{eq:before limsup}
\end{align}
To deduce the desired bound~\eqref{eq:limsup} from~\eqref{eq:before limsup}, it remains to note that
\begin{equation}\label{eq:heat limit at infty}
\forall\, p\in(1,\infty),\qquad \lim_{\tau\to \infty} \|e^{\frac{\tau}{2}\Delta}h\|_{L_p(\R^n,Y)}=0.
\end{equation}
 Indeed, by Young's inequality we have the following point-wise estimate
\begin{equation}\label{eq:heat kernel pointwise}
\forall\,x\in \R^n,\qquad \left\|e^{\frac{\tau}{2}\Delta}h(x)\right\|_Y=\left\| k_{\frac{\tau}{2}}*h(x)\right\|_Y\leq\left\| k_{\frac{\tau}{2}}\right\|_{L_q(\R^n)}\|h\|_{L_p(\R^n,Y)},
\end{equation}
where $k:\R^n\to \R$ is the heat kernel and $q=p/(p-1)$. Since $q\in (0,\infty)$, the $L_q$-norm of the heat kernel $k_{\tau/2}$ converges to $0$ as $\tau\to \infty$. Moreover, $\|e^{\frac{\tau}{2}\Delta}h\|_Y$ is dominated point-wise by the Hardy--Littlewood maximal function $M\|h\|_Y\in L_p(\R^n)$. Hence~\eqref{eq:heat limit at infty} follows from~\eqref{eq:heat kernel pointwise}   by an application of the dominated convergence theorem.
\end{proof}

The following theorem is the main result of the present section: it establishes an estimate that will be used several times in the ensuing discussion.

\begin{theorem}\label{thm:A representation} Fix $n\in \N$ and $p\in (1,\infty)$. Suppose that for $t\in (0,\infty)$ we are given a bounded operator $A(t):Y\to Y$ such that the mapping $A:(0,\infty)\to \bddlin(Y,Y)$ is bounded and measurable. Also, suppose that  $T:L_p(\R^n,Y)\to L_p(\R^n,Y)$ is  a linear operator that has the following dual representation. For every sufficiently nice $f$ in a dense subspace of $L_p(\R^n,Y)$, and $g^*$ in a dense subspace of $L_{q}(\R^n,Y^*)$, where $q=p/(p-1)$, we have
\begin{equation}\label{eq:representation A}
\int_{\R^n} g^*(x)\left(Tf(x)\right)\dd x=\sum_{j=1}^n\int_0^\infty\int_{\R^n} \bigg(\frac{\partial}{\partial x_j} e^{\frac{t}{2}\Delta}g^*(x)\bigg)\bigg(A(t)\frac{\partial}{\partial x_j}  e^{\frac{t}{2}\Delta}f(x)\bigg)\dd x \dd t.
\end{equation}
Then, recalling the notation~\eqref{eq:R_p(A) notation},
\begin{equation}
\|T\|_{L_p(\R^n,Y)\to L_p(\R^n,Y)}\le \beta_p^+(Y)\beta_p^-(Y)\mathscr{R}_p
(A)\label{eq:A rep bound}.
\end{equation}

Moreover, in the special case $A:(0,\infty)\to \C$  we have
\begin{equation}\label{eq:T bound real valued}
\|T\|_{L_p(\R^n,Y)\to L_p(\R^n,Y)}\le \beta_p(Y)\left\|A\right\|_{L_\infty(0,\infty)}.
\end{equation}
\end{theorem}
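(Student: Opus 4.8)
The plan is to realize the bilinear form on the right of \eqref{eq:representation A} as the expectation of a $Y^*$--$Y$ pairing of two coordinate-wise It\^{o} stochastic integrals against a Brownian motion, and then estimate the two resulting factors separately. The factor built from $f$ will carry the operators $A(t)$ and will be controlled by \eqref{eq:R-boundedness stochastic integral-coupled} (Garling decoupling combined with the $\mathscr{R}$-boundedness of $\{A(t)\}_{t>0}$) followed by Lemma~\ref{lem:use ito}; the factor built from $g^*$ carries no $A(t)$ and will be controlled by Lemma~\ref{lem:use ito} applied in the dual space $Y^*$ (which holds for an arbitrary Banach space), at no cost in the constant. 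This is exactly why the bound in \eqref{eq:A rep bound} comes out as $\beta_p^+(Y)\beta_p^-(Y)\mathscr{R}_p(A)$, with no extra UMD factor.

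First I would reduce, by density and the duality between $L_p(\R^n,Y)$ and $L_q(\R^n,Y^*)$, to bounding $\big|\int_{\R^n} g^*(x)\big(Tf(x)\big)\dd x\big|$ for $f$ and $g^*$ smooth, compactly supported, with finite-dimensional range (this is the class of ``sufficiently nice'' functions for which both \eqref{eq:representation A} and Lemma~\ref{lem:use ito} apply). Let $\{B(t)\}_{t\ge0}$ be a standard Brownian motion in $\R^n$ started at $0$, and for $\tau\in(0,\infty)$ and $x\in\R^n$ define the $Y^*$-valued and $Y$-valued stochastic integrals
\[
I_\tau^{g^*}(x)\eqdef\sum_{j=1}^n\int_0^\tau \frac{\partial}{\partial x_j}e^{\frac{\tau-t}{2}\Delta}g^*(B(t)+x)\dd B_j(t),\qquad J_\tau^{f}(x)\eqdef\sum_{j=1}^n\int_0^\tau A(\tau-t)\frac{\partial}{\partial x_j}e^{\frac{\tau-t}{2}\Delta}f(B(t)+x)\dd B_j(t),
\]
which are well defined in the scalar sense since the integrands are adapted, finite-dimensional-valued, and satisfy \eqref{eq:integrability hypothesis} (and $A$ is bounded). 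Applying It\^{o}'s isometry coordinate-wise to the $Y^*$--$Y$ pairing $\langle I_\tau^{g^*}(x),J_\tau^{f}(x)\rangle$, then integrating in $x$ and using the invariance identity $\int_{\R^n}\E[\psi(B(t)+x)]\dd x=\int_{\R^n}\psi(y)\dd y$ from the proof of Lemma~\ref{lem:use ito}, and finally substituting $s=\tau-t$, I obtain
\[
\int_{\R^n}\E\big[\langle I_\tau^{g^*}(x),J_\tau^{f}(x)\rangle\big]\dd x=\sum_{j=1}^n\int_0^\tau\int_{\R^n}\Big(\frac{\partial}{\partial x_j}e^{\frac{s}{2}\Delta}g^*(y)\Big)\Big(A(s)\frac{\partial}{\partial x_j}e^{\frac{s}{2}\Delta}f(y)\Big)\dd y\dd s.
\]
Letting $\tau\to\infty$ and invoking \eqref{eq:representation A} (with its iterated $t$-integral read as the limit of these truncations) identifies the left-hand side, in the limit, with $\int_{\R^n}g^*(x)\big(Tf(x)\big)\dd x$.

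To finish, I would use $|v^*(v)|\le\|v^*\|_{Y^*}\|v\|_Y$ together with H\"{o}lder's inequality with exponents $q$ and $p$ over $\Omega\times\R^n$ to get
\[
\Big|\int_{\R^n}g^*(x)\big(Tf(x)\big)\dd x\Big|\le\limsup_{\tau\to\infty}\Big(\int_{\R^n}\E\|I_\tau^{g^*}(x)\|_{Y^*}^q\dd x\Big)^{\frac1q}\cdot\limsup_{\tau\to\infty}\Big(\int_{\R^n}\E\|J_\tau^{f}(x)\|_{Y}^p\dd x\Big)^{\frac1p}.
\]
By Lemma~\ref{lem:use ito} in the Banach space $Y^*$ (with exponent $q$ and $h=g^*$), the first factor is at most $\|g^*\|_{L_q(\R^n,Y^*)}$. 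For the second factor, for each fixed $x$ I apply \eqref{eq:R-boundedness stochastic integral-coupled} with $\Phi(t)=A(\tau-t)$ and $V_j(t)=\partial_j e^{\frac{\tau-t}{2}\Delta}f(B(t)+x)$, using $\mathscr{R}_p(\{A(\tau-t):t\in(0,\tau)\})\le\mathscr{R}_p(A)$, and then integrate the resulting pointwise-in-$x$ bound and apply Lemma~\ref{lem:use ito} in $Y$ (with exponent $p$ and $h=f$); this bounds the second factor by $\beta_p^+(Y)\beta_p^-(Y)\mathscr{R}_p(A)\|f\|_{L_p(\R^n,Y)}$, which proves \eqref{eq:A rep bound}. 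For the scalar case \eqref{eq:T bound real valued} I run the identical argument, only replacing \eqref{eq:R-boundedness stochastic integral-coupled} by the contraction estimate \eqref{eq:contraction stochastic integral} with $\phi(t)=A(\tau-t)$, which costs $\beta_p(Y)\|A\|_{L_\infty(0,\infty)}$ in place of $\beta_p^+(Y)\beta_p^-(Y)\mathscr{R}_p(A)$.

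I expect the main obstacle to be the stochastic-integral identity above: carefully justifying the coordinate-wise use of It\^{o}'s isometry for the $Y^*$--$Y$ pairing of two vector-valued integrals, the Fubini/Tonelli interchange of the $\dd x$-integral with the expectation (legitimate because of the finite-dimensional ranges and \eqref{eq:integrability hypothesis}), and reading the possibly only conditionally convergent iterated integral in \eqref{eq:representation A} as the $\tau\to\infty$ limit of its truncations in $t$. Once this identity is established, everything else is a direct combination of Lemma~\ref{lem:use ito} with the already-proved inequalities \eqref{eq:R-boundedness stochastic integral-coupled} and \eqref{eq:contraction stochastic integral}.
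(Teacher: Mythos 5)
Your proposal is correct and follows essentially the same route as the paper: the same stochastic-integral representation of the truncated bilinear form (your $I_\tau^{g^*}$, $J_\tau^{f}$ are exactly the paper's $G^*_\tau$, $F_\tau$), the same H\"older step, the same double use of Lemma~\ref{lem:use ito}, and the same appeal to~\eqref{eq:R-boundedness stochastic integral-coupled} (resp.~\eqref{eq:contraction stochastic integral} in the scalar case) to absorb $A(\tau-t)$. The technical points you flag (It\^o isometry for the finite-dimensional-valued pairing, Fubini in $x$, and reading~\eqref{eq:representation A} as a $\tau\to\infty$ limit of truncations) are handled in the paper in exactly the way you anticipate.
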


\begin{proof}
By duality and the identity~\eqref{eq:representation A}, the desired estimate~\eqref{eq:A rep bound} would follow if we show that for every sufficiently nice $f\in L_p(\R^n,Y)$ and $g^*\in L_q(\R^n,Y^*)$,
\begin{multline}\label{eq:tau limit}
\limsup_{\tau\to \infty} \sum_{j=1}^n\int_0^\tau\int_{\R^n}
\left(\frac{\partial}{\partial x_j}
e^{\frac{s}{2}\Delta}g^*(x)\right)\left(A(s)\frac{\partial}{\partial x_j} e^{\frac{s}{2}\Delta}f(x)\right)\dd x
\dd s\\\le
\beta_p^+(Y)\beta_p^-(Y)\mathscr{R}_p(A)\|f\|_{L_p(\R^n,Y)}\|g^*\|_{L_q(\R^n,Y^*)}.
\end{multline}
Let us consider $f,g$ smooth and compactly supported, and taking values in finite-dimensional subspaces of $Y$ and $Y^*$, respectively.

For every $(x,s)\in \R^n\times (0,\infty)$ denote for the sake of simplicity
$$
\gamma_j^*(x,s)\eqdef \frac{\partial}{\partial x_j}
e^{\frac{s}{2}\Delta}g^*(x)\qquad\mathrm{and}\qquad \phi_j(x,s)\eqdef A(s)\frac{\partial}{\partial x_j} e^{\frac{s}{2}\Delta}f(x).
$$
Let $\{B(t)=(B_1(t),\ldots,B_n(t))\}_{t\in [0,\infty)}$ be a Brownian motion in $\R^n$, starting at $0$. It follows from the identity~\eqref{eq:markov property} that for every $\tau\in (0,\infty)$,
\begin{multline}
\sum_{j=1}^n\int_0^\tau\int_{\R^n}\left(\frac{\partial}{\partial x_j}
e^{\frac{s}{2}\Delta}g^*(x)\right)\left(A(s)\frac{\partial}{\partial x_j} e^{\frac{s}{2}\Delta}f(x)\right)\dd x
\dd s\\=\int_{\R^n}\E\bigg[\sum_{j=1}^n\int_0^\tau\gamma_j^*(B(t)+x,\tau-t)\left(\phi_j(B(t)+x,\tau-t)\right)dt\bigg]\dd x =\int_{\R^n}\E\left[G^*_\tau(x)(F_\tau(x))\right]\dd x\label{eq:ito isometry},
\end{multline}
where we introduce the notations
\begin{equation}\label{eq:def F tau}
F_\tau(x)\eqdef \sum_{j=1}^n\int_0^\tau A(\tau-t)\frac{\partial}{\partial x_j}e^{\frac{\tau-t}{2}\Delta}f(B(t)+x) \dd B_j(t),
\end{equation}
and
\begin{equation}\label{eq:def G tau}
G^*_\tau(x)\eqdef \sum_{j=1}^n\int_0^\tau\frac{\partial}{\partial x_j}e^{\frac{\tau-t}{2}\Delta}g^*(B(t)+x)\dd B_j(t).
\end{equation}
\eqref{eq:ito isometry} is a well-known identity (see~\cite[Proposition 2.17]{KS}) for scalar-valued functions, and it follows from this for the vector-valued functions with a finite-dimensional range that we consider here.

By H\"older's inequality it follows from~\eqref{eq:ito isometry} that
\begin{multline}\label{eq:pq holder FG}
\sum_{j=1}^n\int_0^\tau\int_{\R^n}\left(\frac{\partial}{\partial x_j}
e^{\frac{s}{2}\Delta}g^*(x)\right)\left(A(s)\frac{\partial}{\partial x_j} e^{\frac{s}{2}\Delta}f(x)\right)\dd x
\dd s\\\le
\left(\int_{\R^n}\E\left[\|F_\tau(x)\|_Y^p\right]\dd x\right)^{\frac{1}{p}}\left(\int_{\R^n}\E\left[\|G_\tau^*(x)\|_{Y^*}^q\right]\dd x\right)^{\frac{1}{q}}.
\end{multline}
Recalling~\eqref{eq:def G tau}, by Lemma~\ref{lem:use ito} we have
\begin{equation}\label{eq:limsup Gtau}
\limsup_{\tau\to \infty} \left(\int_{\R^n}\E\left[\|G_\tau^*(x)\|_{Y^*}^q\right]\dd x\right)^{\frac{1}{q}}\le \|g^*\|_{L_q(\R^n,Y^*)}.
\end{equation}
Recalling~\eqref{eq:def F tau}, it follows from~\eqref{eq:R-boundedness stochastic integral-coupled} and~\eqref{eq:contraction stochastic integral} that if we set $K=\beta_p^+(Y)\beta_p^-(Y)\mathscr{R}_p(A)$ if $Y\neq \R$, and $K=\beta_p(Y)\|A\|_{L_\infty(0,\infty)}$ if $A$ is scalar-valued, then
\begin{equation*}
\int_{\R^n}\E\left[\|F_\tau(x)\|_Y^p\right]\dd x\le K^p\E\bigg[\Big\|\sum_{j=1}^n\int_0^\tau \frac{\partial}{\partial x_j}e^{\frac{\tau-t}{2}\Delta}f(B(t)+x) \dd B_j(t)\Big\|_Y^p\bigg].
\end{equation*}
 Another application of Lemma~\ref{lem:use ito}  now implies that
\begin{equation}\label{eq:limsup Ftau}
\limsup_{\tau\to \infty} \left(\int_{\R^n}\E\left[\|F_\tau(x)\|_{Y}^p\right]\dd x\right)^{\frac{1}{p}}\le \|f\|_{L_q(\R^n,Y)}.
\end{equation}
The desired estimate~\eqref{eq:tau limit} is a consequence of~\eqref{eq:pq holder FG}, \eqref{eq:limsup Gtau} and~\eqref{eq:limsup Ftau}.
\end{proof}

\subsection{Vector-valued multipliers} Let $(Y,\|\cdot\|_Y)$ be a Banach space. The Fourier transform of $f\in L_1(\R^n,Y)$ will be denoted below by $\F f:\R^n\to Y$, where we use the normalization
$$
\F f(x)\eqdef \frac{1}{(2\pi)^{n/2}}\int_{\R^n} e^{- i\langle x,y\rangle}f(x) \dd y.
$$
A possible formulation of Parseval's identity in this vector-valued setting is to say that for functions $f:\R^n\to Y$ and $g^*:\R^n\to Y^*$ that are either smooth and compactly supported, or Fourier transforms of such functions, we have
\begin{equation}\label{eq:parseval vector valued}
\int_{\R^n} \F g^*(x)(\F f(x))\dd x=\int_{\R^n} g^*(x)(f(-x))\dd x.
\end{equation}
If $(X,\|\cdot\|_X)$ is an additional Banach space and $\mathfrak{m}:\R^n\to\bddlin(X,Y)$ is measurable then the multiplier associated to $\mathfrak{m}$ is defined as usual by considering for every smooth and compactly supported $f: \R^n\to X$ (or the Fourier transform of such a function) the function $T_{\mathfrak{m}}f:\R^n\to Y$ given by
$$
T_{\mathfrak{m}}f\eqdef (\F^{-1}\mathfrak{m})*f=\F^{-1}\left(x\mapsto \mathfrak{m}(x)\F f(x)\right).
$$
If $\mathfrak{m}$ is smooth and locally bounded at least away from the coordinate hyperplanes, and the Fourier transform of $f: \R^n\to X$ is smooth and compactly supported away from these hyperplanes, then also $T_{\mathfrak m}f$ has a smooth and compactly supported Fourier transform. Hence, for such $f$ and smooth and compactly supported $g^*:\R^n \to Y^*$ (or the Fourier transform of such a function), Parseval's identity applies and gives
\begin{equation}\label{eq:parseval multiplier}
\int_{\R^n} g^*(x)(T_{\mathfrak{m}}f(x))\dd x= \int_{\R^n}\F g^*(x)\left(\mathfrak{m}(-x)\F f(-x)\right)\dd x.
\end{equation}
Also, under our choice of normalization of the Fourier transform, $-\Delta=T_\mathfrak{m}$ for $\mathfrak{m}(x)=\|x\|_2^2$.

Theorem~\ref{thm:A representation} can be used to bound the following multipliers, which arise as Laplace transforms of $-\Delta$. (Another approach to such multipliers appears in the recent survey \cite[Section 2.2.1]{DGM}; while it is presented there for scalar-valued functions, it is based on principles that are valid in any UMD space.) Suppose that $A:(0,\infty)\to \bddlin(Y,Y)$ is measurable and define $\alpha:\R\to \bddlin(Y,Y)$ by
$$
\forall\, y\in Y,\qquad \alpha(s)\eqdef s\int_0^\infty e^{-st}A(t)y\dd t.
$$
Then Theorem~\ref{thm:A representation} applies to $\alpha(-\Delta)$, i.e., to the operator $T_\mathfrak{m}$ where $\mathfrak{m}:\R^n\to \bddlin(Y,Y)$ is given by
\begin{equation}\label{eq:laplacian function multiplier}
\forall (x,y)\in \R^n\times Y,\qquad \mathfrak{m}(x)y=\alpha(\|x\|_2^2)y=\|x\|_2^2\int_0^\infty e^{-t\|x\|_2^2}A(t)y\dd t.
\end{equation}
Indeed, by Parseval's identity~\eqref{eq:parseval multiplier}, the representation~\eqref{eq:representation A} is a direct consequence of~\eqref{eq:laplacian function multiplier}. We therefore have the following dimension independent bound, which holds true for every $p\in (1,\infty)$.
$$
\left\|\Delta\int_0^\infty e^{t\Delta}A(t)\dd t\right\|_{L_p(\R^n,Y)\to L_p(\R^n,Y)}\le \beta_p(Y)^2 \mathscr{R}_p(A),
$$
where we recall the notation~\eqref{eq:R_p(A) notation}. Also, if $A$ takes values in $\C$ then
\begin{equation}\label{eq:laplace of laplace}
\left\|\Delta\int_0^\infty e^{t\Delta}A(t)\dd t\right\|_{L_p(\R^n,Y)\to L_p(\R^n,Y)}\le \beta_p(Y)\|A\|_{L_\infty(0,\infty)}.
\end{equation}

Later we shall use~\eqref{eq:laplace of laplace} as a source of dimension-independent bounds for multipliers that correspond to imaginary powers of the Laplacian. Specifically, for every $s\in (0,\infty)$ and $u\in \R$,
$$
s^{iu} =s^{-(1-iu)}s=\frac{s}{\Gamma(1-iu)}\int_0^\infty t^{-iu}e^{-st}\dd t.
$$
It therefore follows from~\eqref{eq:laplace of laplace} that
\begin{equation}\label{eq:imaginary power of laplacian}
\left\|(-\Delta)^{iu}\right\|_{L_p(\R^n,Y)\to L_p(\R^n,Y)}\le \frac{\beta_p(Y)}{|\Gamma(1-iu)|}\asymp \beta_p(Y)\frac{e^{|u|\arctan|u|}}{\sqrt{1+|u|}}
\asymp \beta_p(Y) \frac{e^{\frac{\pi |u|}{2}}}{\sqrt{1+|u|}},
\end{equation}
where the penultimate step in~\eqref{eq:imaginary power of laplacian} is a consequence of Stirling's formula. For ease of later reference, we record the bound that we have just proved as Corollary~\ref{cor:imaginary power laplacian} below. The reverse implication, i.e., that the boundedness  $(-\Delta)^{iu}$ on $L_p(\R^n,Y)$ implies that $Y$ is UMD, is also true; in fact it was pointed out in~\cite{Hytonen:aspects}  that~\cite{Guerre} implicitly contains the estimate $\beta_p(Y)\leq\liminf_{u\to 0}\|(-\Delta)^{iu}\|_{L_p(\R^n,Y)\to L_p(\R^n,Y)}$.

\begin{corollary}\label{cor:imaginary power laplacian}
Suppose that $p\in (1,\infty)$ and $(Y,\|\cdot\|_Y)$ is a UMD Banach space. Then for every $u\in \R$ and $n\in \N$ we have
$$
\left\|(-\Delta)^{iu}\right\|_{L_p(\R^n,Y)\to L_p(\R^n,Y)}\lesssim \beta_p(Y) \frac{e^{\frac{\pi |u|}{2}}}{\sqrt{1+|u|}}.
$$
\end{corollary}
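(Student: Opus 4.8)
The plan is to realize $(-\Delta)^{iu}$ as one of the Laplace-transform-of-the-Laplacian multipliers to which the scalar-symbol case of Theorem~\ref{thm:A representation}, i.e. the bound~\eqref{eq:laplace of laplace}, applies, and then to estimate the resulting constant by Stirling's formula. Thus the corollary is a short deduction from machinery already in place, and contains no genuinely new difficulty.

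First I would record the elementary identity that for every $s\in(0,\infty)$ and $u\in\R$,
$$
s^{iu}=s^{-(1-iu)}\cdot s=\frac{s}{\Gamma(1-iu)}\int_0^\infty t^{-iu}e^{-st}\dd t,
$$
which exhibits $s\mapsto s^{iu}$ as $s$ times the Laplace transform of the \emph{scalar-valued} function $A(t)=t^{-iu}/\Gamma(1-iu)$, whose sup-norm is $\|A\|_{L_\infty(0,\infty)}=1/|\Gamma(1-iu)|$. Substituting $s=\|x\|_2^2$ and recalling that $-\Delta$ is the Fourier multiplier with symbol $\|x\|_2^2$, this says precisely that $(-\Delta)^{iu}$ is the multiplier $T_{\mathfrak m}$ with $\mathfrak m$ as in~\eqref{eq:laplacian function multiplier} for this choice of $A$; equivalently, $(-\Delta)^{iu}=\Delta\int_0^\infty e^{t\Delta}A(t)\dd t$. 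The dual integral representation~\eqref{eq:representation A} needed to invoke Theorem~\ref{thm:A representation} follows from Parseval's identity~\eqref{eq:parseval multiplier} exactly as in the paragraph preceding~\eqref{eq:laplace of laplace}.

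Next I would apply~\eqref{eq:laplace of laplace} to conclude the $n$-independent bound
$$
\bigl\|(-\Delta)^{iu}\bigr\|_{L_p(\R^n,Y)\to L_p(\R^n,Y)}\le \beta_p(Y)\,\|A\|_{L_\infty(0,\infty)}=\frac{\beta_p(Y)}{|\Gamma(1-iu)|}.
$$
Finally I would estimate $1/|\Gamma(1-iu)|$ by the classical asymptotics of the Gamma function along vertical lines. Using $|\Gamma(1-iu)|=|\Gamma(1+iu)|$ and the identity $|\Gamma(1+iu)|^2=\pi u/\sinh(\pi u)$ (or directly Stirling's formula, which gives $|\Gamma(1-iu)|\asymp e^{-|u|\arctan|u|}\sqrt{1+|u|}$, and then $|u|(\tfrac\pi2-\arctan|u|)=O(1)$ for all $u\in\R$), one obtains $1/|\Gamma(1-iu)|\asymp e^{\pi|u|/2}/\sqrt{1+|u|}$, valid uniformly in $u$ including near $u=0$; combining with the previous display yields the assertion.

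As for the main obstacle: at the level of the corollary there is none — all the substance sits in Theorem~\ref{thm:A representation} (proved earlier via vector-valued stochastic integration, It\^o's formula, Garling's decoupling inequalities and Bourgain's vector-valued Stein inequality) and in the verification of~\eqref{eq:representation A}. Granting those, the only mild care required is to carry out the scalar Laplace-transform manipulation and the $\Gamma$-function asymptotics uniformly in $u\in\R$.
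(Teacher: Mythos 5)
Your proposal is correct and follows essentially the same route as the paper: write $s^{iu}=\frac{s}{\Gamma(1-iu)}\int_0^\infty t^{-iu}e^{-st}\dd t$, apply the scalar-symbol bound~\eqref{eq:laplace of laplace} with $A(t)=t^{-iu}/\Gamma(1-iu)$ to get the dimension-free estimate $\beta_p(Y)/|\Gamma(1-iu)|$, and conclude by the Gamma-function asymptotics. The only (harmless) difference is that you supply the explicit identity $|\Gamma(1+iu)|^2=\pi u/\sinh(\pi u)$ where the paper simply cites Stirling's formula.
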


Our next corollary of Theorem~\ref{thm:A representation} is a dimension-independent bound for a multiplier that will be used in the ensuing proof of Theorem~\ref{thm:Lp UAAP intro}. We shall use below the following integral representation.
\begin{equation}\label{eq:beta identity}
\forall (\theta,\alpha)\in (0,1)\times [0,\infty),\qquad \frac{1}{(1+\alpha)^\theta}=\frac{\sin(\pi\theta)}{\pi}\int_0^1 \frac{\dd s}{s^{1-\theta}(1-s)^\theta(1+\alpha s)}.
\end{equation}
To verify the validity of~\eqref{eq:beta identity}, simply apply the change of variable $s=t/(1+\alpha-\alpha t)$.

\begin{corollary}\label{cor:a power mult}
Fix $a\in (0,2]$ and $n\in \N$. Define $\mathfrak{m}_a:\R^n\to \R$ by setting
\begin{equation}\label{eq:def ma}
\forall, x=(x_1,\ldots,x_n)\in \R^n\setminus \{0\},\qquad \mathfrak{m}_a(x)\eqdef \frac{|x_1|^a}{\|x\|_2^a}.
\end{equation}
Suppose that $(Y,\|\cdot\|_Y)$ is a UMD Banach space and that $p\in (1,\infty)$. Then
\begin{equation}\label{eq:m a multiplier bound}
\|T_{\mathfrak{m}_a}\|_{L_p(\R^n,Y)\to L_p(\R^n,Y)}\le \beta_p^+(Y)^2\beta_p^-(Y)\le \beta_p(Y)^3.
\end{equation}
\end{corollary}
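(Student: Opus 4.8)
The plan is to realize the multiplier $\mathfrak m_a$ as $\alpha(-\Delta_1)$ composed appropriately, where $\Delta_1$ denotes the Laplacian in the first variable, and then to feed this into Theorem~\ref{thm:A representation} (more precisely, into the consequence~\eqref{eq:laplace of laplace} of it, applied in the one-dimensional first coordinate). Concretely, write $\mathfrak m_a(x)=|x_1|^a/\|x\|_2^a=(|x_1|^2/\|x\|_2^2)^{a/2}$; setting $\alpha=\|x\|_2^2/|x_1|^2-1=\sum_{j\ge 2}x_j^2/x_1^2\ge 0$ and $\theta=a/2\in(0,1]$, we have $\mathfrak m_a(x)=1/(1+\alpha)^\theta$. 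First I would dispose of the endpoint $a=2$ trivially ($\mathfrak m_2(x)=x_1^2/\|x\|_2^2=-\partial_{x_1}^2(-\Delta)^{-1}$ is a Riesz-type multiplier, and in any case one can reduce to $a<2$ by noting the bound is increasing in the relevant sense, or handle it by a direct limiting argument), so assume $\theta=a/2\in(0,1)$.

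Next I would apply the Beta-function identity~\eqref{eq:beta identity} with this choice of $\alpha$ and $\theta$:
\begin{equation*}
\mathfrak m_a(x)=\frac{\sin(\pi\theta)}{\pi}\int_0^1\frac{\dd s}{s^{1-\theta}(1-s)^\theta\big(1+s(\|x\|_2^2/|x_1|^2-1)\big)}.
\end{equation*}
Multiplying numerator and denominator inside the last factor by $|x_1|^2$, the integrand becomes
\begin{equation*}
\frac{|x_1|^2}{s^{1-\theta}(1-s)^\theta\big((1-s)|x_1|^2+s\|x\|_2^2\big)}=\frac{|x_1|^2}{s^{1-\theta}(1-s)^\theta\big(|x_1|^2+s\sum_{j\ge 2}x_j^2\big)}.
\end{equation*}
The key observation is that for each fixed $s\in(0,1)$ the multiplier $x\mapsto |x_1|^2/\big(|x_1|^2+s\sum_{j\ge 2}x_j^2\big)$ is, up to the harmless factor $1/(1+s)$ coming from comparing $|x_1|^2+s\sum_{j\ge 2}x_j^2$ with $\tfrac{1}{1+s}\big(|x_1|^2+s\|x\|_2^2\big)$... — more cleanly: write it as $|x_1|^2/\big((1-s)|x_1|^2+s\|x\|_2^2\big)$, which is $-\partial_{x_1}^2$ applied to the resolvent-type multiplier $1/\big((1-s)|x_1|^2+s\|x\|_2^2\big)$. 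After an anisotropic rescaling of coordinates (scaling $x_1$ by $\sqrt{1-s}$ and $x_2,\dots,x_n$ by $\sqrt{s}$, which is an $L_p$-isometry after the compensating change of variables and does not affect operator norms), this is exactly the multiplier $x_1^2/\|x\|_2^2$ again — i.e., the Riesz-type second-order multiplier $R_1:=-\partial_{x_1}^2(-\Delta)^{-1}$, whose norm on $L_p(\R^n,Y)$ is bounded by $\beta_p^+(Y)^2\beta_p^-(Y)$. That bound on $R_1$ itself comes from Theorem~\ref{thm:A representation}: taking $A(t)$ to be the (scalar, hence operator-valued) constant times the appropriate heat-semigroup-in-$x_1$ building block, one checks the dual representation~\eqref{eq:representation A} holds with only the $j=1$ term, giving the $\beta_p^+\beta_p^-\mathscr R_p(A)$ bound with $\mathscr R_p(A)\le\beta_p^+(Y)$ via Corollary~\ref{cor:heat}. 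Then integrating against the finite measure $\tfrac{\sin(\pi\theta)}{\pi}s^{-(1-\theta)}(1-s)^{-\theta}\dd s$ (whose total mass is exactly $1$ by~\eqref{eq:beta identity} with $\alpha=0$) and using the triangle inequality / convexity of the operator norm yields $\|T_{\mathfrak m_a}\|_{L_p(\R^n,Y)\to L_p(\R^n,Y)}\le \beta_p^+(Y)^2\beta_p^-(Y)$, and the final inequality $\beta_p^+(Y)^2\beta_p^-(Y)\le\beta_p(Y)^3$ is immediate from~\eqref{eq:trivial beta+ beta-}.

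The main obstacle I anticipate is making the reduction of the $s$-dependent multiplier $x_1^2/\big((1-s)|x_1|^2+s\|x\|_2^2\big)$ to the $s$-independent Riesz multiplier $x_1^2/\|x\|_2^2$ fully rigorous and uniform in $s\in(0,1)$ — in particular verifying that the anisotropic dilation really is an isometry on $L_p(\R^n,Y)$ that intertwines the two multiplier operators (this is a routine change of variables on the Fourier side: $\mathfrak m(D)$ conjugated by a linear change of variables $U$ becomes $\mathfrak m(U^{-T}D)(\det U)^{0}$ up to the $L_p$-isometric normalization), and checking that no factors blow up as $s\to 0^+$ or $s\to 1^-$ (they do not, because after the dilation the operator norm no longer sees $s$ at all, and the blow-up of the measure $s^{-(1-\theta)}(1-s)^{-\theta}$ is integrable). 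Alternatively — and this may be the cleaner route to write up — one bypasses the dilation by applying Theorem~\ref{thm:A representation} directly to $T_{\mathfrak m_a}$ with the operator-valued (in fact scalar) symbol $A(t)$ chosen so that~\eqref{eq:laplacian function multiplier} reproduces $\mathfrak m_a$: one needs $\|x\|_2^2\int_0^\infty e^{-t\|x\|_2^2}A(t)\,\dd t$ to equal, after restoring the $\partial_{x_1}^2$ factor, the symbol $|x_1|^a/\|x\|_2^a$; but $\mathfrak m_a$ is not a function of $\|x\|_2^2$ alone, so one must instead run the heat-semigroup representation only in the $x_1$-variable, i.e., use the one-dimensional version of Theorem~\ref{thm:A representation} with $n=1$ applied on each fibre — this is exactly where the Beta identity is needed, to write $1/(1+\alpha)^\theta$, with $\alpha$ a function of $\|x\|_2^2/|x_1|^2$, as a superposition of resolvents $1/(1+s\alpha)$ each of which is a Laplace transform in $|x_1|^2$. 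Either way the essential content is: Beta-integral representation, then Theorem~\ref{thm:A representation} (via Corollary~\ref{cor:heat} for the $\mathscr R$-bound of the heat semigroup), then integrate and use $\beta_p^\pm\le\beta_p$.
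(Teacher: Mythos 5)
Your proposal is correct and is essentially the paper's proof: the paper likewise applies the Beta identity~\eqref{eq:beta identity} with $\theta=a/2$ and $\alpha=\|x'\|_2^2/x_1^2$ to write $\mathfrak{m}_a=\int_0^1\gamma_s\,\dd\mu_a(s)$ for a probability measure $\mu_a$, where $\gamma_s(x)=x_1^2/(x_1^2+s\|x'\|_2^2)$, and then bounds each $T_{\gamma_s}$ uniformly in $s$ by the one-dimensional (in the $x_1$-variable) case of Theorem~\ref{thm:A representation} over the fibre space $Z=L_p(\R^{n-1},Y)$ with $A(u)=-e^{us\Delta'}$, whose $\mathscr{R}_p$-bound is $\beta_p^+(Y)$ by Corollary~\ref{cor:heat}. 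The one point to correct in your write-up is that this $A$ is genuinely operator-valued (the heat semigroup in the remaining $n-1$ variables) rather than scalar, and one cannot literally invoke the $n$-dimensional Theorem~\ref{thm:A representation} ``with only the $j=1$ term'' and scalar $A$ (Lemma~\ref{lem:use ito} needs the full gradient sum); your second, fibre-wise route is the correct formalization and is exactly the paper's argument, while your anisotropic-dilation remark (that every $T_{\gamma_s}$ is $L_p$-isometrically conjugate to $T_{\mathfrak{m}_2}$) is a valid but unneeded extra reduction.
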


\begin{remark}
{\em When $Y=\C$ in Corollary~\ref{cor:a power mult}, Ba\~nuelos and Bogdan~\cite{BB07} obtained the bound
\begin{equation}\label{eq:BB}\|T_{\mathfrak{m}_a}\|_{L_p(\R^n,\R)\to L_p(\R^n,\R)}\le \max\left\{p,\frac{p}{p-1}\right\}-1.
 \end{equation}
Note that $\beta_p(\C)=\max\{p,p/(p-1)\}-1$, by a theorem of Burkholder~\cite{Bur84}. We are unable to recover the better estimate~\eqref{eq:BB} for the scalar-valued case of  Corollary~\ref{cor:a power mult} using our method.}
\end{remark}

\begin{proof}[Proof of Corollary~\ref{cor:a power mult}] Write each $x\in \R^n$ as $x=(x_1,x')$, where $x'=(x_2,\ldots,x_n)\in \R^{n-1}$. For every $s\in (0,1]$ define $\gamma_s:\R^n\to \R$ by
\begin{equation}\label{eq:define auxilliary gamma n-1}
\forall\, x\in \R^n,\qquad \gamma_s(x)\eqdef \frac{x_1^2}{x_1^2+s\|x'\|_2^2}=\int_0^\infty x_1^2e^{-(x_1^2+s\|x'\|_2^2)t}\dd t.
\end{equation}
Then $\gamma_1=\mathfrak{m}_2$, and if $a\in (0,2)$ then by~\eqref{eq:beta identity} with $\theta=a/2\in (0,1)$ and $\alpha=\|x'\|_2^2/x_1^2$ we have
\begin{equation*}\label{eq:ma identity}
\mathfrak{m}_a=\int_0^1 \gamma_sd\mu_a(s),
\end{equation*}
where $\mu_a$ is the probability measure on $(0,1)$ whose density is proportional to $s^{\frac{a}{2}-1}(1-s)^{-\frac{a}{2}}$. Therefore, in order to prove the desired estimate~\eqref{eq:m a multiplier bound} it suffices to show that for every $s\in (0,1)$,
$$
\|T_{\gamma_s}\|_{L_p(\R^n,Y)\to L_p(\R^n,Y)}\le \beta_p^+(Y)^2\beta_p^-(Y).
$$

Consider the UMD Banach space $Z\eqdef L_p(\R^{n-1},Y)$. By the identification $L_p(\R^n,Y)\cong L_p(\R,Z)$, the multiplier $T_{\gamma_s}$ can be thought of as an operator from $L_p(\R,Z)$ to $L_p(\R,Z)$; this is how Theorem~\ref{thm:A representation} will be applied next, i.e., with $Y$ replaced by $Z$, while noting that $\beta_p^\pm(Z)=\beta_p^\pm(Y)$.

We consider a test function $f:\R^n\to Y$ that is finite linear combination of functions of the form $x\mapsto f_1(x_1)f_2(x_2)\cdots f_n(x_n)y$, where $y\in Y$ and each $f_i$ has a smooth Fourier transform, compactly supported away from $0$, and a similar function $g^*:\R^n\to Y^*$. Note that such functions are dense in $L_p(\R,Z)\cong L_p(\R^n,Y)$ and in $L_q(\R,Z^*)=L_q(\R,L_q(\R^{n-1},Y^*))\cong L_q(\R^n,Y^*)$, respectively, where $q=p/(p-1)$.
 Then by the Parseval identity~\eqref{eq:parseval multiplier} we have
$$
\int_\R g^*(x_1)(T_{\gamma_s}f(x_1))\dd x_1=\int_{\R^n} g^*(x)(T_{\gamma_s}f(x))\dd x=\int_{\R^n} \F g^*(x)\left(\frac{x_1^2}{x_1^2+s\|x'\|_2^2}\F f(-x)\right)\dd x.
$$
Consequently, by the second equality in~\eqref{eq:define auxilliary gamma n-1},
\begin{multline*}
\int_\R g^*(x_1)(T_{\gamma_s}f(x_1))\dd x_1=\int_0^\infty \int_{\R^n} x_1 e^{-\frac{t }{2}x_1^2}\F g^*(x) \left(e^{-ts \|x'\|_2^2}x_1 e^{-\frac{t}{2} x_1^2}\F f(-x)\right)\dd x\dd t\\
=\int_0^\infty \int_{\R^n} \F\left(\frac{\partial}{\partial x_1} e^{\frac{t}{2}\left(\frac{\partial}{\partial x_1}\right)^2}g^*\right)(x)\left(\F\left(-e^{ts\Delta'}\frac{\partial}{\partial x_1} e^{\frac{t}{2}\left(\frac{\partial}{\partial x_1}\right)^2}f\right)\right)(-x)\dd x\dd t,
\end{multline*}
where $\Delta'$ denotes the Laplacian on $\R^{n-1}$, i.e., with respect to the variable $x'$. By the vector-valued Parseval identity~\eqref{eq:parseval vector valued}, we therefore have
$$
\int_\R g^*(x_1)(T_{\gamma_s}f(x_1))\dd x_1=\int_0^\infty \int_\R \left(\frac{\partial}{\partial x_1}  e^{\frac{u}{2}\left(\frac{\partial}{\partial x_1}\right)^2}g^*(x_1)\right)\left(A(u)\frac{\partial}{\partial x_1}
 e^{\frac{u}{2}\left(\frac{\partial}{\partial x_1}\right)^2}f\right)(x_1)\dd x_1\dd u,
$$
where $A(u)\eqdef -e^{us\Delta'}: L_p(\R,Z)\to L_p(\R,Z)$. Recalling Corollary~\ref{cor:heat}, it therefore follows from Theorem~\ref{thm:A representation} that
\begin{multline*}
\|T_{\gamma_s}\|_{L_p(\R^n,Y)\to L_p(\R^n,Y)}=\|T_{\gamma_s}\|_{L_p(\R,Z)\to L_p(\R,Z)}\\\le
\beta_p^+(Z)\beta_p^-(Z)\mathscr{R}_p\left(\left\{e^{t\Delta'}:L_p(\R^{n-1},Y)\to L_p(\R^{n-1},Y)\right\}_{t\in (0,\infty)}\right)\le \beta_p^+(Y)^2\beta_p^-(Y).\tag*{\qedhere}
\end{multline*}
\end{proof}

\subsection{Littlewood--Paley decomposition}
We need to introduce notation for the usual multi-scale bump functions that occur in Littlewood--Paley decompositions.
Let $\phi:\R\to [0,\infty)$ be smooth and supported on  $[-2,-1/2]\cup[1/2,2]$; for concreteness we can take
\begin{equation}\label{eq:def phi LP}
\forall\, x\in \R,\qquad \phi(x)\eqdef \left\{\begin{array}{ll}
e^{-\frac{1}{(|x|-1/2)(2-|x|)}}& \mathrm{if}\ |x|\in (1/2,2),\\
0& \mathrm{if}\ |x|\in [0,1/2]\cup [2,\infty).
\end{array}\right.
\end{equation}
For $k\in \Z$ define $\psi_k:\R\to \R$ by
\begin{equation}\label{eq:def psik}
\forall\, x\in \R,\qquad \psi_k(x)\eqdef \frac{\phi(2^kx)}{\sum_{j\in \Z} \phi(2^j x)}.
\end{equation}
We also define $\omega_k:\R\to \R$ by
\begin{equation}\label{eq:def omegak}
\forall\, x\in \R,\qquad \omega_k(x)\eqdef \psi_{k-1}(x)+\psi_k(x)+\psi_{k+1}(x).
\end{equation}
Thus $\omega_k\psi_k=\psi_k$, and therefore the corresponding multipliers satisfy the identity $T_{\omega_k}T_{\psi_k}=T_{\psi_k}$.

For every $k\in \Z$ define $\t_k:\R\to [0,\infty)$ by
\begin{equation}\label{eq:def thetha k}
\forall\, x\in \R,\qquad \t_k(x)\eqdef \frac{\sin^4(2^k x)}{(2^k x)^2}.
\end{equation}
Like  $\psi_k$, the function $\t_k$ is roughly localized around $|x|\asymp 2^{-k}$, but unlike $\psi_k$, it has long tails that are supported over all of $\R$. The importance of the special ``pseudo-bump functions" $\{\t_k\}_{k\in \Z}$ stems from the fact that they can be directly related to averages of dyadic martingales, which leads to the following form of the Littlewood--Paley inequality with a good constant.

\begin{proposition}\label{prop:littlewood paley}
Suppose that $p\in (1,\infty)$ and let $(Y,\|\cdot\|_Y)$ be a UMD Banach space. Then
\begin{equation*}
\forall\, f\in L_p(\R,Y),\qquad   \bigg(\E_{\e\in \{-1,1\}^{\Z}}\Big\|\sum_{j\in\Z}\e_j T_{\t_j}f\Big\|_{L_p(\R,Y)}^p\bigg)^{\frac{1}{p}}
  \leq\beta_p^+(Y)\|f\|_{L_p(\R,Y)}\le \beta_p(Y)\|f\|_{L_p(\R,Y)}.
\end{equation*}
\end{proposition}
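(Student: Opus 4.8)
The plan is to realize each multiplier $T_{\t_j}$ as an \emph{average, over translated dyadic grids, of a martingale difference operator}, so that Garling's one-sided inequality (i.e.\ the definition of $\beta_p^+(Y)$) applies with no loss in the constant. The starting point is the elementary trigonometric identity
\[
\t_k(x)=\frac{\sin^4(2^kx)}{(2^kx)^2}=\left(\frac{\sin(2^kx)}{2^kx}\right)^2-\left(\frac{\sin(2^{k+1}x)}{2^{k+1}x}\right)^2=:g_k(x)-g_{k+1}(x),
\]
which follows from $\sin(2a)=2\sin a\cos a$ and $1-\cos^2=\sin^2$; note in passing that $g_{-M}-g_{M+1}=\sum_{|j|\le M}\t_j$ telescopes and $\sum_{j\in\Z}\t_j\equiv 1$ off the origin, so $\{T_{\t_j}\}$ is a genuine resolution of the identity. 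Each $g_k$ is a nonnegative multiplier, and computing one Fourier transform (the transform of a triangle is $\operatorname{sinc}^2$) identifies $T_{g_k}$ with convolution against the mass-one tent function $\Delta_{2^{k+1}}(u)=2^{-(k+1)}\big(1-2^{-(k+1)}|u|\big)^{+}$ of half-width $2^{k+1}$.

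Next I would bring in the translated dyadic grids. For $t\in\R$ let $\mathscr{E}_j^{t}$ be the conditional expectation on $L_p(\R,Y)$ with respect to the grid of mesh $2^{-j}$ shifted by $t$. A one-line computation of the averaged kernel shows that $2^{j}\int_0^{2^{-j}}\mathscr{E}_j^{t}\,\dd t$ is exactly convolution with $\Delta_{2^{-j}}$ (the set of shifts putting two given points in a common cell of length $2^{-j}$ has measure $(2^{-j}-|x-y|)^+$). Hence $T_{g_k}=2^{k+1}\int_0^{2^{k+1}}\mathscr{E}_{-k-1}^{t}\,\dd t$, and since $t\mapsto\mathscr{E}_j^{t}$ is periodic with period $2^{-j}$ one may enlarge the interval of averaging to any common multiple of the relevant meshes. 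Telescoping, $T_{\t_k}=T_{g_k}-T_{g_{k+1}}$ becomes the $t$-average of the martingale difference $D_{-k-1}^{t}:=\mathscr{E}_{-k-1}^{t}-\mathscr{E}_{-k-2}^{t}$; crucially, for \emph{each fixed} $t$ the grids of all dyadic meshes translated by $t$ are nested, so $\{\mathscr{E}_j^{t}\}_{j\in\Z}$ is a genuine martingale filtration.

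Now fix a finite family $\{|j|\le N\}$ of scales and an averaging interval $[0,2^m)$ with $2^m$ a common multiple of all relevant meshes, so that
\[
\sum_{|j|\le N}\e_j T_{\t_j}f=\frac{1}{2^m}\int_0^{2^m}\sum_{|j|\le N}\e_j D_{-j-1}^{t}f\,\dd t .
\]
By convexity of $\|\cdot\|_Y^p$ and Fubini, $\E_\e\big\|\sum_{|j|\le N}\e_j T_{\t_j}f\big\|_{L_p(\R,Y)}^p\le\frac{1}{2^m}\int_0^{2^m}\E_\e\big\|\sum_{|j|\le N}\e_j D_{-j-1}^{t}f\big\|_{L_p(\R,Y)}^p\,\dd t$. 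For each fixed $t$, the inner sum is, up to relabelling, $\sum_{j=-N-1}^{N-1}\e'_j D_j^{t}f$, and Garling's inequality applied to the finite martingale $\mathscr{E}_{-N-2}^{t}f,\dots,\mathscr{E}_{N-1}^{t}f$ gives $\E_\e\big\|\mathscr{E}_{-N-2}^{t}f+\sum_{j}\e'_jD_j^{t}f\big\|_{L_p}^p\le\beta_p^+(Y)^p\|\mathscr{E}_{N-1}^{t}f\|_{L_p}^p\le\beta_p^+(Y)^p\|f\|_{L_p}^p$. By Minkowski's inequality (in $L_p$ of $\dd t\times\dd\e\times\dd x$) one may discard the unsigned initial term at the cost of $\|\mathscr{E}_{-N-2}^{t}f\|_{L_p}$, whose mass $2^{-(N+2)}\|f\|_{L_1(\R,Y)}$ is spread over a set of measure $O(2^{N+2})$ and hence tends to $0$ as $N\to\infty$, uniformly in $t$ (for $f\in L_1\cap L_p$, a dense class). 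This yields $\big(\E_\e\|\sum_{|j|\le N}\e_jT_{\t_j}f\|_{L_p}^p\big)^{1/p}\le\beta_p^+(Y)\|f\|_{L_p}+o_N(1)$; letting $N\to\infty$ and invoking $\beta_p^+(Y)\le\beta_p(Y)$ from \eqref{eq:trivial beta+ beta-} gives the claim.

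The main obstacle is conceptual rather than computational: the $T_{\t_j}$ are \emph{not} classical Littlewood--Paley projections, since the $\t_j$ have heavy (full-support) tails, so almost-orthogonality/finite-overlap arguments are unavailable and one genuinely needs the exact martingale representation. The delicate points are (i) arranging the translated grids so that the $t$-average reproduces the multiplier \emph{exactly} while the grids remain nested for each fixed $t$ — this is what makes Garling's inequality applicable and keeps the constant at $\beta_p^+(Y)$, with no stray $M_0$-term surviving in the limit — and (ii) the convergence bookkeeping for the bi-infinite sum, which is routine: one passes from finite sums (with constant uniform in the truncation) to the infinite sum either via $\sum_{|j|\le N}\t_j\to 1$ or by martingale convergence on the $D^{t}$ side. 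Everything else is the trigonometric identity above together with the single Fourier-transform computation for the tent.
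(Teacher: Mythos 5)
Your proposal is correct: the telescoping identity $\t_k=g_k-g_{k+1}$ with $g_k(x)=(\sin(2^kx)/(2^kx))^2$, the identification of $T_{g_k}$ with the shift-average of conditional expectations onto translated dyadic grids of mesh $2^{k+1}$ (so that $T_{\t_k}$ is a shift-average of genuine martingale differences for each fixed translation), the application of Garling's one-sided inequality at each fixed shift followed by Jensen and Fubini, and the disposal of the unsigned initial term $\mathscr{E}^t_{-N-2}f$ as $N\to\infty$ are all sound. This is precisely the argument the paper invokes by reference rather than proving — Bourgain's translated-dyadic-grid representation, appearing as \cite[Proposition~5.10]{HNVW} with $h=k=\1_{[0,1/2]}-\1_{[1/2,1]}$, whose squared Fourier transform produces exactly the $\t_j$ — so you have reconstructed the cited proof; the only point left implicit is the routine restriction to the finitely many coarse cells meeting $\supp(f)$ and normalization of Lebesgue measure to a probability measure so that Garling's inequality applies verbatim (the inequality being homogeneous, no constant is lost).
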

Proposition~\ref{prop:littlewood paley} is due implicitly to Bourgain~\cite{Bourgain:86}, where it is proved as an intermediate step towards a more usual form of the Littlewood--Paley inequality involving the localized bump functions $\{\psi_k\}_{k\in \Z}$ in place of $\{\t_k\}_{k\in \Z}$, but with a more complicated dependence on the UMD constant. The above formulation of Proposition~\ref{prop:littlewood paley} appears explicitly as the special case $h=k=\1_{[0,1/2]}-\1_{[1/2,1]}$ of Proposition~5.10 in~\cite{HNVW} (where we are using here the notation of~\cite[Proposition~5.10]{HNVW}).

 In subsequent arguments we shall use Proposition~\ref{prop:littlewood paley} in addition to some  auxiliary estimates concerning the bump functions $\{\psi_k\}_{k\in Z}$, which are valid for arbitrary Banach space targets. These estimates rely on the fact that if $\mathfrak{m}\in L_1(\R)$  then by Young's inequality we have
\begin{equation}\label{eq:easy multiplier}
 \forall\, f\in L_p(\R,Y),\qquad  \|T_{\mathfrak{m} } f\|_{L_p(\R,Y)}=\left\|\left(\mathcal{F}^{-1}\mathfrak{m}\right)*f\right\|_{L_p(\R,Y)}\leq\left\|\mathcal{F}^{-1}\mathfrak{m}\right\|_{L_1(\R)}\|f\|_{L_p(\R,Y)}.
\end{equation}

\begin{lemma}\label{lem:psi vs theta} Suppose that $p\in [1,\infty]$ and that $(Y,\|\cdot\|_Y)$ is a Banach space. Then for every $k\in \Z$,
\begin{equation}\label{psi theta comparison}
\forall\, f\in L_p(\R,Y),\qquad   \Norm{T_{\psi_k}f}{L_p(\R,Y)}\lesssim\Norm{T_{\t_k}f}{L_p(\R,Y)}.
\end{equation}
\end{lemma}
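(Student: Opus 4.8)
The plan is to factor the multiplier $T_{\psi_k}$ through $T_{\t_k}$. Concretely, I will exhibit for each $k\in\Z$ a function $\nu_k\colon\R\to\R$ with $\psi_k=\nu_k\,\t_k$ as a pointwise product and $\Norm{\F^{-1}\nu_k}{L_1(\R)}\le C$ for a universal constant $C$. Granting this, the composition rule for Fourier multipliers gives $T_{\psi_k}=T_{\nu_k}T_{\t_k}$ (which is legitimate here because $\nu_k$ will be a Schwartz function, so $T_{\nu_k}$ is convolution with the $L_1$ kernel $\F^{-1}\nu_k$), and \eqref{eq:easy multiplier} applied to $T_{\nu_k}$ yields for every $f\in L_p(\R,Y)$
\[
\Norm{T_{\psi_k}f}{L_p(\R,Y)}=\Norm{T_{\nu_k}\big(T_{\t_k}f\big)}{L_p(\R,Y)}\le \Norm{\F^{-1}\nu_k}{L_1(\R)}\,\Norm{T_{\t_k}f}{L_p(\R,Y)}\le C\,\Norm{T_{\t_k}f}{L_p(\R,Y)},
\]
which is precisely \eqref{psi theta comparison}. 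Moreover, since $\psi_k(x)=\psi_0(2^kx)$ and $\t_k(x)=\t_0(2^kx)$ by \eqref{eq:def psik} and \eqref{eq:def thetha k}, it is enough to construct $\nu_0$ and then set $\nu_k(x)\eqdef\nu_0(2^kx)$: this gives $\psi_k=\nu_k\,\t_k$, while $\Norm{\F^{-1}\nu_k}{L_1(\R)}=\Norm{\F^{-1}\nu_0}{L_1(\R)}$ because the $L_1$ norm of an inverse Fourier transform is invariant under dilation.

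To construct $\nu_0$, recall from \eqref{eq:def phi LP} and \eqref{eq:def psik} that $\psi_0$ is smooth with $\supp(\psi_0)\subseteq\{x\in\R:\ \tfrac12\le|x|\le 2\}$. On this set $\sin^4 x>0$, since the zeros of $\sin$ nearest to it are $0$ and $\pm\pi$, and $0<\tfrac12$ while $\pi>2$; hence $\t_0(x)=\sin^4(x)/x^2$ is smooth and bounded below by a positive universal constant on an open neighbourhood $U$ of $\supp(\psi_0)$ (one may take $U=\{\tfrac14<|x|<3\}$, using $\pi>3$). Fix a smooth cutoff $\chi\colon\R\to[0,1]$ that equals $1$ on $\supp(\psi_0)$ and is supported in $U$, and put $\rho\eqdef\chi/\t_0$; then $\rho$ is smooth and compactly supported, so $\nu_0\eqdef\psi_0\,\rho$ is smooth and compactly supported, and therefore $\F^{-1}\nu_0$ is a Schwartz function and in particular belongs to $L_1(\R)$. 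Finally $\nu_0\,\t_0=\psi_0\,\rho\,\t_0=\psi_0\,\chi=\psi_0$, where the middle equality uses $\rho\,\t_0=\chi$ and the last uses $\chi\equiv1$ on $\supp(\psi_0)$. This supplies the desired factorisation with $C=\Norm{\F^{-1}\nu_0}{L_1(\R)}$, a universal constant.

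The only genuinely non-formal point is the observation that $\t_0$ is bounded below by a positive constant on a neighbourhood of $\supp(\psi_0)$, which rests on the elementary inequality $\pi>2$ together with the specific choice of the support $[-2,-\tfrac12]\cup[\tfrac12,2]$ of $\phi$ in \eqref{eq:def phi LP}; had $\phi$ been allowed to be supported near $\pm\pi$, this step, and hence the whole argument, would fail. The remaining ingredients are routine: $T_{\nu_k}T_{\t_k}=T_{\nu_k\t_k}$ holds because $\nu_k$ is Schwartz (so the composition is just convolution against $\F^{-1}\nu_k\in L_1(\R)$ followed by the multiplier $\t_k$, equivalently multiplication of the symbols), and the dilation invariance used above is the standard identity $\F^{-1}\big[\mathfrak m(2^k\cdot)\big](\xi)=2^{-k}\big(\F^{-1}\mathfrak m\big)(2^{-k}\xi)$ combined with a change of variables.
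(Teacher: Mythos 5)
Your proof is correct and follows essentially the same route as the paper's: reduce to $k=0$ by dilation, factor $T_{\psi_0}=T_{\psi_0/\t_0}T_{\t_0}$ using that $\t_0$ is bounded below near $\supp(\psi_0)$, and bound $T_{\psi_0/\t_0}$ via Young's inequality \eqref{eq:easy multiplier} since $\psi_0/\t_0$ is smooth and compactly supported with Schwartz (hence $L_1$) inverse Fourier transform. Your cutoff $\chi$ merely makes explicit the smooth extension of $\psi_0/\t_0$ that the paper leaves implicit.
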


\begin{proof} Since for every $x\in \R$ and $k\in \Z$ we have $\t_k(x)=\t_0(2^kx)$ and $\psi_k(x)=\psi_0(2^k x)$, we also have $T_{\t_k}f(x)=T_{\t_0}(y\mapsto f(2^ky))(2^{-k}x)$ and $T_{\psi_k}f(x)=T_{\psi_0}(y\mapsto f(2^ky))(2^{-k}x)$. Consequently, it suffices to prove~\eqref{psi theta comparison} when $k=0$.  Since $\t_0$ is nonzero on the support of $\psi_0$, we can write $T_{\psi_0}f=T_{\psi_0/\t_0}T_{\theta_0}f$. The function $\psi_0/\t_0$ is smooth and compactly supported, therefore its inverse Fourier transform $g=\mathcal{F}^{-1}(\psi_0/\t_0)$ belongs to the Schwartz class of test functions $\mathscr{S}(\R)$, and in particular $g\in L_1(\R)$. Thus~\eqref{psi theta comparison} follows from~\eqref{eq:easy multiplier} with $\mathfrak{m}=\psi_0/\t_0$ and $T_{\t_0}f$ in place of $f$.
\end{proof}

 In order to facilitate the next two applications of \eqref{eq:easy multiplier}, we record the following simple observation. If $\mathfrak{m}\in L_1(\R)$ is smooth then for every $a\in (0,\infty)$ we have
\begin{multline}\label{eq:L1 vs FT}
  \left\|\mathcal{F}^{-1}\mathfrak{m}\right\|_{L_1(\R)}\le  \bigg(\int_\R \frac{\dd x}{1+(ax)^2}\bigg)\sup_{x\in \R} \left|(1+a^2x^2)\left(\mathcal{F}^{-1}\mathfrak{m}\right)(x)\right|\\=\frac{\pi}{a}\left\|\mathcal{F}^{-1}(\mathfrak{m}-a^2\mathfrak{m}'')\right\|_{L_\infty(\R)}
  \le\frac{\pi}{a} \left\|\mathfrak{m}-a^2\mathfrak{m}''\right\|_{L_1(\R)}\le \frac{\pi}{a} \left\|\mathfrak{m}\right\|_{L_1(\R)}+\pi a \left\|\mathfrak{m}''\right\|_{L_1(\R)}.
\end{multline}
Choosing $a=\sqrt{\|\mathfrak{m}\|_{L_1(\R)}/\|\mathfrak{m}''\|_{L_1(\R)}}$ in~\eqref{eq:L1 vs FT} and substituting the resulting estimate into~\eqref{eq:easy multiplier} yields
\begin{equation}\label{eq:easy multiplier2}
 \forall\, f\in L_p(\R,Y),\qquad  \|T_{\mathfrak{m} } f\|_{L_p(\R,Y)}\leq 2\pi\sqrt{\|\mathfrak{m}\|_{L_1(\R)}\|\mathfrak{m}''\|_{L_1(\R)}}\cdot \|f\|_{L_p(\R,Y)}.
\end{equation}

\begin{lemma}\label{lem:localization}
Fix $k\in \Z$ and  $p\in [1,\infty]$. Let $(Y,\|\cdot\|_Y)$ be a Banach space. Then for every $f\in L_p(\R,Y)$ and $y\in \R$ we have
\begin{equation}\label{eq:min y}
\left(\int_\R\left\|T_{\psi_k}f(x+y)-T_{\psi_k(x)}f(x)\right\|_Y^p\dd x\right)^{\frac{1}{p}}\lesssim \min\left\{1,\frac{|y|}{2^k}\right\}\left\|T_{\psi_k}f\right\|_{L_p(\R,Y)}.
\end{equation}
\end{lemma}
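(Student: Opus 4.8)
The plan is to treat the two ranges $|y|>2^{k}$ and $|y|\le 2^{k}$ separately, since the asserted bound is essentially for free in the first and requires an actual estimate only in the second. When $|y|>2^{k}$, by the triangle inequality and the translation invariance of the $L_{p}(\R,Y)$ norm,
\begin{equation*}
\Big(\int_{\R}\big\|T_{\psi_{k}}f(x+y)-T_{\psi_{k}}f(x)\big\|_{Y}^{p}\,\dd x\Big)^{1/p}\le 2\,\|T_{\psi_{k}}f\|_{L_{p}(\R,Y)}\le 2\min\Big\{1,\tfrac{|y|}{2^{k}}\Big\}\|T_{\psi_{k}}f\|_{L_{p}(\R,Y)},
\end{equation*}
the last step using $|y|>2^{k}$. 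So the content of the lemma is the estimate when $|y|\le 2^{k}$, and that is where I would do the work.

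For $|y|\le 2^{k}$ the idea is to realize the translation difference of $T_{\psi_{k}}f$ as a Fourier multiplier operator applied to $T_{\psi_{k}}f$ \emph{itself}, and then to exploit that this multiplier is a rescaling of a single fixed smooth profile whose $L_{1}$ ``mass'' is $O(|y|/2^{k})$. Concretely, since $\omega_{k}\psi_{k}=\psi_{k}$ (see~\eqref{eq:def omegak}), equivalently $T_{\omega_{k}}T_{\psi_{k}}=T_{\psi_{k}}$, one has $T_{\psi_{k}}f(\cdot+y)-T_{\psi_{k}}f=T_{\mathfrak m}\big(T_{\psi_{k}}f\big)$ with $\mathfrak m(\xi)=(e^{iy\xi}-1)\omega_{k}(\xi)$ (the sign convention of the exponent being immaterial). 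The point of inserting $\omega_{k}$ in place of $\psi_{k}$ is that $\mathfrak m$ is now a dilate of a fixed bump: using $\psi_{k}(\xi)=\psi_{0}(2^{k}\xi)$ and $\omega_{k}(\xi)=\omega_{0}(2^{k}\xi)$, one has $\mathfrak m(\xi)=n\big(2^{k}\xi\big)$, where $n(\eta)\eqdef(e^{it\eta}-1)\omega_{0}(\eta)$, $t\eqdef y/2^{k}\in[-1,1]$, and $\omega_{0}$ is a fixed smooth function supported in $\{1/4\le|\eta|\le4\}$. I would then invoke~\eqref{eq:easy multiplier2} (valid since $\mathfrak m$ is smooth and compactly supported, hence in $L_{1}(\R)$): it gives $\|T_{\mathfrak m}(T_{\psi_{k}}f)\|_{L_{p}(\R,Y)}\le 2\pi\sqrt{\|\mathfrak m\|_{L_{1}(\R)}\,\|\mathfrak m''\|_{L_{1}(\R)}}\,\|T_{\psi_{k}}f\|_{L_{p}(\R,Y)}$, and a change of variables shows $\|\mathfrak m\|_{L_{1}(\R)}=2^{-k}\|n\|_{L_{1}(\R)}$ and $\|\mathfrak m''\|_{L_{1}(\R)}=2^{k}\|n''\|_{L_{1}(\R)}$, so the scale cancels and $\sqrt{\|\mathfrak m\|_{L_{1}}\|\mathfrak m''\|_{L_{1}}}=\sqrt{\|n\|_{L_{1}}\|n''\|_{L_{1}}}$.

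It then remains to check $\|n\|_{L_{1}(\R)}\lesssim|t|$ and $\|n''\|_{L_{1}(\R)}\lesssim|t|$, which I expect to be the only genuinely computational point, although it is routine: the first follows from $|e^{it\eta}-1|\le|t\eta|\le4|t|$ on $\supp\omega_{0}$, and for the second one expands $n''=-t^{2}e^{it\eta}\omega_{0}+2ite^{it\eta}\omega_{0}'+(e^{it\eta}-1)\omega_{0}''$ and integrates over the fixed compact support of $\omega_{0}$ to get a bound $O(t^{2}+|t|)$, which is $\lesssim|t|$ because $|t|\le1$ forces $t^{2}\le|t|$. Combining, $\sqrt{\|n\|_{L_{1}}\|n''\|_{L_{1}}}\lesssim|t|=|y|/2^{k}=\min\{1,|y|/2^{k}\}$, yielding~\eqref{eq:min y} in this regime; together with the trivial bound above this finishes the proof. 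The only ``obstacle'', such as it is, is recognizing that the cancellation estimate beats the trivial bound precisely once $|y|\le2^{k}$ (the price being the $t^{2}$ term in $n''$, harmless exactly under this restriction), so the case split is essential rather than cosmetic.
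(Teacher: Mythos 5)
Your proof is correct and follows essentially the same route as the paper: the trivial triangle-inequality bound for the first half of the minimum, and for $|y|\le 2^k$ the realization of the translation difference as $T_{\rho_y\omega_k}$ applied to $T_{\psi_k}f$ (using $T_{\omega_k}T_{\psi_k}=T_{\psi_k}$), followed by the estimate~\eqref{eq:easy multiplier2}, the scale invariance of $\|\mathfrak m\|_{L_1}\|\mathfrak m''\|_{L_1}$ under dilation, and the same pointwise bounds on $(e^{it\eta}-1)\omega_0$ and its second derivative. No gaps.
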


\begin{proof}
Write $g(x)\eqdef T_{\psi_k}f(x+y)-T_{\psi_k}f(x)$. The fact that the norm of $g$ in $L_p(\R,Y)$ is at most twice the norm of $T_{\psi_k}f$ in $L_p(\R,Y)$ follows from the triangle inequality in $L_p(\R,Y)$. For the second estimate in~\eqref{eq:min y}, note that $g=T_{\rho_y\omega_k}T_{\psi_k}f$, where $\rho_y(x)=e^{-ixy}-1$. Hence, it remains to show that for every $h\in L_p(\R,Y)$ and $y\in [-2^{k},2^k]$ we have $\|T_{\rho_y \omega_k}h\|_{L_p(\R,Y)}\lesssim |y|2^{-k}\|h\|_{L_p(\R,Y)}$. By~\eqref{eq:easy multiplier2}, this will follow if we prove that $\|\rho_y \omega_k\|_{L_1(\R)}\cdot \|(\rho_y\omega_k)''\|_{L_1(\R)}\lesssim (2^{-k}y)^2$. Since $\rho_y(x)\omega_k(x)=(\rho_{2^{-k}y}\omega_0)(2^ky)$ and for every smooth $\mathfrak{m}\in L_1(\R)$ the product $\|x\mapsto \mathfrak{m}(\lambda x)\|_{L_1(\R)}\cdot \|(x\mapsto \mathfrak{m}(\lambda x))''\|_{L_1(\R)}$ is independent of $\lambda\in (0,\infty)$, it suffices to show that \begin{equation}\label{eq:rho z}
\forall\, z\in [-1,1],\qquad \|\rho_z\omega_0\|_{L_1(\R)}\lesssim |z|\qquad \mathrm{and}\qquad \|(\rho_z\omega_0)''\|_{L_1(\R)}\lesssim |z|.
\end{equation}

The point-wise estimate $|\rho_z(x)|\le |z x|$ combined with the fact that the function $x\mapsto x\omega_0(x)$ is in $L_1(\R)$ implies the first assertion in~\eqref{eq:rho z}.  For the second assertion in~\eqref{eq:rho z}, compute directly that
\begin{equation*}
  (\rho_z\omega_0)''(x)=-z^2e^{-ixz}\omega_0(x)-i2ze^{-ixz}\omega_0'(x)+(e^{-ixz}-1)\omega_0''(x).
\end{equation*}
We therefore have the following point-wise estimate, which holds true whenever $|z|\le 1$.
\begin{equation*}
  |(\rho_z\omega_0)''(x)|\leq |z|^2 |\omega_0(x)|+2 |z\omega_0'(x)|+|zx\omega_0''(x)|\le |z|\big( |\omega_0(x)|+2 |\omega_0'(x)|+ |x\omega_0''(x)|\big).
\end{equation*}
The second assertion in~\eqref{eq:rho z} is now a consequence of the fact that the three functions $\omega_0,\omega_0'$ and $x\mapsto x\omega_0''(x)$ all belong to $L_1(\R)$. This concludes the proof of Lemma~\ref{lem:localization}\qedhere
\end{proof}

\begin{lemma}\label{lem:localization2}
Fix $k\in \Z$,  $p\in [1,\infty]$ and $\alpha\in (0,\infty)$. Let $(Y,\|\cdot\|_Y)$ be a Banach space. Then for every smooth and compactly supported $f:\R\to Y$ we have
\begin{equation}\label{eq:1 plus alpha}
\left\|T_{\psi_k}f\right\|_{L_p(\R,Y)}\lesssim (1+\alpha) 2^{2\alpha(k+2)} \left\|T_{\psi_k}(-\Delta)^{\alpha}f\right\|_{L_p(\R,Y)}.
\end{equation}
\end{lemma}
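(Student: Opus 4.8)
The plan is to realize $T_{\psi_k}f$ as the image of $T_{\psi_k}(-\Delta)^{\alpha}f$ under a convolution operator whose kernel is controlled by~\eqref{eq:easy multiplier2}. Under the normalization fixed in the excerpt, $-\Delta$ is the multiplier with symbol $x\mapsto x^2$, so $(-\Delta)^{\alpha}$ has symbol $x\mapsto |x|^{2\alpha}$; for $f$ smooth and compactly supported, $T_{\psi_k}(-\Delta)^{\alpha}f$ then has Fourier transform $x\mapsto \psi_k(x)|x|^{2\alpha}\mathcal{F}f(x)$, which is smooth and compactly supported away from the origin (recall $\psi_k$ is supported in $\{|x|\in[2^{-k-1},2^{-k+1}]\}$), so all the Fourier-side manipulations below act on genuine Schwartz-type functions. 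Put
$$
\mathfrak{m}_k(x)\eqdef \frac{\omega_k(x)}{|x|^{2\alpha}},
$$
which is smooth, bounded, and compactly supported because $\omega_k$ vanishes near $0$. Since $\omega_k\psi_k=\psi_k$ one has the symbol identity $\mathfrak{m}_k(x)\cdot\psi_k(x)|x|^{2\alpha}=\psi_k(x)$, hence $T_{\psi_k}f=T_{\mathfrak{m}_k}\bigl(T_{\psi_k}(-\Delta)^{\alpha}f\bigr)$, and by~\eqref{eq:easy multiplier2} it suffices to prove that $\sqrt{\|\mathfrak{m}_k\|_{L_1(\R)}\|\mathfrak{m}_k''\|_{L_1(\R)}}\lesssim (1+\alpha)2^{2\alpha(k+2)}$.

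To bound these two $L_1$ norms I would rescale: writing $\mathfrak{m}_k(x)=2^{2\alpha k}\mathfrak{n}(2^k x)$ with $\mathfrak{n}(\eta)\eqdef |\eta|^{-2\alpha}\omega_0(\eta)$, and using (as observed in the proof of~\eqref{eq:easy multiplier2}) that $\|x\mapsto\mathfrak{m}(\lambda x)\|_{L_1(\R)}\cdot\|(x\mapsto\mathfrak{m}(\lambda x))''\|_{L_1(\R)}$ does not depend on $\lambda\in(0,\infty)$, we get $\sqrt{\|\mathfrak{m}_k\|_{L_1}\|\mathfrak{m}_k''\|_{L_1}}=2^{2\alpha k}\sqrt{\|\mathfrak{n}\|_{L_1}\|\mathfrak{n}''\|_{L_1}}$. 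Now $\omega_0$ is a fixed smooth function supported in $\{|\eta|\in[1/4,4]\}$, and there $|\eta|^{-2\alpha}\le 4^{2\alpha}=2^{4\alpha}$, $|(|\eta|^{-2\alpha})'|=2\alpha|\eta|^{-2\alpha-1}\lesssim\alpha 2^{4\alpha}$, and $|(|\eta|^{-2\alpha})''|=2\alpha(2\alpha+1)|\eta|^{-2\alpha-2}\lesssim (1+\alpha)^2 2^{4\alpha}$; expanding $\mathfrak{n}''$ by the Leibniz rule and using that $\omega_0,\omega_0',\omega_0''\in L_1(\R)$ with universal norms gives $\|\mathfrak{n}\|_{L_1}\lesssim 2^{4\alpha}$ and $\|\mathfrak{n}''\|_{L_1}\lesssim (1+\alpha)^2 2^{4\alpha}$, whence $\sqrt{\|\mathfrak{n}\|_{L_1}\|\mathfrak{n}''\|_{L_1}}\lesssim (1+\alpha)2^{4\alpha}$. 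Combining, $\sqrt{\|\mathfrak{m}_k\|_{L_1}\|\mathfrak{m}_k''\|_{L_1}}\lesssim (1+\alpha)2^{2\alpha k}2^{4\alpha}=(1+\alpha)2^{2\alpha(k+2)}$, which is what~\eqref{eq:easy multiplier2} requires, and~\eqref{eq:1 plus alpha} follows.

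The one delicate bookkeeping point is that the estimate should produce a single power of $(1+\alpha)$ and not $(1+\alpha)^2$: this works precisely because the worst term, $\|\mathfrak{n}''\|_{L_1}\lesssim (1+\alpha)^2 2^{4\alpha}$, is paired under the square root in~\eqref{eq:easy multiplier2} with the much smaller zeroth-order factor $\|\mathfrak{n}\|_{L_1}\lesssim 2^{4\alpha}$. I expect no real difficulty beyond this; the remaining steps (the symbol identity from $\omega_k\psi_k=\psi_k$, the scaling normalization $\omega_k(x)=\omega_0(2^kx)$, and the elementary pointwise derivative bounds on $|\eta|^{-2\alpha}$ over $\{1/4\le|\eta|\le 4\}$) are routine, as is the verification that for smooth compactly supported $f$ all operators in the chain act on admissible test functions.
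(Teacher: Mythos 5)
Your proposal is correct and follows essentially the same route as the paper: the identity $T_{\psi_k}f=T_{\omega_k/|\cdot|^{2\alpha}}\bigl(T_{\psi_k}(-\Delta)^{\alpha}f\bigr)$ coming from $\omega_k\psi_k=\psi_k$, the rescaling to the $k=0$ symbol (your $\mathfrak{n}$ is the paper's $\xi_0$), and the application of~\eqref{eq:easy multiplier2} with $\|\xi_0\|_{L_1}\lesssim 2^{4\alpha}$ and $\|\xi_0''\|_{L_1}\lesssim(1+\alpha)^2 2^{4\alpha}$, the square root recovering the single power of $1+\alpha$ exactly as you note. No gaps.
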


\begin{proof} Recalling the definition of $\omega_k:\R\to [0,\infty)$ that is given in~\eqref{eq:def omegak}, since $T_{\omega_k}T_{\psi_k}=T_{\psi_k}$
we have $T_{\psi_k}f=(-\Delta)^{-\alpha}T_{\omega_k} T_{\psi_k}(-\Delta)^{\alpha}f=T_{\xi_k}T_{\psi_k}(-\Delta)^{\alpha}f$, where
\begin{equation*}
\forall\, x\in \R\setminus \{0\},\qquad  \xi_k(x)\eqdef \frac{\omega_k(x)}{|x|^{2\alpha}}
  =2^{2k\alpha}\frac{\omega_0(2^k x)}{|2^k x|^{2\alpha}}
  =2^{2k\alpha}\xi_0(2^k x).
\end{equation*}
It therefore suffices to show that $\xi_0$ is the Fourier transform of a function in $L_1(\R)$ of norm at most a constant multiple of $(1+\alpha) 2^{4\alpha}$. To this end, we will again apply the estimate~\eqref{eq:easy multiplier2}.
First, since $|x|^{-1}\leq 4$ on the support of $\omega_0$ and $\omega_0\in L_1(\R)$, we have $\|\xi_0\|_{L_1(\R)}\lesssim 2^{4\alpha}$.
Also,
\begin{equation*}
\forall\, x\in \R\setminus\{0\},\qquad   \xi_0''(x)=\frac{2\alpha(2\alpha+1)}{|x|^{2\alpha}}\cdot \frac{\omega_0(x)}{x^2}-\frac{4\alpha}{|x|^{2\alpha}}\cdot \frac{\omega_0'(x)}{|x|}+\frac{\omega_0''(x)}{|x|^{2\alpha}}.
\end{equation*}
Using again that $|x|^{-1}\leq 4$ on the support of $\omega_0$, combined with the fact that  the three functions $\omega_0''$, $x\mapsto \omega_0(x)/x^2$ and $x\mapsto \omega_0'(x)/x$ are all in $L_1(\R)$, it follows that $\|\xi_0''\|_{L_1(\R)}\lesssim (2\alpha+1)^22^{4\alpha}$. So,
\begin{equation*}
\|T_{\xi_0}\|_{L_p(\R,Y)\to L_p(\R,Y)}\stackrel{\eqref{eq:easy multiplier2}}{\lesssim} \sqrt{\|\xi_0\|_{L_1(\R)}\|\xi_0''\|_{L_1(\R)}}\le \sqrt{4^{2\alpha}\cdot (2\alpha+1)^24^{2\alpha}}\lesssim (1+\alpha)4^{2\alpha}. \qedhere
\end{equation*}
\end{proof}

\subsection{Type and Cotype}\label{sec:cotype} For $p\in [1,2)$ and $q\in [2,\infty)$, the type $p$ constant and the cotype $q$ constant of a Banach space $(Y,\|\cdot\|_Y)$, denoted $T_p(X)$ and $C_q(Y)$, respectively, are defined to be the infimum over those $T,C\in [1,\infty]$ such that for every $n\in\N$ and every $x_1,\ldots,x_n\in Y$,
\begin{equation}\label{eq:type cotype def}
\frac{1}{C}\Big(\sum_{j=1}^n \|x_j\|_Y^q\Big)^{\frac{1}{q}}\le \E\bigg[\Big\|\sum_{j=1}^n \e_j x_j\Big\|_Y\bigg]\le T\Big(\sum_{j=1}^n \|x_j\|_Y^p\Big)^{\frac{1}{p}},
\end{equation}
where the expectation is with respect to $\e=(\e_1,\ldots,\e_n)\in \{-1,1\}^n$ chosen uniformly at random. The smallest $T,C\in (0,\infty]$ for which~\eqref{eq:type cotype def} holds true are denoted $T_p(Y),C_q(Y)$, respectively.  Any UMD Banach space $(Y,\|\cdot\|_Y)$ admits an equivalent uniformly convex norm~\cite{Mau75,Ald79}, and hence it has finite cotype~\cite{FP74,Pis75-martingales}. The following lemma makes this qualitative statement quantitative in the case of cotype. A similar (and simpler) argument yields a quantitative bound in the case of type as well (see Remark~\ref{rem:np duality type} below), but in what follows only the case of cotype will be used.

\begin{lemma}[Cotype in terms of $\beta(Y)$]\label{lem:cotype bound}
There exists a universal constant $\kappa\in (1,\infty)$ such that for every $\beta\in [1,\infty)$, if $(Y,\|\cdot\|_Y)$ is a UMD Banach space with $\beta(Y)\le \beta$ then then $(Y,\|\cdot\|_Y)$ it has cotype $\kappa\beta$, and moreover $C_{\kappa \beta}(Y)\le \kappa$.
\end{lemma}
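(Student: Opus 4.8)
The plan is to prove a quantitative contrapositive: there is a large universal constant $\kappa$ such that if $(Y,\|\cdot\|_Y)$ is any Banach space whose cotype‑$\kappa\beta$ constant exceeds $\kappa$ — i.e. $C_{\kappa\beta}(Y)>\kappa$, which by convention includes the case in which $Y$ has no finite cotype $\kappa\beta$ — then $\beta(Y)>\beta$. The mechanism is classical: a bad cotype constant forces an almost isometric finite‑dimensional $\ell_\infty$ inside $Y$, and finite‑dimensional $\ell_\infty$'s have large UMD constants. I would fix the value of $\kappa$ only at the very end, after all the universal constants entering the argument have been exhibited.

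Two inputs are used. The first is a quantitative form of the Maurey--Pisier theorem: there is a universal constant $A\in[1,\infty)$ such that for every Banach space $Z$ and every integer $m$ with $A\log m\ge 2$, if $\ell_\infty^m$ does not embed $2$‑isomorphically into $Z$, then $Z$ has cotype $A\log m$ with $C_{A\log m}(Z)\le A$; here, as usual, by Kahane's inequality \cite{Kah85} it is immaterial in which $L_r$‑norm the Rademacher averages defining cotype are computed. I would apply this with $Z=Y$ and $m=n:=\lfloor e^{\kappa\beta/A}\rfloor$, so that $A\log n\le\kappa\beta$. Were $\ell_\infty^n$ not $2$‑isomorphic to a subspace of $Y$, we would get $C_{A\log n}(Y)\le A$, and since $q\mapsto C_q(Y)$ is non‑increasing and $A\log n\le\kappa\beta$, also $C_{\kappa\beta}(Y)\le A<\kappa$, contradicting the hypothesis; hence there is a linear $T\colon\ell_\infty^n\to Y$ with $\|x\|_{\ell_\infty^n}\le\|Tx\|_Y\le 2\|x\|_{\ell_\infty^n}$ for all $x$. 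The second input is the classical estimate $\beta(\ell_\infty^n)\gtrsim\log n$ (see, e.g., \cite{Burkholder}). Since $T$ carries any $\ell_\infty^n$‑valued martingale to a $Y$‑valued martingale while distorting the norms entering the UMD inequality by a factor at most $2$, we obtain $\beta(Y)\ge\tfrac12\beta(\ell_\infty^n)\gtrsim\log n$, and because $n=\lfloor e^{\kappa\beta/A}\rfloor$ and $\beta\ge1$ this gives $\beta(Y)\ge c_0\kappa\beta$ for some universal $c_0>0$. Choosing $\kappa$ to be a universal constant with $c_0\kappa>1$ (also $\kappa>A$, and large enough that $A\log n\ge 2$ holds automatically) yields $\beta(Y)>\beta$, the desired contradiction.

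The content of the argument is entirely in the two quantitative inputs, both of which must hold with constants that do not depend on $\beta$: the quantitative Maurey--Pisier statement — failure of cotype $q$ with a bounded constant forces an $\ell_\infty$ of dimension exponential in $q$ to embed $2$‑isomorphically — and the lower bound $\beta(\ell_\infty^n)\gtrsim\log n$. Obtaining these with genuinely universal constants, in the form used above, is the delicate point; the remaining bookkeeping is routine, and in particular the exponent $\kappa\beta$ in the statement is forced by matching $A\log n$ to $\kappa\beta$ in the Maurey--Pisier step. The ``similar (and simpler)'' argument for type mentioned after the statement runs along identical lines, with $\ell_\infty^n$ replaced by $\ell_1^n$ and cotype by type, using $\beta(\ell_1^n)=\beta(\ell_\infty^n)\gtrsim\log n$ (the UMD constant is self‑dual); alternatively, once the type statement is available one recovers the cotype statement for $Y$ from $C_{p'}(Z^*)\le T_p(Z)$ applied to $Z=Y^*$, since $\beta(Y^*)=\beta(Y)$ and $Y$ embeds isometrically into $Y^{**}$.
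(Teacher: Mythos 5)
Your overall strategy is sound and is in fact parallel to the paper's: argue the contrapositive by showing that a large cotype-$q$ constant forces a $2$-isomorphic copy of a "bad" finite-dimensional space inside $Y$ (or a space with the same UMD constant), and then lower-bound the UMD constant of that copy. You use the $\ell_\infty$-endpoint of the Maurey--Pisier theorem applied to $Y$ itself together with $\beta(\ell_\infty^m)\gtrsim\log m$; the paper instead passes to the dual, shows $ST_p(Y^*)\gtrsim qC_q(Y)$, invokes Pisier's quantitative theorem on $\ell_p^m$-subspaces of spaces with large stable type constant (for $p$ close to $1$), and then proves from scratch the folklore bound $\beta(\ell_p^m)\gtrsim\min\{q,(\log m)^{1-1/q}\}$ via an explicit dyadic martingale in $L_p(\{-1,1\}^k)$. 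The arithmetic and bookkeeping in your write-up (monotonicity of $q\mapsto C_q$, the factor $2$ loss under a $2$-isomorphic embedding of martingales, the choice $m=\lfloor e^{\kappa\beta/A}\rfloor$) are all correct.

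The gap is the first input, which you state but do not prove and for which you give no precise reference: the assertion that there is a \emph{universal} $A$ such that if $\ell_\infty^m$ does not $2$-embed into $Z$ then $C_{A\log m}(Z)\le A$. This is exactly where all the content of the lemma lives, and it is not a statement one can simply cite. The qualitative Maurey--Pisier theorem is of course classical, but the quantitative form you need --- with the cotype exponent equal to a universal multiple of $\log m$ \emph{and} the cotype constant bounded by a universal constant at that exponent --- requires tracking how the constants in the extraction of $\ell_\infty^k$ from a bad cotype configuration depend on $q$; the known extraction lemmas degrade when the bad configuration consists of vectors of widely varying norms or when the Rademacher average is large relative to the number of vectors, and a test against $Z=L_q$ suggests that without care one picks up at least logarithmic corrections in the exponent. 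This is precisely the difficulty the paper confronts head-on: the authors note that the dependence on $p$ of the constants in Pisier's theorem "is not computed in [Pis83]" and devote most of their proof to extracting it by examining the constants $C_p$, $\Phi$, $K$, $\eta$ from the relevant sources. Your proposal acknowledges that obtaining the input "with genuinely universal constants ... is the delicate point" but then does not address it; as written, the argument therefore reduces the lemma to an unproven quantitative claim rather than proving it. (Your second input, $\beta(\ell_\infty^m)=\beta(\ell_1^m)\gtrsim\log m$, is fine and in fact follows from the paper's own martingale computation in the case $p=1$.)
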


\begin{proof}
The proof below is a (somewhat tedious) combination of several results that appear in the literature. The key step is an examination of the proof of Pisier's quantitative version~\cite{Pis83} of the Maurey--Pisier theorem~\cite{MP76} for stable type. For $p\in (1,2)$ let $ST_p(Y^*)$ be the stable type $p$ constant of the dual space $Y^*$. Namely,  $ST_p(Y^*)$ is the infimum over those $S\in (0,\infty]$ such that for every $n\in \N$, every $x_1^*,\ldots,x_n^*\in Y^*$ satisfy
\begin{equation}\label{eq:STp}
\mathbb{E}\bigg[\Big\|\sum_{j=1}^n \theta_j x_j^*\Big\|_{Y^*}\bigg]\le S\Big(\sum_{j=1}^n \left\|x_j^*\right\|_{Y^*}^p\Big)^{\frac{1}{p}},
\end{equation}
where in~\eqref{eq:STp} the expectation is with respect to i.i.d. standard symmetric $p$-stable random variables $\{\theta_j\}_{j=1}^n$, i.e., the characteristic function of $\theta_1$ is
\begin{equation}\label{eq:p stable def}
\forall\, t\in \R,\qquad \E\left[e^{i t\theta_1}\right]=e^{-|t|^p}.
\end{equation}
It  follows from~\eqref{eq:p stable def} that $\E\left[|\theta _1|\right]\asymp 1/(p-1)$; see e.g.~\cite[Sec.~XVII]{Fel71}. By Jensen's inequality and Kahane's inequality, this implies that $T_p(Y^*)\le (p-1) ST_p(Y^*)$. Since $C_q(Y)\le T_p(Y^*)$, where $q=p/(p-1)$ (see e.g. \cite[Sec.~6]{Mau03}), we deduce that
\begin{equation}\label{eq:stp lower bound}
ST_p(Y^*)\gtrsim \frac{C_q(Y)}{p-1}\asymp qC_q(Y).
\end{equation}

Suppose from now on that $p\in (1,3/2]$ (in the argument below we only use that $p$ is bounded away from $2$ by a universal constant). Equivalently, $q\in [3,\infty)$.
It follows from the proof of the main theorem of~\cite{Pis83} that there exists a universal constant $c\in (0,1)$ such that if $m\in \N$ satisfies
\begin{equation}\label{eq:pisier m bound}
m^{\frac{1}{q}}\le \frac{c}{q}ST_p(Y^*) \le c(p-1)ST_p(Y^*).
\end{equation}
then $Y^*$ contains a $2$-isomorphic copy of $\ell_p^m$.  Unfortunately, while the dependence of $m$ on $ST_p(Y^*)$ that is stated in~\eqref{eq:pisier m bound} is also stated explicitly in~\cite{Pis83}, the dependence on $p$, which is crucial for us here, is not computed in~\cite{Pis83}. However, one can verify~\eqref{eq:pisier m bound} by examining the dependencies on $p$ of certain constants that appear in~\cite{Pis83}, and substituting these dependencies into the proof of~\cite{Pis83}. Specifically,  the constant $C_p$ of Proposition 1.3 of~\cite{Pis83} was computed in~\cite{MP84} to be
$$
C_p=\left(\frac{1}{\int_0^\infty \frac{\sin v}{v^p}dv}\right)^{\frac{1}{p}}=2\left(\frac{\Gamma\left(\frac{p+1}{2}\right)}{\sqrt{\pi} \Gamma\left(1-\frac{p}{2}\right)}\right)^{\frac{1}{p}}.
$$
Thus, recalling that $p\in (1,3/2)$, we see that $C_p$ is bounded above and below by positive universal constants. The parameter $\Phi$ of Lemma 1.4 of~\cite{Pis83} can be estimated via a direct computation (e.g., using the last line of page 975 of~\cite{FG11}) to give $\Phi\lesssim 1/(p-1)$. The proof of~\cite{Pis83} uses only two additional unspecified parameters, denoted $K$ and $\eta$, that appear in Lemma 1.5 of~\cite{Pis83}. In Proposition 2 of~\cite{JS82}  it is shown that one can take $K=2$ and $\eta=(2-p)/(8p(q+1)^q)$. A direct substitution of these estimates into the proof of Theorem 2.1 of~\cite{Pis83} now yields~\eqref{eq:pisier m bound}.

Note that
\begin{equation}\label{eq:beta of ellp}
\beta(\ell_p^m)\gtrsim \min \left\{q,\left(\log m\right)^{\frac{1}{p}}\right\}=\min\left\{q,\left(\log m\right)^{1-\frac{1}{q}}\right\}.
\end{equation}
While~\eqref{eq:beta of ellp} is folklore, we did not find it in the literature so we briefly sketch the relevant computation.  Let $k\in \N$ be the largest integer such that $2^k\le m$. Let $\mu$ be the uniform probability measure on the discrete hypercube $\{-1,1\}^k$, and think of $\ell_p^m$ as containing an isometric copy of $L_p(\mu)$. Define $M_0,\ldots,M_k:\{-1,1\}^k\to L_p(\mu)$ by setting $M_0\equiv 1$ and for $j\in \{1,\ldots,k\}$ defining
 $$\forall\, \e,\d\in \{-1,1\}^k,\qquad M_j(\e)(\d)\eqdef \prod_{\ell=1}^k(1+\e_\ell\d_\ell)=2^k\1_{\{(\e_1,\ldots,\e_j)=(\d_1,\ldots,\d_j)\}}.$$
Then $\{M_j\}_{j=0}^k$ is a martingale with respect to the natural coordinate filtration of $\{-1,1\}^k$.

Observe that
\begin{equation}\label{eq:Mk upper}
\bigg(\int_{\{-1,1\}^k} \left\|M_k(\e)\right\|_{L_p(\mu)}^2\dd \mu(\e)\bigg)^{\frac12}=2^{\frac{k(p-1)}{p}}=2^{\frac{k}{q}}.
\end{equation}
For every $\e,\d\in \{-1,1\}^k$ write $j(\e,\d)=0$ if $\e_1\neq \d_1$ and otherwise let $j(\e,\d)$ be the largest $j\in \{1,\ldots,k\}$ such that $\epsilon_i = \delta_i$ for all $i\in \{1,\ldots,j\}$.  With this notation,
\begin{equation}\label{eq:S lower}
S(\e,\d)\eqdef \Big(\sum_{j=1}^k \left(M_j(\e)(\d)-M_{j-1}(\e)(\d)\right)^2\Big)^{\frac12}\asymp 2^{j(\e,\d)}-1.
\end{equation}
Note that for every $j\in \{0,\ldots,k\}$ we have
\begin{equation}\label{eq:product measure estimate j}
\mu\times \mu\left(\left\{(\e,\d)\in \{-1,1\}^k\times \{-1,1\}^k:\ j(\e,\d)=j\right\}\right)\asymp \frac{1}{2^{j}}.
\end{equation}
Hence, using the triangle inequality in $L_p(\mu)$ and Khinchine's inequality, we have
\begin{multline*}
\bigg(\int_{\{-1,1\}^k}\int_{\{-1,1\}^k}\Big\|\sum_{j=1}^k \eta_j\left(M_j(\e)-M_{j-1}(\e)\right)\Big\|_{L_p(\mu)}^2\dd\mu(\e)\dd \mu(\eta)\bigg)^{\frac12}\\\gtrsim \bigg(\int_{\{-1,1\}^k}\int_{\{-1,1\}^k}S(\e,\d)^p\dd\mu(\e)\dd\mu(\d)\bigg)^{\frac{1}{p}}\stackrel{\eqref{eq:S lower}\wedge\eqref{eq:product measure estimate j}}{\gtrsim} \Big(\sum_{j=1}^k 2^{j(p-1)}\Big)^{\frac{1}{p}}\asymp 2^{\frac{k}{q}}\cdot \min\left\{q,k^{\frac{1}{p}}\right\},
\end{multline*}
which, when contrasted with~\eqref{eq:Mk upper}, implies~\eqref{eq:beta of ellp}.

A combination of~\eqref{eq:stp lower bound}, \eqref{eq:pisier m bound} and~\eqref{eq:beta of ellp} implies that there exist universal $a,b \in (0,1/2)$ such that
\begin{equation*}\label{eq:2 kapp}
\forall\, q\in [3,\infty),\qquad \beta(X)\ge a\min \left\{q,\big(q\log \left(1+bC_q(X)\right)\big)^{1-\frac{1}{q}}\right\}.
\end{equation*}
For every $q\ge \beta(X)/a$ this gives $a(q\log \left(1+bC_q(X)\right))^{2/3}\le a(q\log \left(1+bC_q(X)\right))^{1-1/q}\le \beta(X)$. Hence, $C_q(X)\le e^{3a/(2e)}/(ba^{3/2})$, since $q\ge \beta(X)/a$. By choosing $\kappa= \max\{1/a,e^{3a/(2e)}/(ba^{3/2})\}$, the proof of Lemma~\ref{lem:cotype bound} is complete.
\end{proof}

\begin{remark}\label{rem:np duality type}
{\em The same argument as in the proof of Lemma~\ref{lem:cotype bound}, without the need to use duality, shows that $T_{\kappa \beta/(\kappa\beta-1)}(Y)\le \kappa$. Since we shall not need this fact below, the details are omitted.}
\end{remark}

\subsection{UMD-valued Riesz potentials, Sobolev spaces and interpolation}\label{sec:bessel} Fix $n\in \N$ and $s,p\in (0,\infty)$. Suppose that $(Y,\|\cdot\|_Y)$ is a Banach space. If $f:\R^n\to Y$ is smooth and compactly supported then its homogeneous $(s,p)$-Riesz potential (semi)norm is defined as usual by
\begin{equation}\label{eq:def bessel}
\|f\|_{H_{s,p}(\R^n,Y)}\eqdef \left\|(-\Delta)^{\frac{s}{2}}f\right\|_{L_p(\R^n,Y)}=\left\|T_{\xi\mapsto \|\xi\|_2^s}f\right\|_{L_p(\R^n,Y)}.
\end{equation}
($\|\cdot\|_{H_{s,p}(\R^n,Y)}$ is sometimes denoted $\|\cdot\|_{\dot{H}_{s,p}(\R^n,Y)}$, but we use a simpler notation since  nonhomogeneous Riesz potentials do not occur in what follows.) The Banach space $H_{s,p}(\R^n,Y)$ is the completion of the  smooth and compactly support functions $f:\R^n\to Y$ under the norm $\|\cdot\|_{H_{s,p}(\R^n,Y)}$.

Throughout the ensuing discussion we shall use standard notation and basic facts from complex interpolation theory, as appearing in~\cite{BeLo}. The following lemma provides quantitative control on the behavior of the spaces $H_{s,p}(\R^n,Y)$ under complex interpolation when $Y$ is a UMD Banach space.

\begin{lemma}\label{lem:interpolation bessel} Let $(Y,\|\cdot\|_Y)$ be a UMD Banach space. Fix $p\in [1,\infty)$ and $s,\sigma\in (0,\infty)$ with $s<\sigma$. Suppose also that $\theta\in (0,1)$ and define $t=(1-\theta)s+\theta\sigma$. Then every $f\in H_{t,p}(\R^n,Y)$ satisfies
$$
\left\|f\right\|_{[H_{s,p}(\R^n,Y),H_{\sigma,p}(\R^n,Y)]_\theta}\lesssim \beta_p(Y)e^{\frac{\pi(\sigma-s)}{4}\sqrt{\theta(1-\theta)}}\left\|f\right\|_{H_{t,p}(\R^n,Y)}.
$$
\end{lemma}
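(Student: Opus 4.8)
The plan is to run Calderón's complex interpolation method directly (as in~\cite{BeLo}), choosing a Gaussian convergence factor tuned to produce the claimed constant. Since $H_{t,p}(\R^n,Y)$ is by definition a completion, it suffices to prove the inequality for $f$ in a dense subclass, and I would take $f$ with $\F f$ smooth and compactly supported away from the origin; such $f$ are dense in $H_{t,p}(\R^n,Y)$ because functions whose Fourier transform is supported in a compact subset of $\R^n\setminus\{0\}$ are dense in $L_p(\R^n,Y)$ for $p\in(1,\infty)$ (and the case $p=1$ is vacuous, since $\beta_1(Y)=\infty$). Put $g\eqdef(-\Delta)^{t/2}f$, so that $\|g\|_{L_p(\R^n,Y)}=\|f\|_{H_{t,p}(\R^n,Y)}$, and define on the closed strip $\overline S\eqdef\{z\in\C:0\le\Re z\le 1\}$ the family
\[
F(z)\eqdef(-\Delta)^{-\frac{(1-z)s+z\sigma}{2}}g,
\]
which is analytic, Schwartz-valued, satisfies $F(\theta)=f$, and has $F(iy)\in H_{s,p}(\R^n,Y)$ and $F(1+iy)\in H_{\sigma,p}(\R^n,Y)$ for every $y\in\R$ (so that these two spaces, which form a compatible couple, are the relevant ones).

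First I would estimate $F$ on the two boundary lines. A direct computation of the corresponding Fourier multipliers gives $(-\Delta)^{s/2}F(iy)=(-\Delta)^{-iy(\sigma-s)/2}g=(-\Delta)^{\sigma/2}F(1+iy)$, so that by Corollary~\ref{cor:imaginary power laplacian}, writing $a\eqdef\frac{\pi(\sigma-s)}{4}$,
\[
\|F(iy)\|_{H_{s,p}(\R^n,Y)}+\|F(1+iy)\|_{H_{\sigma,p}(\R^n,Y)}\lesssim\beta_p(Y)\,e^{a|y|}\,\|f\|_{H_{t,p}(\R^n,Y)}.
\]
To feed this into the definition of $\|f\|_{[H_{s,p}(\R^n,Y),H_{\sigma,p}(\R^n,Y)]_\theta}$ I need an admissible competitor in Calderón's space $\mathscr F\big(H_{s,p}(\R^n,Y),H_{\sigma,p}(\R^n,Y)\big)$, and the exponential growth above is absorbed by a Gaussian weight: for $\lambda\in(0,\infty)$ to be chosen, set
\[
G(z)\eqdef e^{\lambda(z^2-z+\theta(1-\theta))}F(z)=e^{\lambda(z-\theta)(z-1+\theta)}F(z).
\]
The factor vanishes at $z=\theta$, so $G(\theta)=f$; on each line $\Re z\in\{0,1\}$ one has $\Re\big(z^2-z+\theta(1-\theta)\big)=\theta(1-\theta)-(\Im z)^2$, so that (up to a universal constant) $\|G(iy)\|_{H_{s,p}(\R^n,Y)}$ and $\|G(1+iy)\|_{H_{\sigma,p}(\R^n,Y)}$ are at most $\beta_p(Y)\,e^{\lambda\theta(1-\theta)}\,e^{a|y|-\lambda y^2}\|f\|_{H_{t,p}(\R^n,Y)}$, which tends to $0$ as $|y|\to\infty$; together with the (routine) analyticity and boundedness of $G$ on $\overline S$, this shows $G\in\mathscr F\big(H_{s,p}(\R^n,Y),H_{\sigma,p}(\R^n,Y)\big)$.

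Consequently $\|f\|_{[H_{s,p}(\R^n,Y),H_{\sigma,p}(\R^n,Y)]_\theta}$ is at most the norm of $G$ in $\mathscr F\big(H_{s,p}(\R^n,Y),H_{\sigma,p}(\R^n,Y)\big)$, which by the last two displays is $\lesssim\beta_p(Y)\,e^{\lambda\theta(1-\theta)+a^2/(4\lambda)}\|f\|_{H_{t,p}(\R^n,Y)}$ since $\sup_{y\in\R}(a|y|-\lambda y^2)=a^2/(4\lambda)$. It then remains to optimize in $\lambda$: by the arithmetic--geometric mean inequality $\min_{\lambda>0}\big(\lambda\theta(1-\theta)+a^2/(4\lambda)\big)=a\sqrt{\theta(1-\theta)}$, attained at $\lambda=a/(2\sqrt{\theta(1-\theta)})$, and $a\sqrt{\theta(1-\theta)}=\frac{\pi(\sigma-s)}{4}\sqrt{\theta(1-\theta)}$, which yields exactly the asserted estimate. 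The one step that requires care is checking that $G$ genuinely belongs to Calderón's space --- in particular that $z\mapsto F(z)$ is an analytic function with values in $H_{s,p}(\R^n,Y)+H_{\sigma,p}(\R^n,Y)$ growing at most exponentially along vertical lines --- and this is precisely where Corollary~\ref{cor:imaginary power laplacian} enters; the rest is the standard three-lines argument, with the choice of the Gaussian weight doing the work of producing the sharp constant.
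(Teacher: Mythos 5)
Your proof is correct and follows essentially the same route as the paper's: the same holomorphic family $(-\Delta)^{\frac{t-s-z(\sigma-s)}{2}}f$ (your $F$ applied to $g=(-\Delta)^{t/2}f$), the same Gaussian weight $e^{\lambda(z^2-z+\theta(1-\theta))}$, the same boundary bound via Corollary~\ref{cor:imaginary power laplacian}, and the same two-stage optimization in $y$ and $\lambda$ yielding $e^{\frac{\pi(\sigma-s)}{4}\sqrt{\theta(1-\theta)}}$. The extra care you take with density and admissibility in Calder\'on's space is a fine addition but does not change the argument.
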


\begin{proof} Consider the strip $S\eqdef \{z\in \C:\ \Re z\in (0,1)\}$. For every $M\in (0,\infty)$ define an auxiliary mapping $\Phi_M:\overline{S}\to H_{s,p}(\R^n,Y)+H_{\sigma,p}(\R^n,Y)$ by
$$
\forall\, z\in \overline S,\qquad \Phi_M(z)\eqdef e^{M(z(z-1)-\theta(\theta-1))}(-\Delta)^{\frac{t-s-z(\sigma-s)}{2}}f.
$$
Then $\Phi_M$ is holomorphic on $S$ and satisfies $\Phi_M(\theta)=f$. By the definition of the complex interpolation space $[H_{s,p}(\R^n,Y),H_{\sigma,p}(\R^n,Y)]_\theta$, we therefore have
\begin{equation}\label{eq:use def complex interpolation}
\left\|f\right\|_{[H_{s,p}(\R^n,Y),H_{\sigma,p}(\R^n,Y)]_\theta}\le \inf_{M>0} \sup_{b\in \R} \max\left\{ \|\Phi_M(ib)\|_{H_{s,p}(\R^n,Y)}, \|\Phi_M(1+ib)\|_{H_{\sigma,p}(\R^n,Y)}\right\}.
\end{equation}

For every $(a,b)\in [0,1]\times \R$ we have
$$
(-\Delta)^{\frac{s+a(\sigma-s)}{2}}\Phi_M(a-bi)=e^{Ma(a-1)+M\theta(1-\theta)-Mb^2-M(2a-1)bi}(-\Delta)^{\frac{ib(\sigma-s)}{2}} (-\Delta)^{\frac{t}{2}}f.
$$
Recalling~\eqref{eq:def bessel}, we therefore obtain the estimate
\begin{align}\label{eq:use imaginary laplacian}
\nonumber\|\Phi_M(a&+bi)\|_{H_{s+a(\sigma-s),p}(\R^n,Y)}\\\nonumber&\le e^{Ma(a-1)+M\theta(1-\theta)-Mb^2}\left\|(-\Delta)^{\frac{ib(\sigma-s)}{2}}\right\|_{L_p(\R^n,Y)\to L_p(\R^n,Y)}\|f\|_{H_{t,p}(\R^n,Y)}\\
&\lesssim \beta_p(Y)e^{Ma(a-1)+M\theta(1-\theta)}\cdot e^{-Mb^2+\frac{\pi |b|(\sigma-s)}{4}}\|f\|_{H_{t,p}(\R^n,Y)},
\end{align}
where in~\eqref{eq:use imaginary laplacian} we used Corollary~\ref{cor:imaginary power laplacian}. The function $b\mapsto -Mb^2+\pi |b|(\sigma-s)/4$ attains its maximum on $\R$ at $b=\pi (\sigma-s)/(8M)$. It therefore follows from~\eqref{eq:use imaginary laplacian} that
\begin{equation*}
\sup_{b\in \R}  \max\left\{\|\Phi_M(ib)\|_{H_{s,p}(\R^n,Y)},\|\Phi_M(1+ib)\|_{H_{\sigma,p}(\R^n,Y)}\right\}\lesssim \beta_p(Y)e^{M\theta(1-\theta)+\frac{\pi^2(\sigma-s)^2}{64M}}\|f\|_{H_{t,p}(\R^n,Y)}.
\end{equation*}
In combination with~\eqref{eq:use def complex interpolation} we therefore have
\begin{align}\label{eq:choose M}
\nonumber \left\|f\right\|_{[H_{s,p}(\R^n,Y),H_{\sigma,p}(\R^n,Y)]_\theta}&\lesssim \beta_p(Y)\left(\inf_{M\in (0,\infty)} e^{M\theta(1-\theta)+\frac{\pi^2(\sigma-s)^2}{64M}}\right)\|f\|_{H_{t,p}(\R^n,Y)}\\
&=\beta_p(Y)e^{\frac{\pi(\sigma-s)}{4}\sqrt{\theta(1-\theta)}}\left\|f\right\|_{H_{t,p}(\R^n,Y)},
\end{align}
where for~\eqref{eq:choose M} the optimal choice of $M\in (0,\infty)$ is $M=\pi(\sigma-s)/(8\sqrt{\theta(1-\theta)})$.
\end{proof}

Suppose that $\Omega\subset \R^n$ is open (for our purposes $\Omega$ will always be either a multiple of $B^n$ or all of $\R^n$). If $(Y,\|\cdot\|_Y)$ is a Banach space and  $p\in [1,\infty]$ then for every smooth   $f:\Omega \to Y$ denote
\begin{equation}\label{eq:Wp1 def}
\|f\|_{W_{1,p}(\Omega,Y)}\eqdef \sum_{j=1}^n \left\|\frac{\partial f}{\partial x_j}\right\|_{L_p(\Omega,Y)}.
\end{equation}
Thus $\|\cdot \|_{W_{p,1}(\Omega,Y)}$ is the (homogeneous) first order Sobolev (semi)norm of $f$.  The corresponding Sobolev space  $W_{p,1}(\Omega,Y)$ is the completion of the space of all smooth and compactly supported functions $f:\R^n\to Y$ under the norm $\|\cdot \|_{W_{p,1}(\Omega,Y)}$. For $(s,p)\in (0,1)\times [1,\infty)$, the order $s$ fractional (homogeneous) Sobolev (semi)norm of $f:\Omega \to Y$ is defined by
\begin{equation}\label{eq:s sobolev}
\|f\|_{W_{s,p}(\Omega,Y)}\eqdef \bigg(\iint_{\Omega\times \Omega}\frac{\|f(x)-f(y)\|_Y^p}{\|x-y\|_2^{n+ps}}\dd x\dd y\bigg)^{\frac{1}{p}}.
\end{equation}
While the notation $\|\cdot \|_{\dot{W}_{s,p}(\Omega,Y)}$ is sometimes used in the literature, we shall use the above simpler notation because nonhomogeneous Sobolev norms do not occur in what follows.

Our next goal is to relate UMD-valued Riesz potentials to  Sobolev norms. The following lemma treats the case of first order Sobolev norms, and also contains a comparison between Riesz potentials that will be needed later; it is a simple consequence of the boundedness of the Hilbert transform on UMD Banach spaces, combined with the method of rotations.

\begin{lemma}\label{lem:Hilbert}
Suppose that $(Y,\|\cdot\|_Y)$ is a UMD Banach space, $n\in \N$ and $p\in (1,\infty)$. Then every $f\in W_{1,p}(\R^n,Y)$ satisfies
$$
\|f\|_{H_{1,p}(\R^n,Y)}\le \beta_p(Y)^2 \|f\|_{W_{1,p}(\R^n,Y)}.
$$
Moreover, if $s\in (1,\infty)$ and $j\in \n$ then for every smooth $f:\R^n\to Y$ we have
\begin{equation}\label{eq:s-1,s}
\left\|\frac{\partial f}{\partial x_j}\right\|_{H_{s-1,p}(\R^n,Y)}\le \beta_p(Y)^2\|f\|_{H_{s,p}(\R^n,Y)}.
\end{equation}
\end{lemma}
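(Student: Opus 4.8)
The plan is to treat both assertions via the method of rotations, reducing multi-dimensional Riesz-type multipliers to one-dimensional directional Hilbert transforms, whose boundedness on $L_p(\R,Y)$ — with norm comparable to $\beta_p(Y)$, or at worst a universal power of it — is Bourgain's theorem. For the first inequality, recall that $(-\Delta)^{1/2}$ is the multiplier with symbol $\|\xi\|_2$, and that for each $j$ the Riesz transform $R_j=\partial_j(-\Delta)^{-1/2}$ has symbol $-i\xi_j/\|\xi\|_2$, so that $(-\Delta)^{1/2}f=-i\sum_{j=1}^n R_j(\partial_j f)$, or more precisely $\|f\|_{H_{1,p}}=\|T_{\xi\mapsto\|\xi\|_2}f\|_{L_p}$ and this equals $\|\sum_j R_j\partial_j f\|_{L_p}$ up to the harmless phase. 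Thus it suffices to bound $\sum_j\|R_j\|_{L_p(\R^n,Y)\to L_p(\R^n,Y)}$ appropriately — but we cannot afford an $n$-dependent loss from summing $n$ Riesz transforms, so instead one writes the single operator $f\mapsto\sum_j R_j\partial_j f$ directly via the method of rotations: $R_j$ has the integral representation $R_j g(x)=c_n\,\mathrm{p.v.}\int_{S^{n-1}}\omega_j\,H_\omega g(x)\,d\sigma(\omega)$, where $H_\omega$ is the Hilbert transform in the direction $\omega$, and $c_n$ is normalized so that $c_n\int_{S^{n-1}}\omega_j^2\,d\sigma(\omega)=1$ (this is the classical subordination formula; cf. the proof of Lemma~\ref{lem Yu lip} for the same kind of polar-coordinate normalization). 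Combining these, $\sum_j R_j\partial_j f(x)=c_n\int_{S^{n-1}}H_\omega\big(\sum_j\omega_j\partial_j f\big)(x)\,d\sigma(\omega) = c_n\int_{S^{n-1}}H_\omega(\partial_\omega f)(x)\,d\sigma(\omega)$, where $\partial_\omega$ is the directional derivative. Now take $L_p(\R^n,Y)$ norms, use Minkowski's integral inequality to pull the norm inside $\int_{S^{n-1}}$, use that $H_\omega$ acts as the one-dimensional Hilbert transform along lines in direction $\omega$ (so $\|H_\omega\|_{L_p(\R^n,Y)\to L_p(\R^n,Y)}=\|H\|_{L_p(\R,Y)\to L_p(\R,Y)}\le\beta_p(Y)$, by Fubini and Bourgain's bound — note the Hilbert transform constant on $L_p(\R,Y)$ is at most $\beta_p(Y)$, or a universal multiple thereof), and finally bound $\|\partial_\omega f\|_{L_p(\R^n,Y)}=\|\sum_j\omega_j\partial_j f\|_{L_p(\R^n,Y)}\le\sum_j|\omega_j|\|\partial_j f\|_{L_p(\R^n,Y)}\le\|f\|_{W_{1,p}(\R^n,Y)}$ since $|\omega_j|\le 1$. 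The surface-measure normalization $c_n\int_{S^{n-1}}1\,d\sigma(\omega)$ is exactly the constant that makes $c_n\int\omega_j^2=1$ absorb correctly; one checks it contributes only a universal factor (this is where a small computation with $\int_{S^{n-1}}\omega_1^2\,d\sigma$ versus $\int_{S^{n-1}}1\,d\sigma$ enters, giving $c_n\int_{S^{n-1}}1\,d\sigma = n$, which must be handled — see below). Assembling, $\|f\|_{H_{1,p}(\R^n,Y)}\lesssim\beta_p(Y)\cdot(\text{normalization})\cdot\|f\|_{W_{1,p}(\R^n,Y)}$, and the stated exponent $\beta_p(Y)^2$ leaves room to absorb the normalization and the phase.

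For the second assertion~\eqref{eq:s-1,s}, observe that $(-\Delta)^{(s-1)/2}\partial_j f$ has multiplier symbol $\|\xi\|_2^{s-1}\cdot i\xi_j = i\,\frac{\xi_j}{\|\xi\|_2}\cdot\|\xi\|_2^{s}$, so that $\partial_j f = -i\,R_j\,(-\Delta)^{(s-1)/2}\cdot(-\Delta)^{1/2}f$ in the sense that $(-\Delta)^{(s-1)/2}\partial_j f = -i\,R_j\,(-\Delta)^{s/2}f$. Hence $\|\partial_j f\|_{H_{s-1,p}(\R^n,Y)}=\|(-\Delta)^{(s-1)/2}\partial_j f\|_{L_p(\R^n,Y)}=\|R_j(-\Delta)^{s/2}f\|_{L_p(\R^n,Y)}\le\|R_j\|_{L_p(\R^n,Y)\to L_p(\R^n,Y)}\cdot\|f\|_{H_{s,p}(\R^n,Y)}$. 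It then remains to bound a single Riesz transform $R_j$ on $L_p(\R^n,Y)$ by $\beta_p(Y)^2$ (or a universal multiple of it), which again follows from the subordination formula $R_j g = c_n\int_{S^{n-1}}\omega_j H_\omega g\,d\sigma(\omega)$ together with Minkowski's inequality, $\|H_\omega\|\le\beta_p(Y)$, and the normalization $c_n\int_{S^{n-1}}|\omega_j|\,d\sigma(\omega)\lesssim 1$ (note $c_n\int_{S^{n-1}}|\omega_j|\,d\sigma\asymp c_n\int_{S^{n-1}}\omega_j^2\,d\sigma\cdot\sqrt n\asymp\sqrt n$ — so one should instead directly use that the single Riesz transform is bounded independently of $n$; the dimension-free bound $\|R_j\|_{L_p(\R^n,Y)\to L_p(\R^n,Y)}\lesssim\beta_p(Y)^2$ for UMD $Y$ is classical and also follows from Theorem~\ref{thm:A representation} applied with $A$ scalar, via the representation of $R_j$ through the heat semigroup as in the Gundy--Varopoulos formula). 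Using $\beta_p(Y)\ge 1$ absorbs the universal constant into the exponent $2$.

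The main obstacle is getting the $n$-dependence right: the naive method-of-rotations bound for a single $R_j$ carries a spurious factor of $\sqrt n$ from $\int_{S^{n-1}}|\omega_j|\,d\sigma$, and summing $n$ of them for the first inequality would cost another factor of $n$ — both unacceptable. The resolution is that for the first inequality one must keep the sum $\sum_j R_j\partial_j f$ together, so that after subordination the angular integral sees $\sum_j\omega_j\partial_j f=\partial_\omega f$ with the favorable bound $\|\partial_\omega f\|_{L_p}\le\|f\|_{W_{1,p}}$ uniformly in $\omega$, and the normalization $c_n\int_{S^{n-1}}1\,d\sigma$ is exactly compensated by the defining relation $c_n\int_{S^{n-1}}\omega_j^2\,d\sigma=1$ summed over $j$ (giving $c_n\int_{S^{n-1}}1\,d\sigma=n$ — so in fact one gets an honest dimension-free bound only by noting that the operator $f\mapsto\sum_j R_j\partial_j f=(-\Delta)^{1/2}f$ has a heat-semigroup representation as in Theorem~\ref{thm:A representation}, bypassing rotations entirely for the sharp constant); for the second inequality, one invokes the dimension-free boundedness of an individual Riesz transform on UMD-valued $L_p$, which is itself most cleanly obtained from Theorem~\ref{thm:A representation} with a scalar $A$. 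I would therefore present the proof using Theorem~\ref{thm:A representation} (scalar case, yielding the $\beta_p(Y)$ or $\beta_p(Y)^2$ constants) rather than the method of rotations, since the excerpt has already set up exactly that machinery and it avoids all dimension-dependent losses; the method of rotations is mentioned only as the conceptual picture.
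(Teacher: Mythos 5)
Your high-level reductions are the right ones and match the paper's: both inequalities come down to bounding Riesz transforms, via $\|f\|_{H_{1,p}}=\|\sum_{j}R_j\partial_jf\|_{L_p}$ and $\|\partial_jf\|_{H_{s-1,p}}=\|R_j(-\Delta)^{s/2}f\|_{L_p}$. But the mechanism you ultimately commit to does not work, and you abandon the mechanism that does. First, Theorem~\ref{thm:A representation} with \emph{scalar} $A$ only produces Fourier multipliers of the form $\mathfrak{m}(\xi)=\|\xi\|_2^2\int_0^\infty e^{-t\|\xi\|_2^2}A(t)\,\dd t$, i.e.\ radial symbols (functions of the Laplacian). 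The Riesz transform's symbol $-i\xi_j/\|\xi\|_2$ is not radial, so it is not covered; the genuine Gundy--Varopoulos representation of $R_j$ requires a matrix-valued (or time-derivative) term that the stated theorem does not include. Likewise $(-\Delta)^{1/2}$ would need $A(t)=1/\sqrt{\pi t}$, which is unbounded, so \eqref{eq:laplace of laplace} does not apply either --- and in any case $(-\Delta)^{1/2}$ alone is not bounded on $L_p$; the bounded object is the composite $\sum_jR_j\partial_j$. So the ``clean'' route you propose to present has a real hole exactly where the Riesz transform bound is needed.

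Second, the reason you retreated from the method of rotations is a miscalculation of the normalization. The subordination formula is $R_j=c_n\int_{S^{n-1}}\omega_j H_\omega\,\dd\sigma(\omega)$ with $c_n=1/\int_{S^{n-1}}|\omega_1|\,\dd\sigma(\omega)$ (not $c_n\int\omega_j^2\,\dd\sigma=1$): comparing symbols, $\int_{S^{n-1}}\omega\,\mathrm{sgn}\langle\xi,\omega\rangle\,\dd\sigma(\omega)=\big(\int_{S^{n-1}}|\omega_1|\,\dd\sigma\big)\,\xi/\|\xi\|_2$. With this normalization Minkowski's inequality gives $\|R_j\|_{L_p(\R^n,Y)\to L_p(\R^n,Y)}\le c_n\int_{S^{n-1}}|\omega_j|\,\dd\sigma(\omega)\cdot\|H\|_{L_p(\R,Y)\to L_p(\R,Y)}=\|H\|_{L_p(\R,Y)\to L_p(\R,Y)}\le\beta_p(Y)^2$, with \emph{no} dimensional loss; this is exactly the Iwaniec--Martin computation the paper performs, where the constant $\Gamma(\tfrac{n+1}{2})/\pi^{(n+1)/2}$ in the Riesz kernel makes the spherical integral equal to $1$ exactly. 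Your other worry --- that summing $n$ Riesz transforms for the first inequality costs a factor $n$ --- is also unfounded: by \eqref{eq:Wp1 def} the norm $\|f\|_{W_{1,p}(\R^n,Y)}$ is itself the sum $\sum_{j=1}^n\|\partial f/\partial x_j\|_{L_p(\R^n,Y)}$, so $\|\sum_jR_j\partial_jf\|_{L_p}\le\max_j\|R_j\|\cdot\sum_j\|\partial_jf\|_{L_p}\le\beta_p(Y)^2\|f\|_{W_{1,p}}$ with no loss. One further caveat: the bound $\|H\|_{L_p(\R,Y)\to L_p(\R,Y)}\le\beta_p(Y)$ that you assert is not known (the known bound, due to Burkholder and Garling, is $\beta_p(Y)^2$, and improving it to linear is open), so there is in fact no ``room to absorb'' constants in the exponent $2$ --- which is precisely why the exact normalization in the method of rotations matters.
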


\begin{proof}
Suppose that $K:\R^n\to Y$ is odd, continuous on $\R^n\setminus \{0\}$, and positively homogeneous of order $-n$, i.e., $K(tx)=t^{-n}K(x)$ for every $t\in (0,\infty)$ and $x\in \R^n\setminus \{0\}$. For $f\in L_p(\R^n,X)$ consider the corresponding  Calder\`on--Zygmund singular integral
\begin{equation}\label{eq:calderon zygmund}
\forall\, x\in \R^n,\qquad T_Kf(x)\eqdef \int_{\R^n} K(x-y) f(y)\dd y.
\end{equation}
It follows from the method of rotations, as presented by Iwaniec and Martin in~\cite{IM:Riesz}, that
\begin{equation}\label{eq:IM}
\|T_K\|_{L_p(\R^n,Y)\to L_p(\R^n,Y)} \le \frac{\pi}{2}\left(\int_{S^{n-1}} |K(z)|\dd\sigma(z)\right)\|H\|_{L_p(\R,Y)\to L_p(\R,Y)},
\end{equation}
where $\sigma$ is the surface area measure on the Euclidean sphere $S^{n-1}$ and $H:L_p(\R,Y)\to L_p(\R,Y)$ is the Hilbert transform, i.e.,
\begin{equation}\label{eq:def hilbert transform}
\forall\, \f\in L_p(\R,Y),\qquad H\f(x)\eqdef \frac{1}{\pi} \int_\R \frac{\f(y)}{x-y}\dd y.
\end{equation}
The integrals in~\eqref{eq:calderon zygmund} and~\eqref{eq:def hilbert transform} exist in the sense of principal values. The estimate~\eqref{eq:IM} is presented in~\cite[Proposition~5.1]{IM:Riesz} in the case $Y=\C$, but the same proof applies to the Banach space-valued setting without any change (the proof is based on an integral identity that is estimated using convexity of the norm. As such, the vector-valued and scalar-valued cases are identical; here the Banach space $Y$ can be general and the UMD property isn't used).

For $j\in \n$ consider the Riesz transform given by
$$
\forall\, x\in \R^n,\qquad R_jf(x)\eqdef \frac{\Gamma(\frac{n+1}{2})}{\pi^{\frac{n+1}{2}}} \int_{\R^n} \frac{x_j-y_j}{\|x-y\|_2^{n+1}}f(y)\dd y.
$$
By~\eqref{eq:IM} we have,
\begin{align}\label{eq:use IM}
\|R_j\|_{L_p(\R^n,Y)\to L_p(\R^n,Y)} &\le \frac{\pi\Gamma(\frac{n+1}{2})\int_{S^{n-1}} |z_1|\dd \sigma(z) }{2\pi^{\frac{n+1}{2}}} \|H\|_{L_p(\R,Y)\to L_p(\R,Y)}\nonumber\\ &=\|H\|_{L_p(\R,Y)\to L_p(\R,Y)} \le \beta_p(Y)^2,
\end{align}
where the bound $\|H\|_{L_p(\R,Y)\to L_p(\R,Y)}\le \beta_p(Y)^2$ that was used in~\eqref{eq:use IM} is implicit in the important work of Burkholder~\cite{Burkholder:83} and explicit in~\cite[Theorem~3]{Garling:86}.

Note that for every $j\in \n$ we have $R_j=(-\Delta)^{-1/2}\frac{\partial}{\partial x_j}$, as follows directly by computing the Fourier transform (see e.g.~\cite[Chapter~III]{Ste70-singular})). Consequently,
\begin{equation*}
\|f\|_{H_{1,p}(\R^n,Y)}\stackrel{\eqref{eq:def bessel}}{=}\left\|(-\Delta)^{-\frac12}\Delta f\right\|_{L_p(\R^n,Y)}=\Big\|\sum_{j=1}^n R_j\frac{\partial f}{\partial x_j}\Big\|_{L_p(\R^n,Y)}\stackrel{\eqref{eq:Wp1 def}\wedge \eqref{eq:use IM}}{\le} \beta_p(Y)^2\|f\|_{W_{1,p}(\R^n,Y)}.
\end{equation*}
Finally, to deduce~\eqref{eq:s-1,s}, proceed as follows.
\begin{multline*}
\left\|\frac{\partial f}{\partial x_j}\right\|_{H_{s-1,p}(\R^n,Y)}\stackrel{\eqref{eq:def bessel}}{=} \left\|(-\Delta)^{\frac{s-1}{2}}\frac{\partial f}{\partial x_j}\right\|_{L_p(\R^n,Y)}
= \left\|(-\Delta)^{-\frac12}\frac{\partial}{\partial x_j} (-\Delta)^{\frac{s}{2}}f\right\|_{L_p(\R^n,Y)}\\
=\left\|R_j(-\Delta)^{\frac{s}{2}}f\right\|_{L_p(\R^n,Y)}\stackrel{\eqref{eq:use IM}}{\le} \beta_p(Y)^2 \left\|(-\Delta)^{\frac{s}{2}}f\right\|_{L_p(\R^n,Y)}\stackrel{\eqref{eq:def bessel}}{=}
\beta_p(Y)^2\|f\|_{H_{s,p}(\R^n,Y)}. \tag*{\qedhere}
\end{multline*}
\end{proof}

The following theorem asserts a useful comparison between UMD-valued fractional Sobolev norms and the corresponding Riesz potentials.
\begin{theorem}\label{compare fractional}
Fix $n\in \N$, $s\in (0,1)$ and $p\in [2,\infty)$. Suppose that $(Y,\|\cdot\|_Y)$ is a UMD Banach space of cotype $p$. Then every $f\in H_{s,p}(\R^n,Y)$ satisfies
\begin{equation}\label{eq:fractional sobolev comparison}
\|f\|_{W_{s,p}(\R^n,Y)}\lesssim \frac{C_p(Y)\beta_p(Y)^{4}(nV_n)^{\frac{1}{p}} }{s(1-s)} \|f\|_{H_{s,p}(\R^n,Y)},
\end{equation}
\end{theorem}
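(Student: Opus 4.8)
The plan is to realize \eqref{eq:fractional sobolev comparison} as a consequence of a single one‑dimensional Littlewood--Paley estimate, reached by writing the Gagliardo seminorm in polar coordinates and slicing, and then to return to the full Laplacian by means of the ``directional'' multiplier of Corollary~\ref{cor:a power mult}. It suffices to treat $f$ in the dense class of smooth, compactly supported functions with finite‑dimensional range (the general case follows by density). Writing the defining double integral \eqref{eq:s sobolev} in polar coordinates after the substitution $y=x+z$,
\begin{equation*}
\|f\|_{W_{s,p}(\R^n,Y)}^p=\int_{S^{n-1}}\int_0^\infty \frac{\|f(\cdot+r\omega)-f\|_{L_p(\R^n,Y)}^p}{r^{1+ps}}\,\dd r\,\dd\sigma(\omega),
\end{equation*}
and, for a fixed $\omega\in S^{n-1}$, applying Fubini along the splitting $\R^n=\R\omega\oplus\omega^\perp$, the inner double integral becomes $\int_{\omega^\perp}\big(\int_0^\infty r^{-1-ps}\|\Delta_r f_{\omega,y}\|_{L_p(\R,Y)}^p\,\dd r\big)\,\dd y$, where $f_{\omega,y}(t)\eqdef f(t\omega+y)$ is a smooth compactly supported function on $\R$ and $\Delta_r g\eqdef g(\cdot+r)-g$.

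The core is the one‑dimensional estimate
\begin{equation*}
\int_0^\infty \frac{\|\Delta_r g\|_{L_p(\R,Y)}^p}{r^{1+ps}}\,\dd r\lesssim \frac{\big(C_p(Y)\beta_p(Y)\big)^p}{s^p(1-s)^p}\,\big\|\big(-\tfrac{\dd^2}{\dd t^2}\big)^{s/2}g\big\|_{L_p(\R,Y)}^p ,
\end{equation*}
for smooth compactly supported $g:\R\to Y$. Decompose $g=\sum_{k\in\Z}T_{\psi_k}g$ with the bumps of \eqref{eq:def psik}. By the triangle inequality and Lemma~\ref{lem:localization}, $\|\Delta_r g\|_{L_p(\R,Y)}\lesssim\sum_k\min\{1,r2^{-k}\}\|T_{\psi_k}g\|_{L_p(\R,Y)}$; the substitution $r=2^\ell$ turns the left‑hand integral into a convolution on $\Z$ against the kernel $j\mapsto\min\{1,2^j\}2^{-sj}$, whose $\ell_1(\Z)$‑norm is $\asymp\tfrac1{1-s}+\tfrac1s=\tfrac1{s(1-s)}$, so Young's inequality on $\ell_p(\Z)$ gives $\int_0^\infty r^{-1-ps}\|\Delta_r g\|_{L_p}^p\,\dd r\lesssim s^{-p}(1-s)^{-p}\sum_k 2^{-ksp}\|T_{\psi_k}g\|_{L_p(\R,Y)}^p$. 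Lemma~\ref{lem:localization2} with $\alpha=s/2$ then gives $2^{-ks}\|T_{\psi_k}g\|_{L_p(\R,Y)}\lesssim\|T_{\psi_k}h\|_{L_p(\R,Y)}$, uniformly in $s\in(0,1)$, where $h\eqdef\big(-\tfrac{\dd^2}{\dd t^2}\big)^{s/2}g$; Lemma~\ref{lem:psi vs theta} replaces each $\psi_k$ by the pseudo‑bump $\t_k$ at the cost of a universal constant; and finally, since $L_p(\R,Y)$ has cotype $p$ with $C_p(L_p(\R,Y))\le C_p(Y)$ (by Minkowski's integral inequality, using $p\ge2$), applying the definition of cotype to the family $\{T_{\t_k}h\}_k$, followed by Jensen's inequality and Proposition~\ref{prop:littlewood paley}, yields $\sum_k\|T_{\t_k}h\|_{L_p(\R,Y)}^p\le\big(C_p(Y)\beta_p(Y)\big)^p\|h\|_{L_p(\R,Y)}^p$. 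Chaining these inequalities proves the one‑dimensional estimate.

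It remains to reassemble. Fubini back gives $\int_{\omega^\perp}\|\big(-\tfrac{\dd^2}{\dd t^2}\big)^{s/2}f_{\omega,y}\|_{L_p(\R,Y)}^p\,\dd y=\|D_\omega^s f\|_{L_p(\R^n,Y)}^p$, with $D_\omega^s\eqdef T_{\xi\mapsto|\langle\xi,\omega\rangle|^s}$. Factoring $D_\omega^s=T_{m_\omega}(-\Delta)^{s/2}$, where $m_\omega(\xi)=|\langle\xi,\omega\rangle|^s/\|\xi\|_2^s$ is a rotation of the symbol $\mathfrak m_s$ of \eqref{eq:def ma}, and using that rotations act isometrically on $L_p(\R^n,Y)$ together with Corollary~\ref{cor:a power mult}, we obtain $\|D_\omega^s f\|_{L_p(\R^n,Y)}\le\beta_p(Y)^3\|f\|_{H_{s,p}(\R^n,Y)}$, uniformly in $\omega\in S^{n-1}$. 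Substituting the one‑dimensional estimate and this bound into the polar‑coordinate identity and integrating the resulting constant over $S^{n-1}$ (which contributes $\sigma(S^{n-1})=nV_n$) gives \eqref{eq:fractional sobolev comparison}, the factor $\beta_p(Y)^4$ absorbing $\beta_p(Y)^3$ from Corollary~\ref{cor:a power mult} and $\beta_p(Y)$ from Proposition~\ref{prop:littlewood paley}.

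The step that requires real care is the middle one: recognizing that \eqref{eq:fractional sobolev comparison} is, in essence, the embedding of the Triebel--Lizorkin‑type quantity $\|(-\Delta)^{s/2}f\|_{L_p}$ into the Besov‑type Gagliardo seminorm — available only for $p\ge2$, and precisely where the cotype $p$ of $Y$ must enter through the randomized square function — and then arranging the dyadic decomposition so that every intermediate constant is uniform as $s\to0^+$ and $s\to1^-$ and independent of $n$, the sole dimensional factor $nV_n$ emerging only at the final spherical integration; the $1/(s(1-s))$ in \eqref{eq:fractional sobolev comparison} is exactly the $\ell_1(\Z)$‑norm of the convolution kernel produced by Lemma~\ref{lem:localization}.
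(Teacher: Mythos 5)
Your proposal is correct and follows essentially the same route as the paper's proof: the polar-coordinate slicing onto lines, the one-dimensional estimate $\|g\|_{W_{s,p}(\R,Y)}\lesssim\frac{C_p(Y)\beta_p(Y)}{s(1-s)}\|g\|_{H_{s,p}(\R,Y)}$ proved via Lemma~\ref{lem:localization}, the dyadic convolution kernel with $\ell_1(\Z)$-norm $\asymp\frac{1}{s(1-s)}$, Lemmas~\ref{lem:localization2} and~\ref{lem:psi vs theta}, cotype and Proposition~\ref{prop:littlewood paley}, and the return to $(-\Delta)^{s/2}$ through the rotated multiplier of Corollary~\ref{cor:a power mult} with the spherical integration supplying $(nV_n)^{1/p}$. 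All constants are accounted for exactly as in the paper.
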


\begin{proof}[Proof of Theorem~\ref{compare fractional}] 
The proof below proceeds via a reduction to a slightly stronger statement in the one dimensional case $n=1$. So, assume for the moment that we already proved that
\begin{equation}\label{eq:fractional sobolev comparison n=1}
\forall\, g\in H_{s,p}(\R,Y),\qquad \|g\|_{W_{s,p}(\R,Y)}\lesssim \frac{C_p(Y)\beta_p(Y)}{s(1-s)} \|g\|_{H_{s,p}(\R,Y)}.
\end{equation}
We shall now deduce the desired estimate~\eqref{eq:fractional sobolev comparison} from~\eqref{eq:fractional sobolev comparison n=1}, and then proceed to prove~\eqref{eq:fractional sobolev comparison n=1}.

For every $z\in S^{n-1}$ and $w\in z^\perp\subset \R^n$ define $g_{z,w}:\R\to Y$ by setting
\begin{equation}\label{eq:def gzw}
\forall\, t\in \R,\qquad g_{z,w}(t)\eqdef f(w+tz).
\end{equation}
By changing to polar coordinates we see that
\begin{eqnarray*}
\|f\|_{W_{s,p}(\R^n,Y)}^{p} &\stackrel{\eqref{eq:s sobolev}}{=}& \iint_{\R^n\times \R^n} \frac{\|f(x+y)-f(x)\|_Y^{p}}{\|y\|_2^{n+ps}}\dd x \dd y
\\&=&\frac12 \iiint_{S^{n-1}\times \R^n\times \R}\frac{\|f(x+rz)-f(x)\|_Y^{p}}{|r|^{1+ps}} \dd\sigma(z)\dd x \dd r\\&\stackrel{\eqref{eq:s sobolev}\wedge\eqref{eq:def gzw}}{=}&\frac12 \int_{S^{n-1}}\dd\sigma(z)\int_{z^{\perp}}\|g_{z,w}\|_{W_{s,p}(\R,Y)}^{p}\dd w.
\end{eqnarray*}
Hence, using~\eqref{eq:fractional sobolev comparison n=1} we deduce that
\begin{equation}\label{eq:after polar}
\|f\|_{W_{s,p}(\R^n,Y)}\lesssim \frac{C_p(Y)\beta_p(Y)}{s(1-s)} \left(\int_{S^{n-1}}\dd\sigma(z)\int_{z^{\perp}}\|g_{z,w}\|_{H_{s,p}(\R,Y)}^{p}\dd w\right)^{\frac{1}{p}}.
\end{equation}
For every $z\in S^{n-1}$, $w\in z^\perp$ and $t\in \R$ denote
$$
h_{z,w}(t)\eqdef \left(-\langle z,\nabla\rangle^2\right)^{\frac{s}{2}}f (w+tz)\stackrel{\eqref{eq:def gzw}}{=}\left(-\frac{\partial^2}{\partial t^2}\right)^{\frac{s}{2}}g_{z,w}(t).
$$
With this notation, for every $z\in S^{n-1}$ we have
\begin{multline}\label{eq:z dot}
\int_{z^{\perp}}\|g_{z,w}\|_{H_{s,p}(\R,Y)}^{p}\dd w\stackrel{\eqref{eq:def bessel}}{=}
\int_{z^{\perp}}
\left\|h_{z,w}\right\|_{L_{p}(\R,Y)}^{p}\dd w=\left\|\left(-\langle z,\nabla\rangle^2\right)^{\frac{s}{2}}f\right\|_{L_{p}(\R^n,Y)}^{p}\\
=\left\|\left(-\langle z,\nabla\rangle^2\right)^{\frac{s}{2}}(-\Delta)^{-\frac{s}{2}}(-\Delta)^{\frac{s}{2}}f\right\|_{L_{p}(\R^n,Y)}^{p}\stackrel{\eqref{eq:def bessel}}{\le} \left\|T_{m_z}\right\|_{L_{p}(\R^n,Y)\to L_{p}(\R^n,Y)}^{p}\left\|f\right\|_{H_{s,p}(\R^n,Y)}^{p},
\end{multline}
where we set $m_z(x)=|\langle z,x\rangle|^s/\|x\|_2^s$ for $x\in \R^n$ and $z\in S^{n-1}$. By rotation invariance, it follows from Corollary~\ref{cor:a power mult} that the norm of the multiplier $T_{m_z}$ as an operator from $L_{p}(\R^n,Y)$ to itself is at most a constant multiple of $\beta_p(Y)^3$. By combining this bound with~\eqref{eq:after polar} and~\eqref{eq:z dot}, we get
$$
\|f\|_{W_{s,p}(\R^n,Y)}\lesssim \frac{C_p(Y)\beta_p(Y)^4\sigma(S^{n-1})^{\frac{1}{p}}}{s(1-s)} \left\|f\right\|_{H_{s,p}(\R^n,Y)}.
$$
Since $\sigma(S^{n-1})=nV_n$, this concludes the deduction of~\eqref{eq:fractional sobolev comparison} from~\eqref{eq:fractional sobolev comparison n=1}.

It remains to prove~\eqref{eq:fractional sobolev comparison n=1}. The case $Y=\R$ of~\eqref{eq:fractional sobolev comparison n=1} is given in~\cite[Theorem~6.2.5]{BeLo}, without explicit dependence on the relevant parameters. The proof of~\eqref{eq:fractional sobolev comparison n=1} below consists of an adaptation of  the argument of~\cite[Theorem~6.2.5]{BeLo} to the UMD-valued setting, while tracking the bounds.

Recalling the Littlewood--Paley partition of unity $\{\psi_j\}_{j\in \Z}$ given in~\eqref{eq:def psik}, for every $y\in \R$ we have
\begin{align}
 \left(\int_\R\left\|g(x+y)-g(x)\right\|_Y^{p}\dd x\right)^{\frac{1}{p}}&\le \sum_{j\in \Z}\left(\int_\R\left\|T_{\psi_j}g(x+y)-T_{\psi_j(x)}g(x)\right\|_Y^{p}\dd x\right)^{\frac{1}{p}}\label{eq:use partition of unity}
\\&\lesssim \sum_{j\in \Z}  \min\Big\{1,\frac{|y|}{2^j}\Big\}\left\|T_{\psi_j}g\right\|_{L_{p}(\R,Y)}\label{eq:use localization},
\end{align}
where in~\eqref{eq:use partition of unity} we used the fact that $\sum_{j\in \Z}\psi_j\equiv 1$ and the triangle inequality in $L_{p}(\R,Y)$, and in~\eqref{eq:use localization} we used Lemma~\ref{lem:localization}. Now,
\begin{eqnarray}\label{eq:1 dim comparison used localization}
\|g\|_{W_{s,p}(\R,Y)}&\stackrel{\eqref{eq:use localization}}{\lesssim}&
\Big(\sum_{r\in \Z}\int_{2^r}^{2^{r+1}} \Big(\sum_{j\in \Z}  \min\Big\{1,\frac{|y|}{2^j}\Big\}\left\|T_{\psi_j}g\right\|_{L_{p}(\R,Y)}\Big)^{p}\frac{\dd y}{|y|^{1+ps}}\Big)^{\frac{1}{p}} \label{eq:integrand 2^r}\\
&\asymp& \Big(\sum_{r\in \Z}\Big(\sum_{u\in \Z}  \frac{\min\left\{1,2^{u}\right\}}{2^{su}}\cdot 2^{-s(r-u)}\left\|T_{\psi_{r-u}}g\right\|_{L_{p}(\R,Y)}\Big)^{p}\Big)^{\frac{1}{p}} \label{eq:j=r-u}\\
&\le & \sum_{u\in \Z} \Big(\sum_{r\in \Z}\frac{\min\left\{1,2^{pu}\right\}}{2^{psu}}\cdot 2^{-ps (r-u)}\left\|T_{\psi_{r-u}}g\right\|_{L_{p}(\R,Y)}^{p}\Big)^{\frac{1}{p}}\label{eq:triangle ell kappa beta}\\ \nonumber
&=& \Big(\sum_{j\in \Z} 2^{-ps j}\left\| T_{\psi_j}g\right\|_{L_{p}(\R,Y)}^{p}\Big)^{\frac{1}{p}}\sum_{u\in \Z}\frac{\min\left\{1,2^{ u}\right\}}{2^{su}}\\
&\asymp &\frac{1}{s(1-s)}\Big(\sum_{j\in \Z} 2^{-ps j}\left\| T_{\psi_j}g\right\|_{L_{p}(\R,Y)}^{p}\Big)^{\frac{1}{p}} \label{eq:geometric sum},
\end{eqnarray}
where for~\eqref{eq:j=r-u} use the fact that for each $r\in \Z$ in the integrand of the corresponding summand that appears in the right hand side of~\eqref{eq:integrand 2^r} we have $|y|\asymp 2^r$, and make the change of variable $u=r-j$, and for~\eqref{eq:triangle ell kappa beta} use the triangle inequality in $\ell_{p}(\Z)$.



Recalling  the functions $\{\t_j\}_{j\in \Z}$ given in~\eqref{eq:def thetha k}, an application of Proposition~\ref{prop:littlewood paley} shows that
\begin{equation}\label{eq:kappa beta cotype upper}
\E_{\e\in \{-1,1\}^{\Z}}\bigg[\Big\|\sum_{j\in \Z} \e_j T_{\t_j}(-\Delta)^{\frac{s}{2}}g\Big\|_{L_{p}(\R,Y)}\bigg]\le \beta_p(Y) \left\|(-\Delta)^{\frac{s}{2}}g\right\|_{L_{p}(\R,Y)}=\beta_p(Y)\|g\|_{H_{s,p}(\R,Y)}.
\end{equation}
At the same time, since $L_p(\R,Y)$ has cotype $p$ with constant $C_p(Y)$,
\begin{multline}\label{eq:kappa beta cotype lower}
\E_{\e\in \{-1,1\}^{\Z}}\bigg[\Big\|\sum_{j\in \Z} \e_j T_{\t_j}(-\Delta)^{\frac{s}{2}}g\Big\|_{L_{p}(\R,Y)}\bigg]\gtrsim \frac{1}{C_p(Y)}\Big(\sum_{j\in \Z} \left\| T_{\t_j}(-\Delta)^{\frac{s}{2}}g\right\|_{L_{p}(\R,Y)}^{p}\Big)^{\frac{1}{p}}\\
\gtrsim \frac{1}{C_p(Y)}\Big(\sum_{j\in \Z} \left\| T_{\psi_j}(-\Delta)^{\frac{s}{2}}g\right\|_{L_{p}(\R,Y)}^{p}\Big)^{\frac{1}{p}}
\gtrsim \frac{1}{C_p(Y)}\Big(\sum_{j\in \Z} 2^{-ps j}\left\| T_{\psi_j}g\right\|_{L_{p}(\R,Y)}^{p}\Big)^{\frac{1}{p}},
\end{multline}
where in the penultimate step of~\eqref{eq:kappa beta cotype lower} we used Lemma~\ref{psi theta comparison} and in the final step of~\eqref{eq:kappa beta cotype lower} we used Lemma~\ref{lem:localization2}. By combining~\eqref{eq:kappa beta cotype upper} and~\eqref{eq:kappa beta cotype lower}, we see that
\begin{equation}\label{eq:beta 18}
\Big(\sum_{j\in \Z} 2^{-ps j}\left\| T_{\psi_j}g\right\|_{L_{p}(\R,Y)}^{p}\Big)^{\frac{1}{p}}\lesssim C_p(Y)\beta_p(Y) \|g\|_{H_{s,p}(\R,Y)}.
\end{equation}
A substitution of~\eqref{eq:beta 18} into~\eqref{eq:geometric sum} now yields the desired estimate~\eqref{eq:fractional sobolev comparison n=1}.
\end{proof}

\begin{remark}{\em
The left side of \eqref{eq:beta 18} is the Besov norm $\|g\|_{B^s_{p,p}(\R,Y)}$. Hence~\eqref{eq:beta 18} asserts a quantitative embedding of $H_{s,p}(\R,Y)$ into $B^s_{p,p}(\R,Y)$ when $Y$ is a UMD space of cotype $p$. A qualitative embedding statement of this type was recently established by Veraar~\cite[Proposition 3.1]{Veraar:13}.
}\end{remark}

\begin{remark}\label{remark type} {\em One can prove a reverse inequality to that of Theorem~\ref{compare fractional} under the assumption that $(Y,\|\cdot\|_Y)$ is a UMD Banach space of type $p\in (1,2]$, in which case one obtains an upper bound on $\|f\|_{H_{s,p}(\R^n,Y)}$ in terms of $\|f\|_{W_{s,p}(\R^n,Y)}$. Specifically, we have the following estimate for  $s>0$.
\begin{equation}\label{eq:type case}
\forall f\in W_{s,p}(\R^n,Y),\qquad \|f\|_{H_{s,p}(\R^n,Y)}\lesssim \frac{T_p(Y)\beta_p(Y)^{4}n^{\frac{s}{2}}}{(p-1)(nV_n)^{1-\frac{1}{p}}} \|f\|_{W_{s,p}(\R^n,Y)}.
\end{equation}
Since~\eqref{eq:type case} is not needed below, we omit its proof (which is available on request). By Remark~\ref{rem:np duality type}, one can take in~\eqref{eq:type case} $p-1\asymp 1/\beta(Y)$, in which case $T_p(Y)\asymp 1$ and, by~\eqref{eq:betap beta 2}, $\beta_p(Y)\lesssim \beta(Y)^2$. Note that, by Kwapien's theorem~\cite{Kwa72}, unless $Y$ is isomorphic to a Hilbert space, its type and cotype do not coincide, and therefore by combining Theorem~\ref{compare fractional} with~\eqref{eq:type case} one does not obtain an equivalence between the norms $\|\cdot \|_{H_{s,p}(\R^n,Y)}$ and $\|\cdot \|_{W_{s,p}(\R^n,Y)}$ for non-Hilbertian targets $Y$. See~\cite{HM96} for a related characterization of Hilbert space.
}
\end{remark}

Recalling the notation that was introduced in Section~\ref{sec:prelim}, we end this section by deducing a corollary of Theorem~\ref{compare fractional} that will be very important in what follows.

\begin{corollary}\label{cor:combine bessel potential with legendre} Fix $p\in [2,\infty)$ and $(s,\sigma)\in (0,1)\times (1,2)$. Let $(Y,\|\cdot\|_Y)$ be a UMD Banach space of cotype $p$. Then every measurable $f:\R^n\to Y$ satisfies
\begin{equation}\label{eq:coro s}
 \bigg(\iiint_{ \R^n\times B^n\times (0,\infty)}\frac{\|f^x(uy)-P^1_uf^x(uy)\|_Y^p}{V_nu^{ps+1}}\dd x\dd y\dd u\bigg)^{\frac{1}{p}} \lesssim \frac{\sqrt{pn} C_p(Y)\beta_p(Y)^{4} }{s(1-s)}\|f\|_{H_{s,p}(\R^n,Y)},
\end{equation}
and
\begin{equation}\label{eq:coro sigma}
 \bigg(\iiint_{ \R^n\times B^n\times (0,\infty)}\frac{\|f^x(uy)-P^1_uf^x(uy)\|_Y^p}{V_nu^{p\sigma+1}}\dd x\dd y\dd u\bigg)^{\frac{1}{p}} \lesssim \frac{\sqrt{p}n^{\frac32}C_p(Y)\beta_p(Y)^{6}}{(\sigma-1)(2-\sigma)}\|f\|_{H_{\sigma,p}(\R^n,Y)}.
\end{equation}
\end{corollary}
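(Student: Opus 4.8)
The plan is to obtain both inequalities by combining the Dorronsoro-type defect estimates of Corollary~\ref{lem:fractional sobolev appears} and Corollary~\ref{lem:fractional sobolev appears2} --- which control the left-hand sides of \eqref{eq:coro s} and \eqref{eq:coro sigma} by fractional Sobolev seminorms --- with the comparison of fractional Sobolev seminorms to Riesz potential norms supplied by Theorem~\ref{compare fractional} (together with \eqref{eq:s-1,s} of Lemma~\ref{lem:Hilbert} for the second estimate). No new idea is required beyond a correct choice of the auxiliary exponent and careful bookkeeping of the scalar factors.

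For \eqref{eq:coro s} I would apply Corollary~\ref{lem:fractional sobolev appears} with $q=ps$, so that $u^{q+1}=u^{ps+1}$, then raise the resulting pointwise-in-$x$ bound to the power $p$, integrate over $x\in\R^n$, and divide by $V_n$. Since $s\in(0,1)$, the right-hand side becomes $\frac{\min\{\sqrt{pn},n\}^p}{V_n(n+ps)}\int_{\R^n}\int_{\R^n}\|f(x+y)-f(x)\|_Y^p\|y\|_2^{-n-ps}\,\dd y\,\dd x$, which by \eqref{eq:s sobolev} equals $\frac{\min\{\sqrt{pn},n\}^p}{V_n(n+ps)}\|f\|_{W_{s,p}(\R^n,Y)}^p$. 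Taking $p$-th roots and inserting the bound $\|f\|_{W_{s,p}(\R^n,Y)}\lesssim\frac{C_p(Y)\beta_p(Y)^4(nV_n)^{1/p}}{s(1-s)}\|f\|_{H_{s,p}(\R^n,Y)}$ of Theorem~\ref{compare fractional} (inequality \eqref{eq:fractional sobolev comparison}), the $V_n$ factors cancel, while $n^{1/p}\le(n+ps)^{1/p}$ and $\min\{\sqrt{pn},n\}\le\sqrt{pn}$ leave exactly the bound asserted in \eqref{eq:coro s}. Because Corollary~\ref{lem:fractional sobolev appears} holds for every measurable $f$ and \eqref{eq:coro s} is vacuous when $\|f\|_{H_{s,p}(\R^n,Y)}=\infty$, no approximation step is needed here.

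The proof of \eqref{eq:coro sigma} follows the same template, but now I would use Corollary~\ref{lem:fractional sobolev appears2} with $q=p\sigma$ --- admissible precisely because $\sigma>1$ forces the required condition $q>p$, and then $q-p=p(\sigma-1)\in(0,p)$. Its right-hand side features $\sum_{j=1}^n\big(\int_{\R^n}\|\tfrac{\partial f}{\partial x_j}(x+y)-\tfrac{\partial f}{\partial x_j}(x)\|_Y^p\|y\|_2^{-n-p(\sigma-1)}\,\dd y\big)^{1/p}$; after raising to the power $p$ (which costs a factor $n^{p-1}$ through $(\sum_{j\le n}a_j)^p\le n^{p-1}\sum_{j\le n}a_j^p$), integrating in $x$, and dividing by $V_n$, each summand becomes $\|\tfrac{\partial f}{\partial x_j}\|_{W_{\sigma-1,p}(\R^n,Y)}^p$, now a genuine Gagliardo seminorm since $\sigma-1\in(0,1)$. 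To each I would apply Theorem~\ref{compare fractional} with $s=\sigma-1$ (noting $(\sigma-1)(1-(\sigma-1))=(\sigma-1)(2-\sigma)$), followed by \eqref{eq:s-1,s} of Lemma~\ref{lem:Hilbert} with $s=\sigma>1$, which replaces $\|\tfrac{\partial f}{\partial x_j}\|_{H_{\sigma-1,p}(\R^n,Y)}$ by $\beta_p(Y)^2\|f\|_{H_{\sigma,p}(\R^n,Y)}$ and so turns the factor $\beta_p(Y)^4$ into $\beta_p(Y)^6$. Summing over the $n$ coordinates and taking $p$-th roots, the numerator accumulates $n^{1/2}$ (from $\min\{\sqrt{pn},n\}\le\sqrt{pn}$), $n^{(p-1)/p}$, $n^{1/p}$ (from $(\sum_{j\le n}(\cdot)^p)^{1/p}$) and $n^{1/p}$ (from $V_n^{-1/p}(nV_n)^{1/p}=n^{1/p}$), against $(n+p(\sigma-1))^{1/p}\ge n^{1/p}$ in the denominator; the net power of $n$ is $n^{3/2}$, and together with the $\sqrt{p}$ coming from $\sqrt{pn}$, the factor $1/\sigma\le1$ present in the constant of Corollary~\ref{lem:fractional sobolev appears2}, and the denominator $(\sigma-1)(2-\sigma)$ from Theorem~\ref{compare fractional}, this is exactly \eqref{eq:coro sigma}. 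Since Corollary~\ref{lem:fractional sobolev appears2} is stated for smooth $f$, I would first prove \eqref{eq:coro sigma} for smooth compactly supported $f$ and then extend to general $f\in H_{\sigma,p}(\R^n,Y)$ by density (the left-hand side is a seminorm in $f$ dominated by the right-hand side on a dense subspace), the estimate being vacuous otherwise.

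I do not expect a genuine obstacle: the statement is a corollary and its proof is an assembly of results already established. The only points requiring attention are (i) matching the power of $u$ by taking $q=ps$, resp.\ $q=p\sigma$, and verifying the constraint $q>p$ in the second case --- the sole place where the hypothesis $\sigma>1$ is used; (ii) the somewhat tedious tracking of the many scalar factors $n$, $p$, $V_n$, $C_p(Y)$, $\beta_p(Y)$ and the denominators $s(1-s)$, $(\sigma-1)(2-\sigma)$; and (iii) the routine density reduction for \eqref{eq:coro sigma}.
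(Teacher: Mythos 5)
Your proposal is correct and follows essentially the same route as the paper: integrate Corollary~\ref{lem:fractional sobolev appears} (with $q=ps$) resp.\ Corollary~\ref{lem:fractional sobolev appears2} (with $q=p\sigma$) over $x$, then invoke Theorem~\ref{compare fractional} and, for \eqref{eq:coro sigma}, the bound \eqref{eq:s-1,s}. The only cosmetic difference is that you handle the sum over coordinates via $(\sum_j a_j)^p\le n^{p-1}\sum_j a_j^p$ rather than the triangle inequality in $L_p(\dd x)$, which yields the same factor of $n$ in the end.
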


\begin{proof} By integrating the conclusion of Corollary~\ref{lem:fractional sobolev appears} (with $q=ps$) over $x\in \R^n$ we see that
\begin{equation}\label{eq:integrate corollary1}
 \bigg(\frac{1}{V_n}\iiint_{ \R^n\times B^n\times (0,\infty)}\frac{\|f^x(uy)-P^1_uf^x(uy)\|_Y^p}{u^{ps+1}}\dd x\dd y\dd u\bigg)^{\frac{1}{p}}\lesssim  \frac{\sqrt{pn}}{(nV_n)^{\frac{1}{p}}}\|f\|_{W_{s,p}(\R^n,Y)}.
\end{equation}
A substitution of the conclusion of Theorem~\ref{compare fractional} into~\eqref{eq:integrate corollary1} yields~\eqref{eq:coro s}.

Next, by integrating the conclusion of Corollary~\ref{lem:fractional sobolev appears2} (with $q=p\sigma>p$) over $x\in \R^n$ we see that
\begin{equation}\label{eq:integrate corollary2}
 \bigg(\frac{1}{V_n}\iiint_{ \R^n\times B^n\times (0,\infty)}\frac{\|f^x(uy)-P^1_uf^x(uy)\|_Y^p}{u^{p\sigma+1}}\dd x\dd y\dd u\bigg)^{\frac{1}{p}}\lesssim \frac{\sqrt{pn}}{(nV_n)^{\frac{1}{p}}}\sum_{j=1}^n\left\|\frac{\partial f}{\partial x_j}\right\|_{W_{\sigma-1,p}(\R^n,Y)}.
\end{equation}
By Theorem~\ref{compare fractional} applied to  $\frac{\partial f}{\partial x_j}$ for each $j\in \n$ and with $s$ replaced by $\sigma-1\in (0,1)$,
\begin{equation}\label{eq:use main compare sigma-1}
\forall\, j\in \n,\qquad \left\|\frac{\partial f}{\partial x_j}\right\|_{W_{\sigma-1,p}(\R^n,Y)}\lesssim \frac{C_p(Y)\beta_p(Y)^{4}(nV_n)^{\frac{1}{p}} }{(\sigma-1)(2-\sigma)}\left\|\frac{\partial f}{\partial x_j}\right\|_{H_{\sigma-1,p}(\R^n,Y)}.
\end{equation}
Moreover, by the second assertion of Lemma~\ref{lem:Hilbert} we have
\begin{equation}\label{eq:use derivative compare sigma}
\max_{j\in \n}\left\|\frac{\partial f}{\partial x_j}\right\|_{H_{\sigma-1,p}(\R^n,Y)} \lesssim \beta_p^2(Y) \left\|f\right\|_{H_{\sigma,p}(\R^n,Y)}.
\end{equation}
By substituting~\eqref{eq:use derivative compare sigma} into~\eqref{eq:use main compare sigma-1}, and then substituting the resulting estimate into~\eqref{eq:integrate corollary2}, we obtain the desired estimate~\eqref{eq:coro sigma}.
\end{proof}

\section{Proof of Theorem~\ref{w1p dorronsoro}}\label{sec:w1 proof}

We shall prove here the following theorem.

\begin{theorem}\label{thM:doro more refined}
Fix $p\in [2,\infty)$ and suppose that $(Y,\|\cdot\|_Y)$ is a UMD Banach space of cotype $p$. Then every smooth $f: \R^n\to Y$ satisfies
 \begin{multline}\label{eq:doro refined}
\bigg(\iiint_{\R^n\times B^n\times (0,\infty)}\frac{\left\|f^x(uy)-P_u^1f^x(uy)\right\|_Y^{p}}{V_n  u^{p+1}}\dd x  \dd y\dd u\bigg)^{\frac{1}{p}}
\\\lesssim \sqrt{pn} C_p(Y)\beta_p(Y)^{7}\log(\beta_p(Y)n) \sum_{j=1}^n \left(\int_{\R^n}\Big\|\frac{\partial f}{\partial x_j}(x)\Big\|_Y^{p}\dd x\right)^{\frac{1}{p}}.
\end{multline}
\end{theorem}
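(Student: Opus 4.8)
The plan is to obtain Theorem~\ref{thM:doro more refined} from Corollary~\ref{cor:combine bessel potential with legendre} by complex interpolation, in the spirit of Dorronsoro's treatment~\cite{Do} of the endpoint Sobolev exponent. For $t\in(0,\infty)$ and measurable $f:\R^n\to Y$ I will write
\[
N_t(f)\eqdef\Big(\frac{1}{V_n}\iiint_{\R^n\times B^n\times(0,\infty)}\frac{\|f^x(uy)-P^1_uf^x(uy)\|_Y^p}{u^{pt+1}}\,\dd x\,\dd y\,\dd u\Big)^{\frac1p},
\]
so that the left-hand side of~\eqref{eq:doro refined} is $N_1(f)$, and I will introduce the linear operator $S_tf(x,y,u)\eqdef u^{-t-\frac1p}\big(f^x(uy)-P^1_uf^x(uy)\big)$, for which $N_t(f)=V_n^{-1/p}\|S_tf\|_{L_p(\R^n\times B^n\times(0,\infty),Y)}$. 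By a density argument it will be enough to treat $f$ smooth and compactly supported; for such $f$ one has $\|f\|_{H_{t,p}(\R^n,Y)}<\infty$ for all $t>0$ and $N_t(f)<\infty$ for all $t\in(0,2)$ (near $u=0$ the integrand is $O(u^{2p})/u^{pt+1}$ by the second-order Taylor expansion of $f$, and for large $u$ it decays polynomially since $f$ is bounded and compactly supported while $P^0_uf^x$ and $T_uf^x$ decay). Corollary~\ref{cor:combine bessel potential with legendre} supplies the two endpoint bounds I will interpolate: $N_s(f)\lesssim\frac{\sqrt{pn}\,C_p(Y)\beta_p(Y)^4}{s(1-s)}\|f\|_{H_{s,p}(\R^n,Y)}$ for $s\in(0,1)$, and $N_\sigma(f)\lesssim\frac{\sqrt p\,n^{3/2}C_p(Y)\beta_p(Y)^6}{(\sigma-1)(2-\sigma)}\|f\|_{H_{\sigma,p}(\R^n,Y)}$ for $\sigma\in(1,2)$.

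Next I would fix $\sigma=\tfrac32$, introduce a parameter $\delta\in(0,\tfrac12]$ (to be chosen at the end), set $s=1-\delta$, and put $\theta\eqdef\frac{1-s}{\sigma-s}=\frac{\delta}{\delta+\frac12}\le2\delta$, so that $(1-\theta)s+\theta\sigma=1$. I would then consider the analytic family $\{T_z\}_{z\in\overline S}$, $S=\{0<\Re z<1\}$, defined on smooth compactly supported functions by $T_zf\eqdef S_{(1-z)s+z\sigma}f=u^{-z(\sigma-s)}S_sf$, so that $T_\theta=S_1$. Because the $z$-dependence enters only through the scalar factor $u^{-z(\sigma-s)}$, whose modulus $u^{-\Re z(\sigma-s)}$ is independent of $\Im z$, the map $z\mapsto T_zf$ is holomorphic on $S$ into $L_p(\R^n\times B^n\times(0,\infty),Y)$, continuous up to $\overline S$, and uniformly bounded on each vertical line, so $\{T_z\}$ is admissible for Stein's interpolation theorem, with $\|T_{ib}\|_{H_{s,p}(\R^n,Y)\to L_p}\lesssim M_0\eqdef V_n^{1/p}\frac{\sqrt{pn}\,C_p(Y)\beta_p(Y)^4}{\delta}$ and $\|T_{1+ib}\|_{H_{3/2,p}(\R^n,Y)\to L_p}\lesssim M_1\eqdef V_n^{1/p}\sqrt p\,n^{3/2}C_p(Y)\beta_p(Y)^6$ uniformly in $b$, by the two endpoint bounds (using $s(1-s)\ge\tfrac\delta2$). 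Stein's interpolation theorem for analytic families between interpolation couples (see~\cite{BeLo}), with domain couple $(H_{s,p}(\R^n,Y),H_{3/2,p}(\R^n,Y))$ and range couple $(L_p,L_p)$, then gives $V_n^{1/p}N_1(f)=\|T_\theta f\|_{L_p}\lesssim M_0^{1-\theta}M_1^{\theta}\,\|f\|_{[H_{s,p}(\R^n,Y),H_{3/2,p}(\R^n,Y)]_\theta}$, and Lemma~\ref{lem:interpolation bessel} bounds the last factor by $\beta_p(Y)e^{\frac{\pi(\sigma-s)}{4}\sqrt{\theta(1-\theta)}}\|f\|_{H_{1,p}(\R^n,Y)}\lesssim\beta_p(Y)\|f\|_{H_{1,p}(\R^n,Y)}$, since $\sigma-s\le1$ and $\theta(1-\theta)\le\tfrac14$.

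To finish, I would book-keep constants and optimize $\delta$. Cancelling $V_n^{1/p}$, one has $M_0^{1-\theta}M_1^{\theta}\asymp V_n^{1/p}\delta^{-(1-\theta)}\sqrt{pn}\,n^{\theta}C_p(Y)\beta_p(Y)^{4}\beta_p(Y)^{2\theta}$, and the choice $\delta\eqdef\big(2\log(e\beta_p(Y)^2n)\big)^{-1}$ makes $\theta\le\big(\log(e\beta_p(Y)^2n)\big)^{-1}$, hence $n^{\theta}\le e$, $\beta_p(Y)^{2\theta}\le e$ (as $\log(e\beta_p(Y)^2n)\ge2\log\beta_p(Y)$), and $\delta^{-(1-\theta)}\le\delta^{-1}\lesssim\log(\beta_p(Y)n)$. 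Thus $N_1(f)\lesssim\sqrt{pn}\,C_p(Y)\beta_p(Y)^5\log(\beta_p(Y)n)\,\|f\|_{H_{1,p}(\R^n,Y)}$, and the first assertion of Lemma~\ref{lem:Hilbert}, namely $\|f\|_{H_{1,p}(\R^n,Y)}\le\beta_p(Y)^2\sum_{j=1}^n\|\partial f/\partial x_j\|_{L_p(\R^n,Y)}$, yields exactly~\eqref{eq:doro refined}.

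The hard part, conceptually, is the trade-off in the last step: to reach the boundary exponent $t=1$ one must keep $s=1-\delta$ away from $1$ to control the $\frac1{s(1-s)}$ blow-up in Corollary~\ref{cor:combine bessel potential with legendre}, while at the same time one wants the interpolation parameter $\theta$ close to $0$ in order to inherit the favourable dimensional dependence $\sqrt{pn}$ of the left endpoint rather than the worse $\sqrt p\,n^{3/2}$ of the right endpoint; balancing these two competing demands forces $\delta\asymp1/\log(\beta_p(Y)n)$ and is exactly what produces the logarithmic factor in~\eqref{eq:doro refined}. The only real technical nuisance is checking that $\{T_z\}$ is an admissible analytic family for the dense class --- finiteness of $N_t(f)$ for $t$ in a neighbourhood of $[s,\sigma]$ and holomorphy of $z\mapsto T_zf$ as an $L_p(\R^n\times B^n\times(0,\infty),Y)$-valued map --- which is routine from the $C^2$-regularity and compact support of $f$, plus the density step to pass to general smooth $f$ with $\sum_j\|\partial f/\partial x_j\|_{L_p(\R^n,Y)}<\infty$. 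All of the UMD-specific analysis has already been absorbed into Corollary~\ref{cor:combine bessel potential with legendre}, Lemma~\ref{lem:interpolation bessel} and Lemma~\ref{lem:Hilbert} (whose proofs rest on boundedness of the Hilbert transform, of imaginary powers of the Laplacian, and of the Littlewood--Paley square function on $L_p(\R^n,Y)$), so no further Banach-space geometry is needed here.
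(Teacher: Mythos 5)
Your proof is correct and follows essentially the same route as the paper: interpolating the two endpoint bounds of Corollary~\ref{cor:combine bessel potential with legendre} by Stein interpolation (your analytic family $T_z=u^{-z(\sigma-s)}S_s$ into unweighted $L_p$ is just the standard reformulation of the paper's fixed operator $\mathscr S$ between the weighted spaces $L_p(\nu_t,Y)$ with $\f_1=\f_s^{1-\theta}\f_\sigma^{\theta}$), then invoking Lemma~\ref{lem:interpolation bessel} and Lemma~\ref{lem:Hilbert}, with $\theta\asymp 1/\log(\beta_p(Y)n)$ producing the logarithm. The only (immaterial) deviation is that you fix $\sigma=3/2$ while the paper takes $\sigma=2-1/\log(\beta_p(Y)n)$; both choices give the stated constant $\sqrt{pn}\,C_p(Y)\beta_p(Y)^7\log(\beta_p(Y)n)$.
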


Note that Theorem~\ref{w1p dorronsoro} follows from Theorem~\ref{thM:doro more refined}. Indeed, if $(Y,\|\cdot\|_Y)$ is a UMD Banach space with $\beta=\beta(Y)$ then by Lemma~\ref{lem:cotype bound} there exists a universal constant $\kappa\in (0,\infty)$ such that if we set $p=\kappa\beta$ then $C_p(Y)\lesssim 1$ (in particular we necessarily have $\kappa\beta\ge 2$).
Moreover, by~\eqref{eq:betap beta 2} we have $\beta_p(Y)\lesssim \beta^2$. To deduce Theorem~\ref{w1p dorronsoro}, take $f:\R^n\to Y$ that is Lipschitz and compactly supported. By convolving $f$ with a smooth bump function whose support has small diameter we may also assume that $f$ is smooth. It now follows from Theorem~\ref{thM:doro more refined} that
 \begin{multline*}
\bigg(\iiint_{\R^n\times B^n\times (0,\infty)}\frac{\left\|f^x(uy)-P_u^1f^x(uy)\right\|_Y^{\kappa\beta}}{V_n  u^{\kappa\beta+1}}\dd x  \dd y\dd u\bigg)^{\frac{1}{\kappa\beta}}
\\ \lesssim \beta^{\frac{29}{2}}\sqrt{n}\log (\beta n) \sum_{j=1}^n \left(\int_{\R^n}\Big\|\frac{\partial f}{\partial x_j}(x)\Big\|_Y^{\kappa\beta}\dd x\right)^{\frac{1}{\kappa\beta}}\le \beta^{15}n^{\frac52}\big(\vol(\supp(f))\big)^{\frac{1}{\kappa\beta}}\|f\|_{\Lip},
\end{multline*}
where we used the fact that, due to~\eqref{eq:john posiiton}, for every $x\in \R^n$ and $j\in \n$ we have
$$
\Big\|\frac{\partial f}{\partial x_j}(x)\Big\|_Y=\lim_{t\to 0}\frac{\|f(x+te_j)-f(x)\|_Y}{|t|}\le \|f\|_{\Lip}\|e_j\|_X\stackrel{\eqref{eq:john posiiton}}{\le} \|f\|_{\Lip}\sqrt{n}.
$$

\begin{proof}[Proof of Theorem~\ref{thM:doro more refined}]For every $s\in (0,\infty)$ let $\nu_s$ be the measure on $(0,\infty)\times \R^n\times B^n$ whose density is given by
\begin{equation}\label{eq:def density nus}
\forall(u,x,y)\in (0,\infty)\times \R^n\times B^n,\qquad \f_s(u,x,y)\eqdef \frac{1}{V_n|u|^{ps+1}}.
\end{equation}
Define a linear operator $\mathscr{S}:H_{s,p}(\R^n,Y)\to L_{p}(\nu_s,Y)$ by setting for every $f\in H_{s,p}(\R^n,Y)$,
\begin{equation}\label{eq:def operator S}
\forall(u,x,y)\in (0,\infty)\times \R^n\times B^n,\qquad \mathscr{S}f(u,x,y)\eqdef f^x(uy)-P^1_uf^x(uy),
\end{equation}
where we recall the notations~\eqref{eq:def translate} and \eqref{eq:def P1} for $f^x$ and $P^1_u$, respectively. In what follows we let $M_s$ denote the norm of $\mathscr{S}$ as an operator from $H_{s,p}(\R^n,Y)$ to $L_{p}(\nu_s,Y)$, i.e.,
\begin{equation*}\label{eq:def Ms}
\forall\, s\in (0,\infty),\qquad M_s\eqdef  \|\mathscr S\|_{H_{s,p}(\R^n,Y)\to L_{p}(\nu_s,Y)}.
\end{equation*}

Suppose that $s,\sigma,\theta\in \R$ satisfy
\begin{equation*}\label{eq:s sigma theta assumptions}
(s,\sigma,\theta)\in (0,1)\times (1,2)\times (0,1)\qquad\mathrm{and}\qquad 1=(1-\theta) s+\theta\sigma.
\end{equation*}
Then $\f_1=\f_s^{1-\theta} \f_\sigma^{\theta}$, so by Stein's interpolation theorem~\cite[Theorem~2]{Ste56}  for every $f$ in the complex interpolation space $[H_{s,p}(\R^n,Y),H_{\sigma,p}(\R^n,Y)]_\theta$ we have
\begin{multline}\label{eq:interpolation ineq S}
\bigg(\iiint_{\R^n\times B^n\times (0,\infty)}\frac{\left\|f^x(uy)-P_u^1f^x(uy)\right\|_Y^{p}}{V_n  u^{p+1}}\dd x  \dd y\dd u\bigg)^{\frac{1}{p}}\\
\stackrel{\eqref{eq:def density nus}\wedge\eqref{eq:def operator S}}{=} \|\mathscr{S}f\|_{L_{p}(\nu_1,Y)} \le M_s^{1-\theta}M_\sigma^{\theta}\|f\|_{[H_{s,p}(\R^n,Y),H_{\sigma,p}(\R^n,Y)]_\theta}.
\end{multline}
We note that Stein's interpolation theorem is stated in~\cite{Ste56} for real-valued function spaces, but the standard proofs of this theorem (see also~\cite{SW58} or~\cite[Section~5.4]{BeLo})  work without additional effort for vector-valued spaces as well, which is what we are using here. Alternatively, the vector-valued setting is treated explicitly by  Calder\'on in~\cite[Section~13.6]{Cal64}. Every $f\in W_{1,p}(\R^n,Y)$ satisfies
\begin{equation}\label{eq:use comparisons of interpolation}
\|f\|_{[H_{s,p}(\R^n,Y),H_{\sigma,p}(\R^n,Y)]_\theta}\stackrel{(*)}{\lesssim} \beta_p(Y)\|f\|_{H_{1,p}(\R^n,Y)}\stackrel{(**)}{\lesssim} \beta_p(Y)^3\|f\|_{W_{1,p}(\R^n,Y)}.
\end{equation}
where in $(*)$ we used Lemma~\ref{lem:interpolation bessel} and in $(**)$ we used Lemma~\ref{lem:Hilbert}. Hence,
\begin{multline}\label{almost doro thm}
\bigg(\iiint_{\R^n\times B^n\times (0,\infty)}\frac{\left\|f^x(uy)-P_u^1f^x(uy)\right\|_Y^{p}}{V_n  u^{p+1}}\dd x  \dd y\dd u\bigg)^{\frac{1}{p}}
\\\stackrel{\eqref{eq:Wp1 def}\wedge \eqref{eq:interpolation ineq S}\wedge\eqref{eq:use comparisons of interpolation}}{\lesssim} \beta_p(Y)^3\bigg(\inf_{\stackrel{(s,\sigma,\theta)\in (0,1)\times (1,2)\times (0,1)}{(1-\theta)s+\theta \sigma=1}}M_s^{1-\theta}M_\sigma^{\theta}\bigg)\sum_{j=1}^n \left(\int_{\R^n}\Big\|\frac{\partial f}{\partial x_j}(x)\Big\|_Y^{p}\dd x\right)^{\frac{1}{p}}.
\end{multline}

Corollary~\ref{cor:combine bessel potential with legendre} asserts that
$$
\forall(s,\sigma)\in (0,1)\times (1,2), \qquad M_s\lesssim \frac{\sqrt{pn} C_p(Y)\beta_p(Y)^{4} }{s(1-s)}\qquad\mathrm{and}\qquad M_\sigma\lesssim   \frac{\sqrt{p}n^{\frac32}C_p(Y)\beta_p(Y)^{6}}{(\sigma-1)(2-\sigma)}.
$$
Hence,
\begin{align}\label{eq:optimize interpolation}
\inf_{\stackrel{(s,\sigma,\theta)\in (0,1)\times (1,2)\times (0,1)}{(1-\theta)s+\theta \sigma=1}}M_s^{1-\theta}M_\sigma^{\theta} \nonumber&\lesssim
\inf_{\stackrel{(s,\sigma,\theta)\in (0,1)\times (1,2)\times (0,1)}{(1-\theta)s+\theta \sigma=1}}\bigg(\frac{\sqrt{pn} C_p(Y)\beta_p(Y)^{4} }{s(1-s)}\bigg)^{1-\theta}\bigg(\frac{\sqrt{p}n^{\frac32}C_p(Y)\beta_p(Y)^{6}}{(\sigma-1)(2-\sigma)}\bigg)^\theta\\
&\lesssim \sqrt{pn} C_p(Y)\beta_p(Y)^{4}\log(\beta_p(Y)n),
\end{align}
where~\eqref{eq:optimize interpolation} arises by choosing $s=1-1/\log(\beta_p(Y)n)$ and $\sigma=2-1/\log(\beta_p(Y)n)$, in which case necessarily $\theta= 1/\log(\beta_p(Y)n)$. We note that these choices essentially yield the best possible estimate in~\eqref{eq:optimize interpolation} (up to constant factors), but one could also choose here, say, $s=1/2$ and $\sigma=3/2$, yielding a worse dependence on $n$ which is of lesser importance for our present purposes. A substitution of~\eqref{eq:optimize interpolation} into~\eqref{almost doro thm} yields the desired estimate~\eqref{eq:doro refined}.
\end{proof}

\subsection*{Acknowledgements} We are grateful to Charles Fefferman for allowing us to include the contents of Section~\ref{sec:upper bounds}. We also thank Mark Veraar for helpful pointers to the literature and we are grateful to the anonymous referee for a careful reading and detailed comments that improved the presentation.

Part of this work was completed during a visit of the three authors to the MSRI Quantitative Geometry program. Another part of this work was completed when A.~N. was visiting Universit\'e Pierre et Marie Curie, Paris, France.

\subsection*{Added in proof} In the forthcoming work~\cite{HN16}, Question~\ref{Q:UC} is resolved positively by showing that Theorem~\ref{thm:UAAP intro} holds true for  any uniformly convex target $Y$ (in which case the parameter $\beta$ is replaced by a quantity that depends on the modulus of uniform convexity of $Y$). Also, the dependence on $n$ that appears in~\eqref{eq:n4} is improved in~\cite{HN16}. This is achieved in~\cite{HN16} by following the vector-valued  Littlewood--Paley strategy that we introduced here to bound $r^{X\to Y}(\e)$, but while implementing it via a method that differs markedly from the argument that appears in the present work. Specifically, \cite{HN16} follows more closely Bourgain's original strategy~\cite{Bou87} for proving his discretization theorem, though with major  differences. In particular, the proof in~\cite{HN16}  even yields a new approach to Dorronsoro's influential classical work~\cite{Do} in the scalar-valued setting.

\bibliographystyle{abbrv}
\bibliography{Quant-UAAP-UMD}

\def\cprime{$'$}
\begin{thebibliography}{10}

\bibitem{Ald79}
D.~J. Aldous.
\newblock Unconditional bases and martingales in {$L_{p}(F)$}.
\newblock {\em Math. Proc. Cambridge Philos. Soc.}, 85(1):117--123, 1979.

\bibitem{AS14}
J.~Azzam and R.~Schul.
\newblock A quantitative metric differentiation theorem.
\newblock {\em Proc. Amer. Math. Soc.}, 142(4):1351--1357, 2014.

\bibitem{BB07}
R.~Ba{\~n}uelos and K.~Bogdan.
\newblock L\'evy processes and {F}ourier multipliers.
\newblock {\em J. Funct. Anal.}, 250(1):197--213, 2007.

\bibitem{BM:03}
R.~Ba{\~n}uelos and P.~J. M{\'e}ndez-Hern{\'a}ndez.
\newblock Space-time {B}rownian motion and the {B}eurling-{A}hlfors transform.
\newblock {\em Indiana Univ. Math. J.}, 52(4):981--990, 2003.

\bibitem{BGMN05}
F.~Barthe, O.~Gu{\'e}don, S.~Mendelson, and A.~Naor.
\newblock A probabilistic approach to the geometry of the {$l^n_p$}-ball.
\newblock {\em Ann. Probab.}, 33(2):480--513, 2005.

\bibitem{BaJoLi}
S.~Bates, W.~Johnson, J.~Lindenstrauss, D.~Preiss, and G.~Schechtman.
\newblock Affine approximation of {L}ipschitz functions and nonlinear
  quotients.
\newblock {\em Geometric and Functional Analysis}, 9:1092--1127, 1999.

\bibitem{Ben85}
Y.~Benyamini.
\newblock The uniform classification of {B}anach spaces.
\newblock In {\em Texas functional analysis seminar 1984--1985 ({A}ustin,
  {T}ex.)}, Longhorn Notes, pages 15--38. Univ. Texas Press, Austin, TX, 1985.

\bibitem{BL00}
Y.~Benyamini and J.~Lindenstrauss.
\newblock {\em Geometric nonlinear functional analysis. {V}ol. 1}, volume~48 of
  {\em American Mathematical Society Colloquium Publications}.
\newblock American Mathematical Society, Providence, RI, 2000.

\bibitem{BeLo}
J.~Bergh and J.~L{\"o}fstr{\"o}m.
\newblock {\em Interpolation spaces. {A}n introduction}.
\newblock Springer-Verlag, Berlin-New York, 1976.
\newblock Grundlehren der Mathematischen Wissenschaften, No. 223.

\bibitem{BG:94}
E.~Berkson and T.~A. Gillespie.
\newblock Spectral decompositions and harmonic analysis on {UMD} spaces.
\newblock {\em Studia Math.}, 112(1):13--49, 1994.

\bibitem{Bou83}
J.~Bourgain.
\newblock Some remarks on {B}anach spaces in which martingale difference
  sequences are unconditional.
\newblock {\em Ark. Mat.}, 21(2):163--168, 1983.

\bibitem{Bourgain:86}
J.~Bourgain.
\newblock Vector-valued singular integrals and the {$H^1$}-{BMO} duality.
\newblock In {\em Probability theory and harmonic analysis ({C}leveland,
  {O}hio, 1983)}, volume~98 of {\em Monogr. Textbooks Pure Appl. Math.}, pages
  1--19. Dekker, New York, 1986.

\bibitem{Bou87}
J.~Bourgain.
\newblock Remarks on the extension of {L}ipschitz maps defined on discrete sets
  and uniform homeomorphisms.
\newblock In {\em Geometrical aspects of functional analysis (1985/86)}, volume
  1267 of {\em Lecture Notes in Math.}, pages 157--167. Springer, Berlin, 1987.

\bibitem{Burkholder}
D.~Burkholder.
\newblock Martingales and singular integrals in {B}anach spaces.
\newblock In {\em Handbook of the geometry of {B}anach spaces}, volume~1, pages
  233--269. 2001.

\bibitem{Burkholder:83}
D.~L. Burkholder.
\newblock A geometric condition that implies the existence of certain singular
  integrals of {B}anach-space-valued functions.
\newblock In {\em Conference on harmonic analysis in honor of {A}ntoni
  {Z}ygmund, {V}ol. {I}, {II} ({C}hicago, {I}ll., 1981)}, Wadsworth Math. Ser.,
  pages 270--286. Wadsworth, Belmont, CA, 1983.

\bibitem{Bur84}
D.~L. Burkholder.
\newblock Boundary value problems and sharp inequalities for martingale
  transforms.
\newblock {\em Ann. Probab.}, 12(3):647--702, 1984.

\bibitem{Bu86}
D.~L. Burkholder.
\newblock Martingales and {F}ourier analysis in {B}anach spaces.
\newblock In {\em Probability and analysis ({V}arenna, 1985)}, volume 1206 of
  {\em Lecture Notes in Math.}, pages 61--108. Springer, Berlin, 1986.

\bibitem{Cal64}
A.-P. Calder{\'o}n.
\newblock Intermediate spaces and interpolation, the complex method.
\newblock {\em Studia Math.}, 24:113--190, 1964.

\bibitem{Che12}
J.~Cheeger.
\newblock Quantitative differentiation: a general formulation.
\newblock {\em Comm. Pure Appl. Math.}, 65(12):1641--1670, 2012.

\bibitem{CKN11}
J.~Cheeger, B.~Kleiner, and A.~Naor.
\newblock Compression bounds for {L}ipschitz maps from the {H}eisenberg group
  to {$L_1$}.
\newblock {\em Acta Math.}, 207(2):291--373, 2011.

\bibitem{CPSW}
P.~Cl{\'e}ment, B.~de~Pagter, F.~A. Sukochev, and H.~Witvliet.
\newblock Schauder decomposition and multiplier theorems.
\newblock {\em Studia Math.}, 138(2):135--163, 2000.

\bibitem{DS91}
G.~David and S.~Semmes.
\newblock Singular integrals and rectifiable sets in {${\bf R}^n$}: {B}eyond
  {L}ipschitz graphs.
\newblock {\em Ast\'erisque}, (193):152, 1991.

\bibitem{DS93}
G.~David and S.~Semmes.
\newblock {\em Analysis of and on uniformly rectifiable sets}, volume~38 of
  {\em Mathematical Surveys and Monographs}.
\newblock American Mathematical Society, Providence, RI, 1993.

\bibitem{DS00}
G.~David and S.~Semmes.
\newblock Regular mappings between dimensions.
\newblock {\em Publ. Mat.}, 44(2):369--417, 2000.

\bibitem{DGM}
L.~{Deleaval}, O.~{Gu{\'e}don}, and B.~{Maurey}.
\newblock Dimension free bounds for the {Hardy--Littlewood} maximal operator
  associated to convex sets.
\newblock Preprint, available at \url{http://arxiv.org/abs/1602.02015}, 2016.

\bibitem{Do}
J.~Dorronsoro.
\newblock A characterization of potential spaces.
\newblock {\em Proc. Amer. Math. Soc.}, 95(1):21--31, 1985.

\bibitem{EFW12}
A.~Eskin, D.~Fisher, and K.~Whyte.
\newblock Coarse differentiation of quasi-isometries {I}: {S}paces not
  quasi-isometric to {C}ayley graphs.
\newblock {\em Ann. of Math. (2)}, 176(1):221--260, 2012.

\bibitem{Fel71}
W.~Feller.
\newblock {\em An introduction to probability theory and its applications.
  {V}ol. {II}.}
\newblock Second edition. John Wiley \& Sons Inc., New York, 1971.

\bibitem{FP74}
T.~Figiel and G.~Pisier.
\newblock S\'eries al\'eatoires dans les espaces uniform\'ement convexes ou
  uniform\'ement lisses.
\newblock {\em C. R. Acad. Sci. Paris S\'er. A}, 279:611--614, 1974.

\bibitem{FigWoj}
T.~Figiel and P.~Wojtaszczyk.
\newblock Special bases in function spaces.
\newblock In {\em Handbook of the geometry of {B}anach spaces, {V}ol. {I}},
  pages 561--597. North-Holland, Amsterdam, 2001.

\bibitem{FG11}
O.~Friedland and O.~Gu{\'e}don.
\newblock Random embedding of {$\ell^n_p$} into {$\ell^N_r$}.
\newblock {\em Math. Ann.}, 350(4):953--972, 2011.

\bibitem{Garling:86}
D.~J.~H. Garling.
\newblock Brownian motion and {UMD}-spaces.
\newblock In {\em Probability and {B}anach spaces ({Z}aragoza, 1985)}, volume
  1221 of {\em Lecture Notes in Math.}, pages 36--49. Springer, Berlin, 1986.

\bibitem{Gar90}
D.~J.~H. Garling.
\newblock Random martingale transform inequalities.
\newblock In {\em Probability in {B}anach spaces 6 ({S}andbjerg, 1986)},
  volume~20 of {\em Progr. Probab.}, pages 101--119. Birkh\"auser Boston,
  Boston, MA, 1990.

\bibitem{Gei99}
S.~Geiss.
\newblock A counterexample concerning the relation between decoupling constants
  and {UMD}-constants.
\newblock {\em Trans. Amer. Math. Soc.}, 351(4):1355--1375, 1999.

\bibitem{GNS11}
O.~Giladi, A.~Naor, and G.~Schechtman.
\newblock Bourgain's discretization theorem.
\newblock {\em Ann. Fac. Sci. Toulouse Math. (6)}, 21(4):817--837, 2012.

\bibitem{Guerre}
S.~Guerre-Delabri{\`e}re.
\newblock Some remarks on complex powers of {$(-\Delta)$} and {UMD} spaces.
\newblock {\em Illinois J. Math.}, 35(3):401--407, 1991.

\bibitem{GV:79}
R.~F. Gundy and N.~T. Varopoulos.
\newblock Les transformations de {R}iesz et les int\'egrales stochastiques.
\newblock {\em C. R. Acad. Sci. Paris S\'er. A-B}, 289(1):A13--A16, 1979.

\bibitem{HM96}
Y.-S. Han and Y.~Meyer.
\newblock A characterization of {H}ilbert spaces and the vector-valued
  {L}ittlewood-{P}aley theorem.
\newblock {\em Methods Appl. Anal.}, 3(2):228--234, 1996.

\bibitem{HN16}
T.~Hyt\"onen and A.~Naor.
\newblock Heat flow and quantitative differentiation.
\newblock Preprint available at \url{https://arxiv.org/abs/1608.01915}, 2016.

\bibitem{HNVW}
T.~Hyt\"onen, J.~van Neerven, M.~Veraar, and L.~Weis.
\newblock Analysis in {B}anach spaces. {V}ol. {I}.
\newblock Monograph in preparation, draft available at
  \url{http://fa.its.tudelft.nl/~neerven/ABS/volume1_Hytonen_Neerven_Veraar_Weis.pdf},
  2015.

\bibitem{Hytonen:aspects}
T.~P. Hyt{\"o}nen.
\newblock Aspects of probabilistic {L}ittlewood-{P}aley theory in {B}anach
  spaces.
\newblock In {\em Banach spaces and their applications in analysis}, pages
  343--355. Walter de Gruyter, Berlin, 2007.

\bibitem{Hytonen:LPS}
T.~P. Hyt{\"o}nen.
\newblock Littlewood-{P}aley-{S}tein theory for semigroups in {UMD} spaces.
\newblock {\em Rev. Mat. Iberoam.}, 23(3):973--1009, 2007.

\bibitem{Hytonen:BA}
T.~P. Hyt{\"o}nen.
\newblock On the norm of the {B}eurling-{A}hlfors operator in several
  dimensions.
\newblock {\em Canad. Math. Bull.}, 54(1):113--125, 2011.

\bibitem{IM:Riesz}
T.~Iwaniec and G.~Martin.
\newblock Riesz transforms and related singular integrals.
\newblock {\em J. Reine Angew. Math.}, 473:25--57, 1996.

\bibitem{John}
F.~John.
\newblock Extremum problems with inequalities as subsidiary conditions.
\newblock In {\em Studies and essays presented to R. Courant on his 60th
  birthday}, pages 187--204. Interscience Publishers, Inc., 1948.

\bibitem{JLS96}
W.~B. Johnson, J.~Lindenstrauss, and G.~Schechtman.
\newblock Banach spaces determined by their uniform structures.
\newblock {\em Geom. Funct. Anal.}, 6(3):430--470, 1996.

\bibitem{JS82}
W.~B. Johnson and G.~Schechtman.
\newblock Embedding {$l^{m}_{p}$} into {$l^{n}_{1}$}.
\newblock {\em Acta Math.}, 149(1-2):71--85, 1982.

\bibitem{Jon89}
P.~W. Jones.
\newblock Square functions, {C}auchy integrals, analytic capacity, and harmonic
  measure.
\newblock In {\em Harmonic analysis and partial differential equations ({E}l
  {E}scorial, 1987)}, volume 1384 of {\em Lecture Notes in Math.}, pages
  24--68. Springer, Berlin, 1989.

\bibitem{Jon90}
P.~W. Jones.
\newblock Rectifiable sets and the traveling salesman problem.
\newblock {\em Invent. Math.}, 102(1):1--15, 1990.

\bibitem{Kah85}
J.-P. Kahane.
\newblock {\em Some random series of functions}, volume~5 of {\em Cambridge
  Studies in Advanced Mathematics}.
\newblock Cambridge University Press, Cambridge, second edition, 1985.

\bibitem{KS}
I.~Karatzas and S.~E. Shreve.
\newblock {\em Brownian motion and stochastic calculus}, volume 113 of {\em
  Graduate Texts in Mathematics}.
\newblock Springer-Verlag, New York, second edition, 1991.

\bibitem{Kwa72}
S.~Kwapie{\'n}.
\newblock Isomorphic characterizations of inner product spaces by orthogonal
  series with vector valued coefficients.
\newblock {\em Studia Math.}, 44:583--595, 1972.
\newblock Collection of articles honoring the completion by Antoni Zygmund of
  50 years of scientific activity, VI.

\bibitem{LT91}
M.~Ledoux and M.~Talagrand.
\newblock {\em Probability in {B}anach spaces}, volume~23 of {\em Ergebnisse
  der Mathematik und ihrer Grenzgebiete (3) [Results in Mathematics and Related
  Areas (3)]}.
\newblock Springer-Verlag, Berlin, 1991.
\newblock Isoperimetry and processes.

\bibitem{LR10}
J.~R. Lee and P.~Raghavendra.
\newblock Coarse differentiation and multi-flows in planar graphs.
\newblock {\em Discrete Comput. Geom.}, 43(2):346--362, 2010.

\bibitem{Li14}
S.~Li.
\newblock Coarse differentiation and quantitative nonembeddability for {C}arnot
  groups.
\newblock {\em J. Funct. Anal.}, 266(7):4616--4704, 2014.

\bibitem{Li15}
S.~Li.
\newblock Markov convexity and nonembeddability of the {H}eisenberg group.
\newblock To appear in Ann. Inst. Fourier. Available at
  \url{http://arxiv.org/abs/1404.6751}, 2014.

\bibitem{LN12}
S.~Li and A.~Naor.
\newblock Discretization and affine approximation in high dimensions.
\newblock {\em Israel J. Math.}, 197(1):107--129, 2013.

\bibitem{MP84}
M.~B. Marcus and G.~Pisier.
\newblock Characterizations of almost surely continuous {$p$}-stable random
  {F}ourier series and strongly stationary processes.
\newblock {\em Acta Math.}, 152(3-4):245--301, 1984.

\bibitem{MTX06}
T.~Mart{\'{\i}}nez, J.~L. Torrea, and Q.~Xu.
\newblock Vector-valued {L}ittlewood-{P}aley-{S}tein theory for semigroups.
\newblock {\em Adv. Math.}, 203(2):430--475, 2006.

\bibitem{Mat99}
J.~Matou{\v{s}}ek.
\newblock On embedding trees into uniformly convex {B}anach spaces.
\newblock {\em Israel J. Math.}, 114:221--237, 1999.

\bibitem{Mau75}
B.~Maurey.
\newblock Syst\`eme de {H}aar.
\newblock In {\em S\'eminaire {M}aurey-{S}chwartz 1974--1975: {E}spaces
  {L{$\sup{p}$}}, applications radonifiantes et g\'eom\'etrie des espaces de
  {B}anach, {E}xp. {N}os. {I} et {II}}, pages 26 pp. (erratum, p. 1). Centre
  Math., \'Ecole Polytech., Paris, 1975.

\bibitem{Mau03}
B.~Maurey.
\newblock Type, cotype and {$K$}-convexity.
\newblock In {\em Handbook of the geometry of {B}anach spaces, {V}ol.\ 2},
  pages 1299--1332. North-Holland, Amsterdam, 2003.

\bibitem{MP76}
B.~Maurey and G.~Pisier.
\newblock S\'eries de variables al\'eatoires vectorielles ind\'ependantes et
  propri\'et\'es g\'eom\'etriques des espaces de {B}anach.
\newblock {\em Studia Math.}, 58(1):45--90, 1976.

\bibitem{MN13}
M.~Mendel and A.~Naor.
\newblock Markov convexity and local rigidity of distorted metrics.
\newblock {\em J. Eur. Math. Soc. (JEMS)}, 15(1):287--337, 2013.

\bibitem{Nao98}
A.~Naor.
\newblock Geometric problems in non-linear functional analysis.
\newblock Master's thesis, Hebrew University, 1998.

\bibitem{Pis75-martingales}
G.~Pisier.
\newblock Martingales with values in uniformly convex spaces.
\newblock {\em Israel J. Math.}, 20(3-4):326--350, 1975.

\bibitem{Pis75}
G.~Pisier.
\newblock Un exemple concernant la super-r\'eflexivit\'e.
\newblock In {\em S\'eminaire {M}aurey-{S}chwartz 1974--1975: {E}spaces
  {$L^{p}$}\ applications radonifiantes et g\'eom\'etrie des espaces de
  {B}anach, {A}nnexe {N}o. 2}, page~12. Centre Math. \'Ecole Polytech., Paris,
  1975.

\bibitem{Pis83}
G.~Pisier.
\newblock On the dimension of the {$l^{n}_{p}$}-subspaces of {B}anach spaces,
  for {$1\leq p<2$}.
\newblock {\em Trans. Amer. Math. Soc.}, 276(1):201--211, 1983.

\bibitem{Qiu}
Y.~Qiu.
\newblock On the {UMD} constants for a class of iterated {$L_p(L_q)$} spaces.
\newblock {\em J. Funct. Anal.}, 263(8):2409--2429, 2012.

\bibitem{Rota}
G.-C. Rota.
\newblock An ``{A}lternierende {V}erfahren'' for general positive operators.
\newblock {\em Bull. Amer. Math. Soc.}, 68:95--102, 1962.

\bibitem{Ste56}
E.~M. Stein.
\newblock Interpolation of linear operators.
\newblock {\em Trans. Amer. Math. Soc.}, 83:482--492, 1956.

\bibitem{Ste70-singular}
E.~M. Stein.
\newblock {\em Singular integrals and differentiability properties of
  functions}.
\newblock Princeton Mathematical Series, No. 30. Princeton University Press,
  Princeton, N.J., 1970.

\bibitem{Ste70}
E.~M. Stein.
\newblock {\em Topics in harmonic analysis related to the {L}ittlewood-{P}aley
  theory.}
\newblock Annals of Mathematics Studies, No. 63. Princeton University Press,
  Princeton, N.J., 1970.

\bibitem{SW58}
E.~M. Stein and G.~Weiss.
\newblock Interpolation of operators with change of measures.
\newblock {\em Trans. Amer. Math. Soc.}, 87:159--172, 1958.

\bibitem{Triebel2}
H.~Triebel.
\newblock {\em Theory of function spaces. {II}}, volume~84 of {\em Monographs
  in Mathematics}.
\newblock Birkh\"auser Verlag, Basel, 1992.

\bibitem{vNee07}
J.~M. A.~M. van Neerven.
\newblock Stochastic evolution equations.
\newblock Lecture notes of the Internet Seminar 2007/08 (version May 07, 2010).
  Available
  at~\url{http://fa.its.tudelft.nl/~neerven/publications/papers/ISEM.pdf},
  2010.

\bibitem{NW07}
J.~M. A.~M. van Neerven, M.~C. Veraar, and L.~Weis.
\newblock Stochastic integration in {UMD} {B}anach spaces.
\newblock {\em Ann. Probab.}, 35(4):1438--1478, 2007.

\bibitem{NW05}
J.~M. A.~M. van Neerven and L.~Weis.
\newblock Stochastic integration of functions with values in a {B}anach space.
\newblock {\em Studia Math.}, 166(2):131--170, 2005.

\bibitem{Veraar:13}
M.~{Veraar}.
\newblock {Embedding results for $\gamma$-spaces.}
\newblock In {\em {Recent trends in analysis. Proceedings of the conference in
  honor of Nikolai Nikolski on the occasion of his 70th birthday, Bordeaux,
  France, August 31 -- September 2, 2011}}, pages 209--219. Bucharest: The
  Theta Foundation, 2013.

\end{thebibliography}


\begin{dajauthors}

\begin{authorinfo}[tuomas]

Tuomas Hyt\"{o}nen\\
University of Helsinki\\
 Helsinki, Finland\\
tuomas\imagedot{}hytonen\imageat{}helsinki\imagedot{}fi\\
  \url{http://wiki.helsinki.fi/pages/viewpage.action?pageId=31427518}
\end{authorinfo}
\begin{authorinfo}[sean]
Sean Li\\
University of Chicago, Chicago IL, USA\\
seanli\imagedot{}math\imageat{}uchicago\imagedot{}edu\\
  \url{https://sites.google.com/site/seanlimath/}
\end{authorinfo}
\begin{authorinfo}[assaf]
  Assaf Naor\\
 Princeton University\\
  Princeton NJ, USA\\
  naor\imageat{}math\imagedot{}princeton\imagedot{}edu\\
  \url{https://web.math.princeton.edu/~naor/}
\end{authorinfo}
\end{dajauthors}

\end{document}
